\newcommand{\isom}{\ \cong\ }
\newcommand{\Sets}{\mathrm{Sets}}
\DeclareMathOperator{\rk}{rk}
\newcommand{\leftexp}[2]{{\vphantom{#2}}^{#1}{#2}}
\newcommand{\incl}[1][r]{\ar@<-0.2pc>@{^(-}[#1] \ar@<+0.2pc>@{-}[#1]}
\newcommand{\ZZ}{\mathcal{Z}}
\newcommand{\UU}{\mathcal{U}}
\newcommand{\XX}{{\mathfrak{X}}}
\newcommand{\weight}{\mathrm{weight}}
\newcommand{\Card}{\mathrm{Card}}
\renewcommand{\AA}{\mathcal{A}}
\newcommand{\OO}{\mathcal{O}}
\newcommand{\sO}{\mathcal{O}}
\newcommand{\HH}{\mathcal{H}}
\newcommand{\HHm}{\mathcal{H}^{\mathrm{main}}}
\newcommand{\BB}{\mathcal{B}}
\newcommand{\OH}{\widehat{\OO}_{\HH,[X]}}
\newcommand{\NN}{\mathbb{N}}
\newcommand{\Coker}{{\mathrm{Coker}\,}}
\newcommand{\Ach}{\widehat{A}}
\newcommand{\Shut}{{\widehat{S}}}
\newcommand{\Rch}{\widehat{R}}
\newcommand{\Gm}{\mathbb{G}_m}
\newcommand{\Aff}{{\mathbb{A}}_{k}^{1}}
\newcommand{\LL}{\mathcal{L}}
\newcommand{\RR}{\mathcal{R}}
\newcommand{\PP}{\mathbb{P}}
\newcommand{\ZZZ}{\mathbb{Z}}
\newcommand{\AAA}{\mathbb{A}}
\newcommand{\mm}{\mathfrak{m}}
\newcommand{\ohne}{{\ \setminus \ }}
\newcommand{\Sym}{\mathrm{Sym}}
\newcommand{\Irr}{\mathrm{Irr}}
\newcommand{\Tor}{\mathrm{Tor}}
\DeclareMathOperator{\into}{{\hookrightarrow}}
\DeclareMathOperator{\Art}{{Art}}
\newcommand{\Noeth}{{\widehat \Art}}
\DeclareMathOperator{\ob}{{ob}}
\DeclareMathOperator{\id}{{id}}
\newcommand{\Sing}{\mathrm{Sing}}
\newcommand{\Hom}{\mathrm{Hom}}
\newcommand{\Ext}{\mathrm{Ext}}
\newcommand{\Ker}{\mathrm{Ker}}
\renewcommand{\Im}{\mathrm{Im}}
\newcommand{\Aut}{\mathrm{Aut}}
\newcommand{\Stab}{\mathrm{Stab}}
\newcommand{\Hilb}{\mathrm{Hilb}}
\newcommand{\rg}{\mathrm{rk}}
\newcommand{\Spec}{\mathrm{Spec}}
\renewcommand{\to}[1][]{\xrightarrow{\ #1\ }}
\newcommand{\gothq}{{\mathfrak{q}}}
\newcommand{\sss}{{\mathfrak{s}}}
\newcommand{\vv}{{\mathfrak{v}}}
\newcommand{\gotho}{{\mathfrak{o}}}
\newtheoremstyle{citing}
  {}
  {}
  {\itshape}
  {}
  {\bfseries}
  {\textbf{.}}
  {.5em}
  {\thmnote{#3}}
\theoremstyle{plain}
\newtheorem{theorem}{Theorem}[section]
\newtheorem{lemma}[theorem]{Lemma}
\newtheorem{proposition}[theorem]{Proposition}
\newtheorem{corollary}[theorem]{Corollary}
\newtheorem{hypo}[theorem]{Hypothesis}
\newtheorem{algo}[theorem]{Algorithm}
\theoremstyle{definition}
\newtheorem{definition}[theorem]{Definition}
\newtheorem{notation}[theorem]{Notation}
\theoremstyle{remark}
\newtheorem{remark}[theorem]{Remark}
\theoremstyle{citing}
\newtheorem*{custom}{}}
\begin{document}

\title[Invariant deformation theory]{Invariant deformation theory of affine schemes with reductive group action} 

\author{Christian Lehn}
\address{Christian Lehn\\Institut f\"ur Algebraische Geometrie\\Gottfried Wilhelm Leibniz Universit\"at Hannover\\ 
Welfengarten 1\\30167 Hannover\\Germany}
\email{lehn@math.uni-hannover.de}

\author{Ronan Terpereau}
\address{Ronan Terpereau\\Institut f\"ur Mathematik\\ Johannes Gutenberg-Universit\"at Mainz\\
Staudingerweg 9\\55099 Mainz\\Germany}
\email{rterpere@uni-mainz.de}

\begin{abstract}
We develop an invariant deformation theory, in a form accessible to practice, for affine schemes $W$ equipped with an action of a reductive algebraic group $G$. Given the defining equations of a $G$-invariant subscheme $X \subset W$, we device an algorithm to compute the universal deformation of $X$ in terms of generators and relations up to a given order. In many situations, our algorithm even computes an algebraization of the universal deformation. As an application, we determine new families of examples of the invariant Hilbert scheme of Alexeev and Brion, where $G$ is a classical group acting on a classical representation, and we describe their singularities.
\end{abstract}

\maketitle

\enlargethispage*{2\baselineskip}

\setcounter{tocdepth}{1}

\tableofcontents

\thispagestyle{empty}

\section{Introduction}\label{sec intro}

Let $k$ be a fixed algebraically closed field of characteristic zero. Let us fix a reductive algebraic group $G$, an affine $G$-scheme $W$ of finite type over $k$, and a Hilbert function $h: \Irr(G) \to \NN$ which assigns to every irreducible representation of $G$ a nonnegative integer. We denote by $\HH:=\Hilb_{h}^G(W)$ the invariant Hilbert scheme of Alexeev and Brion \cite{AB} corresponding to the triple $(G,W,h)$; see section \ref{sec IHS} for more details.

This is a quite thrilling and somewhat mysterious object, and there is a large number of articles dedicated to its study. It has rendered services to the classification theory of spherical varieties; see \cite[\S4]{Br} for an overview and further references. Moreover, in many cases the invariant Hilbert scheme furnishes a canonical candidate for a resolution of singularities of the categorical quotient $W/\!/G=\Spec(k[W]^G)$. Indeed, if we take $h=h_0$ the Hilbert function of the general fibers of the quotient morphism $W \to W/\!/G$ (see section \ref{sec IHS} for the precise definition of $h_0$), then we have the so-called \textit{Hilbert-Chow morphism} $\gamma:\HH\to W/\!/G$, which is a projective morphism that maps a unique irreducible component $\HHm$ of $\HH$, the so-called \textit{main component}, birationally to $W/\!/G$. Examples where $\gamma$ is a resolution can be found in \cite{IN1,IN2,BKR,LS} for finite groups, and in \cite{JaRe,Tanja2,Ter1,Ter2} for classical groups.

On the other side, it is difficult to get control about the invariant Hilbert scheme in a hands-on way. It has been described only in some very special situations where $\HH$ was generally first shown to be smooth by some ad hoc arguments. Examples where $h=h_0$ can be found in the references mentioned above, and examples where $h$ takes values in $\{0,1\}$ can be found in \cite{J,BC08,PaBart,CF}. However, when $\HH$ is singular, explicit descriptions of examples as well as a general strategy to describe the singularities were missing so far.

The goal of this article is to describe an algorithm which provides Zariski-local equations of $\HH$ at an arbitrary point; see section \ref{section algorithm} and in particular Algorithm \ref{algo main}. This is, in some sense, the strongest form of information that one can have about a scheme. To achieve this, we developed an invariant deformation theory in a form accessible to practice; see section \ref{sec defo}. Our algorithm is completely general and can be applied to any point $[X] \in \HH$ as soon as there is an action of the multiplicative group $\Gm$ on $W$ by $G$-equivariant automorphisms with strictly positive weights on $k[W]$ and on the cotangent space $(T_{[X]}\HH)^\vee$; see Hypothesis \ref{hypo weights}.

As an illustration, we apply our algorithm in three situations: 
\begin{enumerate}
\item \label{cas1}$G=GL_3$ acting on $W=(k^3)^{\oplus n_1}\oplus (k^{3*})^{\oplus n_2}$, which is the sum of $n_1$ copies of the defining representation, and $n_2$ copies of its dual (section \ref{casGL3});
\item \label{cas2} $G=SO_3$ acting on $W=(k^3)^{\oplus 3}$ (section \ref{HclassSOngeneralset}); and
\item \label{cas3} $G=O_3$ acting on $W=(k^3)^{\oplus n}$ (section \ref{casO3}).
\end{enumerate}

\begin{custom}[Theorems \ref{casSO3} and \ref{casGL3}]
Let $G$ and $W$ be as in situation \ref{cas1} or \ref{cas2}, and let $h=h_0$ be the Hilbert function of the general fibers of the quotient morphism $\nu:W \to W/\!/G$. 
Then the main component of the invariant Hilbert scheme is smooth, and thus the Hilbert-Chow morphism $\gamma:\HHm\to W/\!/G$ is a resolution of singularities. 
In both cases, $\HH$ is reduced, connected, and the union of two irreducible components $\HHm \cup  \HH'$, where $\HH'$ is smooth in situation \ref{cas1} but singular in situation \ref{cas2}. 
\end{custom}

Moreover, we give a description of the special fiber $\gamma^{-1}(\nu(0))$, both as an abstract scheme as well as in terms of the $G$-stable ideals that it parametrizes. Even in the case where we do not succeed to describe the invariant Hilbert scheme completely, our algorithm proves helpful. 
We obtain the following information in situation \ref{cas3}:

\begin{custom}[Theorem \ref{casO3}]
Let $G$ and $W$ be as in situation \ref{cas3}, and let $h=h_0$ be the Hilbert function of the general fibers of the quotient morphism $W \to W/\!/G$. 
Then the invariant Hilbert scheme $\HH$ is connected and has at least two irreducible components. 
\end{custom}

We give more detailed formulations of these results in the sections \ref{HclassSOngeneral} and \ref{two others}.
These examples have entered the focus by the work \cite{Terp} because they were the first examples with classical groups acting on classical representations where the invariant Hilbert scheme was known to be singular. However, this was shown simply by calculating the dimension of the tangent space and comparing it to the dimension of $W/\!/G$. Thus, no further properties of $\HH$ such as reducibility, or the smoothness of the main component $\HHm$, were given. Our results also show that the geometry of the invariant Hilbert scheme can be very diverse. Thus, it is important to have many more examples, and our algorithm gives a powerful tool to calculate them.

Let us explain how we obtain these results. The strategy is to \emph{localize} geometric properties of $\HH$ in special points and then to compute local equations at these points via Algorithm \ref{algo main}. The question is: which points in $\HH$ contain most information and how do we find them?
In situations \ref{cas1} to \ref{cas3}, there is each time an algebraic subgroup $H \subset \Aut^G(W)$ acting on $\HH$. Imagine that we want to study the singular locus $\HH^\Sing\subset \HH$ for example. Clearly,  $\HH^\Sing$ is closed and $H$-stable. As $\gamma:\HH\to W/\!/G$ is projective, also $\gamma(\HH^\Sing)$  is closed and $H$-stable, hence it contains a closed $H$-orbit. In situations \ref{cas1} to \ref{cas3}, there is only one closed orbit for the $H$-action on $W/\!/G$, and it consists in a single point, say $\{o\}$. The crucial argument is that now Borel's fixed point theorem implies the existence of fixed points in the projective scheme $\gamma^{-1}(o)\cap \HH^\Sing$ for any Borel subgroup $B \subset H$. This technique of localizing Borel fixed points of $\HH$ obviously applies to many other geometric properties such as non-reducedness or reducibility. Moreover, these fixed points can be calculated by hand in many examples. In our case, there are two fixed points each in situations \ref{cas2} and \ref{cas3}, and a single one in situation \ref{cas1}. This stipulates the following general strategy: 

\begin{enumerate}
\item\label{find fp} Find the fixed points for the $B$-action on $\HH$.
\item\label{connect fp} Try to connect these fixed points to the main component $\HHm$ in order to show connectedness.
\item \label{tgt} Determine the tangent space of $\HH$ at these fixed points.
\item \label{step33} For each of these fixed points, look for subgroups of $B$ isomorphic to $\Gm$ and satisfying Hypothesis \ref{hypo weights} . (There are such groups for all fixed points in situations \ref{cas1} to \ref{cas3}, except for one fixed point in situation \ref{cas3}.) 
\item\label{calculate loc ring at fp} For every fixed point satisfying Hypothesis \ref{hypo weights}, calculate an affine open neighborhood in $\HH$ thanks to Algorithm \ref{algo main}.
\end{enumerate}

Steps \ref{find fp}, \ref{connect fp}, and \ref{step33} are done by hand or by ad hoc methods for each example, Steps \ref{tgt} and \ref{calculate loc ring at fp} are done with a computer algebra system. A documented MACAULAY2 file containing all the computations in situations \ref{cas1} to \ref{cas3} is available on the webpage of the authors.

Equivariant deformation theory has been studied by Rim in \cite{R}. This includes the case where $G$ acts non-trivially on the base space of the versal deformation. The difference to our work is that firstly we study embedded and not abstract deformations, and secondly here we study invariant deformation theory, that is, deformations where $G$ acts trivially on the base space. Moreover, in our case due to the existence of the invariant Hilbert scheme, it is unnecessary to assume the existence of a versal deformation of the underlying variety without group action (and in fact a versal deformation does not exist in our examples).

Let us also mention that, when $G$ is a finite group and $X \subset W$ is a finite subscheme, our algorithm seems to be folklore. In particular, if $G$ is trivial, then the invariant Hilbert scheme $\HH$ is a punctual Hilbert scheme; the study of this latter via deformation theory can be found in the first chapters of \cite{Stevens} for instance. An example where $G \subset SL_2$ is the binary tetrahedral group can be found in \cite[\S2]{LS}. However, even in the particular setting where $G$ is a (non-trivial) finite group, we did not find any explanation for the validity of the algorithm. That is why we include a full treatment there.

The text is structured as follows. In section \ref{sec generalities} we recall some basics about the invariant Hilbert scheme and the Reynolds operator. Section \ref{sec defo} is the heart of this paper. There we develop an invariant deformation theory, which we present in a constructive way. Our algorithm can be deduced rather immediately from the presentation of the theoretical results. We summarize it in a completely algorithmic fashion in section \ref{section algorithm}. In section \ref{subsec gm action} we add Hypothesis \ref{hypo weights} on the positivity of weights, and deduce its theoretical consequences. In particular, there we give the argument for the stop condition of our algorithm, which is formulated in an algorithmic way in section \ref{subsec stop}. Finally, sections \ref{HclassSOngeneral} and \ref{two others} present the applications in situations \ref{cas1} to \ref{cas3}.\\

\noindent \textbf{Acknowledgments.} We are grateful to Manfred Lehn for explaining the idea of the algorithm in the case where $G$ is a finite group acting on a finite scheme. We would like also to thank Michel Brion, Edoardo Sernesi, and the referee for their careful reading of this paper and their valuable comments.

The first-named author gratefully acknowledges the support of the DFG through the research grant Le 3093/1-1 and the kind hospitality of the IMJ, Paris.
While working on this project, the second-named author benefited from the support of the DFG via the SFB/TR 45 ``Periods, moduli spaces and arithmetic of algebraic varieties''.

\section{Some background}  \label{sec generalities}

The aim of this section is to provide the reader with some definitions and basic results concerning the invariant Hilbert scheme, constructed by Alexeev and Brion in \cite{AB}, and about the Reynolds operator. The survey \cite{Br} gives a detailed introduction to the invariant Hilbert schemes. 

All schemes we consider are supposed to be separated and of finite type over an algebraically closed field $k$ of characteristic zero. We refer to \cite{Bo} for the necessary background material on algebraic groups.

\subsection{Generalities on the invariant Hilbert scheme} \label{sec IHS}
Let $G$ be a reductive group, let $S$ be a scheme, let $\ZZ$ be a $G$-scheme, and let $\pi: \ZZ \to S$ be a $G$-invariant affine morphism of finite type. 
The sheaf $\AA:=\pi_{*} \OO_{\ZZ}$ is a finitely generated $\OO_S$-algebra, and so is the sheaf of invariants $\AA^G$ by the Hilbert-Nagata Theorem (see e.g. \cite[Theorem 1.24]{Br10}). Denote by $\Irr(G)$ the set of isomorphism classes of irreducible $G$-modules. For any $M \in \Irr(G)$, we consider the sheaf of covariants $\AA_{(M)}:=\Hom^{G}(M,\AA)$ on $S$. By \cite[Lemma 2.1]{Br10}, each $\AA_{(M)}$ is a finitely generated $\AA^G$-module. 

Consider the evaluation map $\AA_{(M)}\otimes M \to \AA$. According to \cite[\S2.3]{Br}, the direct sum of all this evaluations gives a decomposition of the sheaf $\AA$ as a $(\OO_S,G)$-module: 
\begin{equation} \label{eq1}
\AA \cong \bigoplus_{M \in \Irr(G)} \AA_{(M)} \otimes M.
\end{equation}
Hereby, the $(\OO_S,G)$-module structure on each piece $ \AA_{(M)} \otimes M$ is given as follows: $G$ acts only on the factor $M$, and the $\OO_S$-module structure is induced by that of $\AA_{(M)}$. 

If $\AA^G$ is a coherent $\OO_S$-module and $\pi$ is flat, then each $\OO_S$-module $\AA_{(M)}$ is flat by \eqref{eq1}. Recall that flatness is equivalent to local freeness for a finitely generated module over a Noetherian ring. We get that each $\OO_S$-module $\AA_{(M)}$ is locally free of finite rank, the latter being constant on each connected component of $S$. Suppose that this rank is constant on $S$; then the function 
\[
h_{\ZZ}:\Irr(G) \to \NN, \qquad M\mapsto \rk \AA_{(M)}
\]
is called  the \emph{Hilbert function} of the family $\ZZ \to S$.

\begin{definition}  \label{Hilbert functor}
Let $h:\Irr(G) \to \NN$ be a function, and let $W$ be an affine $G$-scheme.
The Hilbert functor $\underline{{\Hilb}_{h}^{G}(W)}$: ${Sch}^{op} \to \Sets$ is defined by 
$$S \mapsto \left\{ 
\begin{array}{c} 
\xymatrix{
\ZZ \ar@{.>}_{\pi}[dr]  \ar@{^{(}->}[r] &   S \times W \ar@{->}[d]^{pr_1}\\
                     &  S } \end{array}
\middle| 
\begin{array}{l} \ZZ \ \text{is a $G$-stable closed subscheme;}  \\
{\pi}\ \text{is a flat morphism; and}\\
 {\AA}_{(M)} \ \text{is locally free of rank $h(M)$ over } S  \end{array} 
\right\} .$$ 

An element $(\pi: \ZZ \to S) \in \underline{{\Hilb}_{h}^{G}(W)}(S)$ is called a \textit{flat family over $S$ of $G$-stable closed subschemes of $W$}.
\end{definition} 

By \cite[Theorem 2.11]{Br}, the Hilbert functor $\underline{{Hilb}_{h}^{G}(W)}$ is represented by a quasi-projective scheme $\HH={\Hilb}_{h}^{{G}}(W)$: the invariant Hilbert scheme associated with the affine $G$-scheme $W$ and the Hilbert function $h$. 
We denote by $\UU \subset W \times \HH$ the \textit{universal subscheme}, and we write $\pi:\UU\to\HH$ for the  \textit{universal family}, that is, the projection to the second factor. 

Let $\nu:W \to W/\!/G =\Spec(k[W]^G)$ be the quotient morphism, which is induced by the inclusion of algebras $k[W]^G \subset k[W]$. If the affine scheme $W$ is reduced and $W/\!/G$ is irreducible, then by \cite[Theorem 14.4]{Ei} the quotient morphism is flat over a non-empty open subset $U \subset W/\!/G$; the Hilbert function of the family $\nu_{| \nu^{-1}(U)}: \nu^{-1}(U) \to U$ is called \textit{Hilbert function of the general fibers of $\nu$} and is denoted by $h_0$. The next proposition will be useful in sections \ref{HclassSOngeneral} and \ref{two others}.

\begin{proposition} \emph{(\cite[Proposition 3.15]{Br})} \label{chow}
Let $W$ be a reduced, affine $G$-scheme such that the quotient $W/\!/G$ is irreducible, let $h_0$ be the Hilbert function of the general fibers of $\nu$, and let $\HH=\Hilb_{h_0}^G(W)$ be the corresponding Hilbert scheme.
Then there exists a projective morphism $\gamma:\HH \to W/\!/G$, called the \emph{Hilbert-Chow morphism}, such that the diagram
\begin{equation} \label{diag1}
        \xymatrix{
     \UU \ar[d]_(0.4){\pi} \ar[r]^(0.5){q} & W \ar@{->>}[d]^(0.4){\nu} \\
     \HH \ar[r]_(0.4){\gamma} & W/\!/G 
    }
\end{equation}
commutes, where $\UU \subset W \times \HH$ is the universal subscheme, and the morphisms from $\UU$ are the projections. 
Moreover, $\gamma$ is an isomorphism over the the biggest open subset $U\subset W/\!/G$  over which $\nu$ is flat.
\end{proposition}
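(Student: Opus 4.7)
My plan is to construct $\gamma$ via the universal property of $W/\!/G = \Spec(k[W]^G)$. The key combinatorial input is that $h_0$ takes the value $1$ on the trivial representation: since $W$ is reduced and $W/\!/G$ is irreducible, the flatness locus $U$ of $\nu$ is a non-empty open subset, and for $\xi \in U$ the fibre $\nu^{-1}(\xi)$ is a single closed $G$-orbit whose ring of invariants is $k$. Given an $S$-point of $\HH$ corresponding to a family $\pi: \ZZ \to S$ with $\ZZ \subset S \times W$, the decomposition \eqref{eq1} identifies the invariant subsheaf $\AA^G \subset \AA = \pi_*\OO_\ZZ$ with the trivial isotypic component $\AA_{(k)}$, which is locally free of rank $h_0(k) = 1$. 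Since $\AA^G$ is an $\OO_S$-subalgebra of this rank, the unit $\OO_S \to \AA^G$ is forced to be an isomorphism. Composing $k[W]^G \hookrightarrow k[W] \to \Gamma(S,\AA)$ with the projection onto $\Gamma(S,\AA^G) \cong \Gamma(S,\OO_S)$ yields a morphism $S \to W/\!/G$, functorial in $S$, and Yoneda gives $\gamma$. Commutativity of the diagram is built into the construction applied to the universal family.

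Over the flatness locus $U$, the restriction $\nu^{-1}(U) \to U$ is a flat $G$-invariant family with fibrewise Hilbert function $h_0$ by definition, so the universal property of $\HH$ produces a classifying morphism $\sigma: U \to \HH$ that is, by construction, a section of $\gamma$. To promote $\sigma$ to a two-sided inverse on $\gamma^{-1}(U)$, I would show that every $[X] \in \gamma^{-1}(U)$ satisfies $X = \nu^{-1}(\gamma([X]))$: commutativity of the diagram forces the inclusion of ideals $I_{\nu^{-1}(\gamma([X]))} \subseteq I_X$, and in each isotypic component the quotients $k[W]/I_X$ and $k[W]/I_{\nu^{-1}(\gamma([X]))}$ have the same dimension $h_0(M)$, so the surjection between them is an isomorphism and the two ideals coincide.

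The main obstacle is projectivity of $\gamma$. Quasi-projectivity of $\HH$ is given by the Alexeev-Brion construction, so it suffices to verify properness via the valuative criterion. Given a DVR $R$ with fraction field $K$, a $K$-family $\ZZ_K \subset \Spec(K) \times W$ corresponding to a $K$-point of $\HH$ together with an extension of its image under $\gamma$ to an $R$-point of $W/\!/G$, the natural extension candidate is the scheme-theoretic closure $\ZZ_R \subset \Spec(R) \times W$ of $\ZZ_K$. This $\ZZ_R$ is automatically $R$-flat (torsion-freeness is automatic over a DVR) and $G$-stable by linear reductivity, and $\Gamma(\ZZ_R,\OO_{\ZZ_R})^G = R$ by the extension assumption. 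The delicate step is to check that the special fibre still has Hilbert function $h_0$: this should follow piece-by-piece, because each $\AA_{(M)}$ on $\Spec(R)$ is a finitely generated $R$-flat module of generic rank $h_0(M)$, hence free of that rank, so the closed fibre inherits the correct multiplicities. Uniqueness of the extension is uniqueness of the scheme-theoretic closure, which together with separatedness of $\HH$ completes the argument.
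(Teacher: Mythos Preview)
The paper does not prove this proposition; it is quoted verbatim from \cite[Proposition 3.15]{Br} and used as a black box. So there is no argument in the paper to compare yours against. Your outline is essentially a correct reconstruction of a proof, but two steps deserve more care.

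First, your ``isomorphism over $U$'' argument is carried out only for $k$-points: you show that for $[X]\in\gamma^{-1}(U)$ one has $X=\nu^{-1}(\gamma([X]))$ by comparing isotypic multiplicities. That gives bijectivity on closed points, not an isomorphism of schemes. The fix is to run the same argument functorially: for any $T\to\gamma^{-1}(U)$ classifying $\ZZ\subset T\times W$, commutativity forces $\ZZ\subset T\times_{W/\!/G}W$, and since both are $T$-flat with fibrewise Hilbert function $h_0$ this closed immersion is an isomorphism. Hence $\gamma$ is a monomorphism over $U$, and together with the section $\sigma$ you are done.

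Second, in the valuative criterion you assert $\Gamma(\ZZ_R,\OO_{\ZZ_R})^G=R$ ``by the extension assumption'' without saying why. This is the crux of properness and needs an argument: since $G$ is reductive, taking invariants is exact, so the surjection $R\otimes k[W]\twoheadrightarrow A:=\Gamma(\ZZ_R,\OO_{\ZZ_R})$ yields a surjection $R\otimes k[W]^G\twoheadrightarrow A^G$. Flatness of $\ZZ_R$ makes $A^G$ torsion-free with $A^G\otimes_R K=K$, so $R\hookrightarrow A^G\hookrightarrow K$; the extension hypothesis says precisely that the image of $k[W]^G$ in $K$ lies in $R$, whence $A^G=R$. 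Only then do the $\AA_{(M)}$ become finite over $R$, so that your ``flat of generic rank $h_0(M)$ implies free of that rank'' step goes through. Without this, the closure $\ZZ_R$ could have $A^G$ strictly larger than $R$ (think of a point running off to infinity), and the special fibre need not have Hilbert function $h_0$.

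For what it is worth, Brion's own proof in the cited survey does not use the valuative criterion; projectivity is obtained structurally, by realizing $\HH$ inside a relative Grassmannian/Quot-type construction over $W/\!/G$. Your route is more elementary but requires the bookkeeping above.
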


\begin{definition}
In the setting of Proposition \ref{chow}, we define the main component $\HHm$ of $\HH$ as the Zariski closure of $\gamma^{-1}(U)$. 
It is an irreducible component of $\HH$ which is mapped birationally to the quotient $W/\!/G$ by the Hilbert-Chow morphism.
\end{definition}

\subsection{Fixed points for the action of a Borel subgroup} \label{subsec fix borel}

Let us fix an algebraic subgroup $H$ of the $G$-equivariant automorphism group 
$${\Aut}^G(W):=\{h \in \Aut(W) \mid \forall g \in G,\ h \circ g=g \circ h \}.$$ 
Then we have the following:  

\begin{proposition} \emph{(\cite[Proposition 3.10]{Br})} \label{groupaction}
With the above notation, the group $H$ acts on the invariant Hilbert scheme $\HH$ and on the universal subscheme $\UU$, and the universal family $\UU \to \HH$ is $H$-equivariant. Moreover, in the setting of Proposition \ref{chow}, the Hilbert-Chow morphism $\gamma:\HH \to W/\!/G$ is also $H$-equivariant.
\end{proposition}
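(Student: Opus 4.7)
The plan is to exploit the representability of the Hilbert functor and transfer the $H$-action on $W$ to $\HH$ via Yoneda. For each $h\in H$, I would construct a natural transformation $\alpha_h$ of $\underline{\Hilb_h^G(W)}$ as follows: given a flat family $\pi:\ZZ\hookrightarrow S\times W$ over $S$, the image $\ZZ':=(\id_S\times h)(\ZZ)$ is again a closed subscheme of $S\times W$. Because $h$ commutes with the $G$-action on $W$, the subscheme $\ZZ'$ remains $G$-stable, and because $\id_S\times h$ is an isomorphism of $S\times W$ over $S$, the resulting map $\ZZ'\to S$ stays flat, and the sheaves of covariants are transported isomorphically, so they remain locally free of the prescribed ranks $h(M)$. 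The assignments $h\mapsto\alpha_h$ are visibly compatible with composition and the identity, so Yoneda yields a morphism $H\times \HH\to \HH$ defining the desired $H$-action.

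Next, to lift the action to $\UU$, I would use that $\UU\subset \HH\times W$ is the family corresponding to $\id_{\HH}$. Applying $\alpha_h$ to $\id_{\HH}$ on the functor side gives $(\id_{\HH}\times h)(\UU)$; comparing with the geometric interpretation shows that the diagonal action of $H$ on $\HH\times W$ (via the constructed action on $\HH$ and the given action on $W$) preserves $\UU$. Equivalently, the universal subscheme is characterized, fiberwise, by $\UU_{[X]}=X$, and the action by $h$ sends $[X]\in\HH$ to $[h(X)]$, whose fiber is exactly the image of $\UU_{[X]}$ under $h$. The equivariance of $\pi:\UU\to\HH$ is then automatic, since $\pi$ is just the first projection restricted to the $H$-stable subscheme $\UU$.

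For the Hilbert-Chow morphism, I would argue as follows. The second projection $q:\UU\to W$ is $H$-equivariant because $\UU\subset\HH\times W$ is $H$-stable for the diagonal action. The quotient $\nu:W\to W/\!/G$ is $H$-equivariant because $H$ commutes with $G$, so the action of $H$ on $k[W]$ preserves the subalgebra $k[W]^G$ and thus descends to $W/\!/G$. From the commutative diagram \eqref{diag1} one then has $\nu\circ q=\gamma\circ\pi$, and both sides intertwine with the $H$-action on $W/\!/G$; since $\pi$ is surjective (even faithfully flat), this forces $\gamma$ to be $H$-equivariant as well.

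The main obstacle I anticipate is not a deep one but rather a careful bookkeeping exercise: one has to make sure that the abstract $H$-action on $\HH$ produced via Yoneda coincides, when paired with the action on $W$, with the action on $\HH\times W$ under which $\UU$ is literally stable as a closed subscheme. Once this compatibility is pinned down, the equivariance statements follow essentially formally from the commuting diagrams and the universal property.
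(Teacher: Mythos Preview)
The paper does not give its own proof of this proposition; it is simply quoted from \cite[Proposition 3.10]{Br}. Your argument is the standard one and is essentially correct: transport families along $\id_S\times h$, invoke representability, and read off equivariance of $\pi$ and $\gamma$ from the commutative square \eqref{diag1}.

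One small point worth tightening: as written, your construction produces for each $k$-point $h\in H$ an automorphism of the functor, hence of $\HH$, but this only gives an abstract group action of $H(k)$ on $\HH(k)$. To obtain an \emph{algebraic} action, i.e.\ a morphism of schemes $H\times\HH\to\HH$, you should run the same construction with $S$-valued points of $H$ (or, equivalently, pull the universal family back to $H\times\HH$ via the second projection, twist by the action map $H\times W\to W$, and apply the universal property once). This is exactly the argument in Brion's survey, and once you phrase it this way the compatibility you flag as the ``main obstacle'' is automatic.
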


Let us fix a Borel subgroup $B \subset H$. Recall that our strategy to determine the global structure of $\HH$, e.g. connected components, irreducible components, their dimension and singular locus etc, is based on the study of the fixed points of $\HH$ for the $B$-action, and was explained at the end of the introduction. The following result justifies the importance of these fixed points: 

\begin{lemma}  \label{closedpoints}
Let us assume that $W/\!/G$ has a unique closed $H$-orbit, and that this orbit is a point $\{o\}$.
Then each $H$-stable closed subset of $\HH$ contains a fixed point for the action of the Borel subgroup $B$. In particular, there is a one-to-one correspondence between closed $H$-orbits in $\HH$ and $B$-fixed points in $\HH$.
\end{lemma}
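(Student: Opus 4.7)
The plan is to combine the $H$-equivariance and properness of the Hilbert--Chow morphism $\gamma:\HH\to W/\!/G$ with Borel's fixed point theorem. First I would establish the existence of a $B$-fixed point in any non-empty closed $H$-stable subset $Z\subset\HH$. Since $\gamma$ is projective by Proposition \ref{chow} and $H$-equivariant by Proposition \ref{groupaction}, the image $\gamma(Z)$ is closed and $H$-stable in $W/\!/G$. The standard fact that a non-empty closed $H$-stable subset of a Noetherian scheme contains a closed $H$-orbit, combined with the hypothesis that $\{o\}$ is the unique closed $H$-orbit in $W/\!/G$, forces $o\in\gamma(Z)$. Hence $\gamma^{-1}(o)\cap Z$ is non-empty, $B$-stable, and projective as a closed subscheme of the projective fiber $\gamma^{-1}(o)$, so Borel's fixed point theorem produces a $B$-fixed point there, as required.

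For the bijection, I would exhibit mutually inverse maps. Sending a $B$-fixed point $[X]$ to its $H$-orbit is well defined: the stabilizer $\Stab_H([X])$ contains $B$, hence is a parabolic subgroup of $H$, so $H\cdot[X]\cong H/\Stab_H([X])$ is projective and the orbit map $H/\Stab_H([X])\to\HH$ is proper (its source is projective and its target is separated). Consequently $H\cdot[X]$ is closed in $\HH$. Conversely, every closed $H$-orbit $O$ in $\HH$ contains a $B$-fixed point by the first part applied to $Z=O$.

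The delicate point, which I expect to be the main obstacle, is the uniqueness of this $B$-fixed point in a closed orbit $O$. Given two $B$-fixed points $[X],[X']\in O$, I would write $[X']=h\cdot[X]$ for some $h\in H$ and set $P:=\Stab_H([X])$; then $B\subset P\cap hPh^{-1}$, so $h^{-1}Bh\subset P$. Invoking the conjugacy of Borel subgroups inside the parabolic $P$, there exists $q\in P$ with $q^{-1}h^{-1}Bhq=B$, and the self-normalizing property $N_H(B)=B$ then forces $hq\in B\subset P$, so that $h\in P$ and $[X']=[X]$. Together with the constructions of the preceding paragraph, this yields the claimed one-to-one correspondence between closed $H$-orbits in $\HH$ and $B$-fixed points in $\HH$.
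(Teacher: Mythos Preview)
Your argument is correct and the first part matches the paper's proof essentially verbatim: both use projectivity and $H$-equivariance of $\gamma$ to trap the closed $H$-stable set over $o$, then invoke Borel's fixed point theorem on the projective fiber.

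For the bijection, the paper is terser: it simply cites the standard fact that for every parabolic subgroup $Q\supset B$ of $H$, the projective homogeneous variety $H/Q$ has a unique $B$-fixed point, and leaves it at that. You instead supply a self-contained proof of this fact via conjugacy of Borel subgroups inside $P$ together with $N_H(B)=B$. Both routes are valid; yours is more elementary and explicit, while the paper's is quicker if the reader already knows the Bruhat decomposition of $H/Q$. The map in the other direction (that the orbit of a $B$-fixed point is closed because its stabilizer is parabolic) is implicit in the paper and you spell it out correctly.
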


\begin{proof}
The Hilbert-Chow morphism $\gamma:\HH \to W/\!/G$ is projective (Proposition \ref{chow}) and $H$-equivariant (Proposition \ref{groupaction}), hence $\gamma^{-1}(o)$ is a projective $H$-scheme. If $C$ is any $H$-stable closed subset of $\HH$, then $\gamma(C)$ is a $H$-stable closed subset of $W/\!/G$, and thus $o \in \gamma(C)$, that is, $F:=C \cap \gamma^{-1}(o)$ is non-empty. Hence, $F$ contains at least one $B$-fixed point by Borel's fixed-point theorem (\cite[Theorem 10.4]{Bo}). The last statement of the lemma follows from the fact that for every parabolic subgroup $Q \subset G$ containing $B$ the flag variety $G/Q$ has a unique $B$-fixed point.
\end{proof}

If $X \subset W$ is a $G$-stable closed subscheme, then we denote by $[X] \in \HH$ the corresponding closed point, and by $I \subset P:=k[W]$ the ideal of $X$. 

\begin{proposition} \emph{(\cite[Proposition 3.5]{Br})} \label{isoTangent}
With the notation above, let $T_{[X]} \HH$ be the tangent space of $\HH$ at $[X]$. 
Then there is a canonical isomorphism
$$T_{[X]} \HH \cong N^G_{X/W}:=\Hom_{P}^{G}(I,P/I),$$
where $\Hom_{P}^{G}$ stands for the space of $P$-linear, $G$-equivariant maps.
\end{proposition}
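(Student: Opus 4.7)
The plan is to use the functor-of-points interpretation of the tangent space, together with the representability of the invariant Hilbert functor (section \ref{sec IHS}), in order to reduce to the classical computation of first-order embedded deformations.

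By definition, $T_{[X]}\HH$ is the set of $k$-morphisms $\Spec k[\epsilon] \to \HH$ that send the closed point to $[X]$, where $k[\epsilon] = k[t]/(t^2)$. Representability of the Hilbert functor identifies these with the $G$-stable ideals $\tilde{I} \subset P[\epsilon] := P \otimes_k k[\epsilon]$ such that $P[\epsilon]/\tilde{I}$ is flat over $k[\epsilon]$, with isotypic components locally free of rank $h(M)$, and $\tilde{I} \bmod \epsilon = I$. Since $\Spec k[\epsilon]$ is local and connected and $G$ is reductive, taking isotypic components is exact, so flatness combined with the prescribed special fibre automatically forces the rank condition, and the problem becomes the classification of such $\tilde{I}$.

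Next, I would establish the bijection with $\Hom_P(I, P/I)$ by the classical formula. Choose any $k$-linear lift $\tilde\phi \colon I \to P$ of $\phi \in \Hom_P(I, P/I)$, and let $\tilde{I}_\phi \subset P[\epsilon]$ be the ideal generated by the elements $a - \epsilon\tilde\phi(a)$ for $a \in I$. The $P$-linearity of $\phi$ ensures that $\tilde{I}_\phi$ is independent of the chosen lift, that it reduces to $I$ modulo $\epsilon$, and that $P[\epsilon]/\tilde{I}_\phi$ fits into a short exact sequence $0 \to P/I \to P[\epsilon]/\tilde{I}_\phi \to P/I \to 0$ in which the first arrow is multiplication by $\epsilon$, giving flatness over $k[\epsilon]$. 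Conversely, given a flat deformation $\tilde{I}$, one selects for each $a \in I$ some $c_a \in P$ with $a - \epsilon c_a \in \tilde{I}$; the assignment $\phi \colon a \mapsto [c_a] \in P/I$ is then well defined and $P$-linear, and the two constructions are mutually inverse.

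For the $G$-equivariant refinement, observe that the $G$-action on $P[\epsilon]$ is trivial on $\epsilon$, so a generator $a - \epsilon\tilde\phi(a)$ of $\tilde{I}_\phi$ transforms to $ga - \epsilon g\tilde\phi(a)$; consequently $\tilde{I}_\phi$ is $G$-stable if and only if $g\tilde\phi(a) \equiv \tilde\phi(ga) \pmod{I}$ for all $g \in G$ and $a \in I$, that is, if and only if $\phi \in \Hom_P^G(I, P/I)$. This produces the claimed canonical isomorphism $T_{[X]}\HH \cong \Hom_P^G(I, P/I)$. I do not foresee a serious obstacle here: the only care required is in verifying the $P$-linearity and flatness in the second step, and in tracking the $G$-action through the generator-level equivariance computation in the third step.
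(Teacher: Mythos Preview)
Your argument is correct and is the standard one. Note, however, that the paper does not actually prove this proposition: it is quoted verbatim from \cite[Proposition 3.5]{Br} and stated without proof, so there is no ``paper's own proof'' to compare against. Your write-up is essentially the argument one finds in that reference (functor-of-points description of the tangent space, the classical bijection between first-order embedded deformations and $\Hom_P(I,P/I)$, then the observation that $G$-stability of the lifted ideal is equivalent to $G$-equivariance of the homomorphism). The one place to be slightly more careful is the claim that the Hilbert-function condition is automatic: this is indeed true, and your reasoning (exactness of taking isotypic components for reductive $G$, plus flatness over $k[\epsilon]$ forcing freeness with rank read off from the special fibre) is the right justification, but it is worth stating explicitly that $\tilde I_\phi \cap \epsilon P = \epsilon I$, which is what makes both the flatness and the equivariance computation go through cleanly.
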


\begin{remark}
If $H$ is an algebraic subgroup of ${\Aut}^G(W)$, and if $[X] \in \HH$ is a $H$-fixed point, then the tangent space $T_{[X]} \HH$ is an $H$-module and the isomorphism of Proposition \ref{isoTangent} is $H$-equivariant.   
\end{remark}

\subsection{The Reynolds operator}

Let $G$ be a reductive group, and let $V$ be a rational $G$-module. Then there is a unique $G$-stable complement $V'$ to $V^G\subset V$. According to the decomposition $V=V^G\oplus V'$, we write an element $v\in V$ as $v=v_0 + v'$, where $v_0 \in V^G$ and $v' \in V'$.

\begin{definition}
The projection on the $G$-invariant part 
\[
\RR_V:V\to V^G, \quad v\mapsto v_0
\]
is called the \emph{Reynolds operator} of the $G$-module $V$.
\end{definition}

The Reynolds operator has the following useful property:

\begin{proposition}
Let $V_1, V_2$, and $V$ be rational $G$-modules. Let $\mu :V_1\otimes V_2 \to V$ be a $G$-equivariant morphism. Then
\begin{enumerate}
\item\label{prop enum reynolds} $\RR_V(\mu(v_1 \otimes v_2)) = \mu(\RR_{V_1}(v_1)\otimes v_2)$ if $v_2\in V_2^G$; and
\item $\RR_V(\mu(v_1 \otimes v_2)) = \mu(v_1\otimes \RR_{V_2}(v_2))$ if $v_1\in V_1^G$.
\end{enumerate}
\end{proposition}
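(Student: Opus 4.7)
The plan is to prove (1) first; part (2) will then follow by an identical argument after swapping the roles of $V_1$ and $V_2$ via the $G$-equivariant isomorphism $V_2 \otimes V_1 \to V_1 \otimes V_2$. The strategy is to exploit the uniqueness of the canonical decomposition $V = V^G \oplus V'$: I will exhibit the two terms on the right-hand side of the formula as the $V^G$-part and $V'$-part of $\mu(v_1 \otimes v_2)$ with respect to this splitting.

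The key step is the following observation. Fix $v_2 \in V_2^G$ and consider the linear map $\phi : V_1 \to V$ defined by $\phi(w) := \mu(w \otimes v_2)$. Because $\mu$ is $G$-equivariant and $v_2$ is fixed by $G$, the map $\phi$ is itself $G$-equivariant (a one-line verification on group elements). Now, since $G$ is reductive and the base field has characteristic zero, every rational $G$-module decomposes canonically into isotypic components, and any $G$-equivariant morphism of rational $G$-modules respects this decomposition. In particular $\phi$ sends the trivial isotypic part $V_1^G$ into $V^G$, and the sum $V_1'$ of all non-trivial isotypic parts of $V_1$ into the corresponding sum $V'$ in $V$.

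To conclude, I would decompose $v_1 = \RR_{V_1}(v_1) + v_1'$ with $v_1' \in V_1'$ and apply $\phi$ to obtain
\[
\mu(v_1 \otimes v_2) = \mu(\RR_{V_1}(v_1) \otimes v_2) + \mu(v_1' \otimes v_2),
\]
where, by the previous paragraph, the first summand lies in $V^G$ and the second in $V'$. Uniqueness of the decomposition $V = V^G \oplus V'$ then identifies the first summand with $\RR_V(\mu(v_1 \otimes v_2))$, which is precisely the claim. I do not anticipate any real obstacle: the whole argument reduces to complete reducibility of rational representations of reductive groups in characteristic zero together with the functoriality of the isotypic decomposition under $G$-equivariant maps.
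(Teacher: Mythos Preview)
Your proposal is correct and follows essentially the same approach as the paper: both reduce to part (1) by symmetry, then use that a $G$-equivariant map sends the trivial isotypic component to the trivial isotypic component and its complement to the complement. The only cosmetic difference is that you fix $v_2\in V_2^G$ and work with the induced map $\phi:V_1\to V$, whereas the paper phrases the same observation in terms of the restriction of $\RR_V\circ\mu$ to $V_1'\otimes V_2^G$.
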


\begin{proof}
For symmetry reasons it suffices to show \eqref{prop enum reynolds}. As $\mu$ clearly sends $V_1^G\otimes V_2^G$ to $V^G$ it remains to show that the restriction of $\RR_V\circ\mu$ to $V'_1\otimes V_2^G$, where $V'_1$ is the $G$-stable complement to $V_1^G$ in $V_1$, is the zero map. But this is straightforward as $V^G$ is the trivial $G$-module, $V'_1\otimes V_2^G$ consists only of non-trivial $G$-modules, and $\RR_V\circ\mu$ is a $G$-equivariant morphism.
\end{proof}

Let $V_1$ and $V_2$ be two $G$-modules. Consider the induced action on $\Hom(V_1,V_2)$ given by $g\cdot F:= g\circ F \circ g^{-1}$. A linear map $F:V_1\to V_2$ is invariant for this action if and only if it is a $G$-equivariant linear map. The next corollary will be very useful in the proof of Proposition \ref{lemma ext1 obstruktionsraum}, and also in section \ref{section algorithm}.

\begin{corollary}\label{cor reynolds}
Let $V_1, V_2$, and $V_3$ be rational $G$-modules. Let $F_1:V_1 \to V_2$, and $F_2:V_2\to V_3$ be morphisms. Then
\begin{enumerate}
\item $\RR (F_2\circ F_1) = \RR(F_2)\circ F_1$ if $F_1$ is $G$-equivariant; and
\item $\RR (F_2\circ F_1) = F_2\circ \RR(F_1)$ if $F_2$ is $G$-equivariant.
\end{enumerate}
\end{corollary}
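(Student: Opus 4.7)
The plan is to reduce the corollary to the preceding proposition by viewing composition of linear maps as a $G$-equivariant bilinear pairing on Hom-spaces.

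Concretely, I would introduce the composition map
\[
\mu : \Hom(V_2, V_3) \otimes \Hom(V_1, V_2) \to \Hom(V_1, V_3), \qquad F_2 \otimes F_1 \mapsto F_2 \circ F_1,
\]
where each space $\Hom(V_i, V_j)$ carries the natural rational $G$-action $g \cdot F = g \circ F \circ g^{-1}$. The first step is to verify that $\mu$ is $G$-equivariant, which is immediate from
\[
(g \cdot F_2)\circ(g \cdot F_1) = (g\circ F_2 \circ g^{-1})\circ(g\circ F_1 \circ g^{-1}) = g\circ (F_2\circ F_1)\circ g^{-1} = g\cdot\mu(F_2\otimes F_1).
\]

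The key observation is that a map $F_i$ is $G$-equivariant if and only if it lies in the $G$-invariant subspace of the corresponding Hom-space, as recalled just before the statement of the corollary. Consequently, the corollary is nothing but the preceding proposition applied to $\mu$: part (1) follows by taking $V_1$ (of the proposition) to be $\Hom(V_2,V_3)$, $V_2$ (of the proposition) to be $\Hom(V_1,V_2)$, $v_1 := F_2$, $v_2 := F_1$, and invoking the first part of the proposition once we know $F_1 \in \Hom(V_1,V_2)^G$. Part (2) is obtained by symmetry, using the second part of the proposition under the hypothesis $F_2 \in \Hom(V_2,V_3)^G$.

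There is no real obstacle here. The only point that requires a brief check is the $G$-equivariance of $\mu$, but this is a one-line calculation. Everything else is a direct substitution into the previous proposition, so the proof should be very short.
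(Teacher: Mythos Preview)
Your proposal is correct and is exactly the argument the paper has in mind: the corollary is stated without proof immediately after the proposition, and the intended derivation is precisely to apply that proposition to the $G$-equivariant composition map $\mu:\Hom(V_2,V_3)\otimes\Hom(V_1,V_2)\to\Hom(V_1,V_3)$, using that $G$-equivariance of $F_i$ is the same as $F_i$ lying in the invariants of the corresponding Hom-space. There is nothing to add.
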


To calculate the Reynolds operator in our algorithm of section \ref{section algorithm}, we implemented Algorithm 4.5.19 of \cite{DK} using the computer algebra system \cite[Macaulay2]{Mac2}. This algorithm uses the action of the Lie algebra of $G$ and more particularly the Casimir operator on $k[W]$.

\section{Invariant deformation theory}\label{sec defo}

This section is the heart of this article. Given a set of equations defining a $G$-stable closed subscheme $X\subset W$ corresponding to a point of some invariant Hilbert scheme $\HH=\Hilb_h^G(W)$, our goal is to obtain a presentation by generators and relations of the completed local ring $\OH$.

In section \ref{sssection2} we introduce the main objects of $G$-invariant deformation theory such as the deformation functor and the universal  deformation, and we also make the link with the invariant Hilbert scheme. The remainder of section \ref{sssection2} is dedicated to the explicit computation of these objects. In section \ref{subsec tangent spaces}, we describe the starting point and the general strategy to determine a presentation of the ring $\OH$. In section \ref{subsec obstruction}, we recall the definition of an obstruction theory and of an obstruction map. Section \ref{G eq reso} adapts some well-known technical results on $G$-equivariant presentations to our framework. This is used in section \ref{subsec obstruction 2}, where we describe in detail an obstruction space and an obstruction map for our deformation functor. Our main result, Theorem \ref{lemma cle}, shows how to ``explicitly compute'' the ring $\OH$ modulo an arbitrary power of its maximal ideal, building on an explicit description of obstruction theory which we present in section \ref{subsec obstruction explicit}.

\subsection{Setup}  \label{subsec setup}

Let $G$ be a reductive algebraic group, and let $W$ be an affine $G$-scheme of finite type. We fix a $G$-stable closed subscheme $X\subset W$, whose coordinate ring is denoted by $k[X]$, such that 
\begin{equation} \label{mod covariants}
\forall M \in \Irr(G), \  \dim(\Hom^G(M,k[X])) < \infty.
\end{equation}
It follows from \cite[Lemma 2.1]{Br10} that the condition \eqref{mod covariants} is in fact equivalent to $\dim_k(k[X]^G)<\infty$, which means that $X$ has a finite number of closed $G$-orbits. We call 
$$h_X:\Irr(G) \to \NN,\ M \mapsto \dim(\Hom^G(M,k[X]))$$ 
the \textit{Hilbert function of $X$}, and we denote by 
$$ \HH=\Hilb_{h_X}^G(W)$$
the invariant Hilbert scheme associated with the affine $G$-scheme $W$ and the Hilbert function $h_X$. The subscheme $X\subset W$ defines a point $[X] \in \HH$, and we denote by $\OH$ the formal completion of the local ring $\OO_{\HH,[X]}$.

\subsection{The deformation functor}  \label{sssection2}

We recall the formalism of deformation functors; see \cite{Sern} for an introduction. For this, we need some notation:

\begin{itemize} \renewcommand{\labelitemi}{$\bullet$}
\item $\Art :=\{\text{Artinian local } k\text{-algebras with residue field } k\};$ and
\item $\Noeth:=\{\text{complete Noetherian local } k\text{-algebras with residue field } k\}$.
\end{itemize}

Note that $\Art$ is a full subcategory of $\Noeth$. For $A \in \Noeth$, we denote by $\mm_A$ its maximal ideal. Every $A\in \Noeth$ is the inverse limit of the $A/\mm_A^n \in \Art$.

\begin{definition}  \label{functorDef}
We define the functor $D: \Art \to \Sets$ of infinitesimal $G$-stable deformations of $X$ inside $W$ by:
$$\begin{array}{cccl}
  A & \mapsto & \left\{ 
\begin{array}{c} 
\xymatrix{
\ZZ \ar@{.>}_{\pi}[dr]  \ar@{^{(}->}[r] &   \Spec(A) \times W \ar@{->}[d]^{pr_1}\\
                     &  \Spec(A) } \end{array}
\middle| 
\begin{array}{l} \ZZ  \text{ is a $G$-stable closed subscheme;}  \\ 
{\pi} \text{ is a flat morphism; and}\\
 {\pi}^{-1}(y_A)=X.
 \end{array} 
\right\}, 
\end{array}$$
where $G$ acts trivially on $A$, and $y_A$ is the subscheme defined by the unique maximal ideal of $A$.
\end{definition}

A covariant functor $F: \Art \to \Sets$ is called \emph{prorepresentable} if there exists $\Ach \in \Noeth$ and an  isomorphism of functors $\phi:\Hom_{\Noeth}(\Ach,.) \to F$, where we restrict $\Hom_{\Noeth}(\Ach,.)$ to $\Art\subset \Noeth$. If such a couple $(\Ach,\phi)$ exists, it is unique up to unique isomorphism (\cite[Exercise 15.1]{Ha2}). Such $\Ach$ is said to \emph{prorepresent} the functor $F$.

The functor $D$ defined above was considered for the first time by Cupit-Foutou in \cite[\S 3.4]{CF}; in particular, she mentions the next result, which is an easy consequence of the universal property of the invariant Hilbert scheme.

\begin{lemma}  \label{repD}
The functor $D$ from Definition \ref{functorDef} is  prorepresented by $\OH$. 
\end{lemma}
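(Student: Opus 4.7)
My strategy is to go directly through the representability of the Hilbert functor by $\HH$ and to verify that the only piece of data which is present in $\underline{\Hilb_{h_X}^{G}(W)}(\Spec(A))$ but absent from $D(A)$---the rank condition on the covariant modules---is automatic over Artinian bases. Concretely, for each $A \in \Art$ I would construct a natural bijection $\Hom_{\Noeth}(\OH, A) \cong D(A)$ factoring through the intermediate set $\mathcal{F}(A) := \{f : \Spec(A) \to \HH \mid f \text{ sends the closed point of } \Spec(A) \text{ to } [X]\}$.

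The first step is formal. Since $\OH$ is the $\mm_{[X]}$-adic completion of $\OO_{\HH,[X]}$ and $\mm_{A}$ is nilpotent, any morphism $\OH \to A$ in $\Noeth$ must factor through some quotient $\OO_{\HH,[X]}/\mm_{[X]}^{n}$, so $\Hom_{\Noeth}(\OH, A) = \mathcal{F}(A)$. Next, by the universal property of $\HH$ (Theorem 2.11 of \cite{Br}), an element of $\mathcal{F}(A)$ is the same thing as a flat family $\ZZ \hookrightarrow \Spec(A) \times W$ of $G$-stable closed subschemes whose fibre over the closed point of $\Spec(A)$ equals $X$ \emph{and} whose covariant modules $\AA_{(M)}$ are locally free of rank $h_X(M)$ for every $M \in \Irr(G)$. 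Comparing with Definition \ref{functorDef}, the whole statement then reduces to showing that, once the closed fibre is $X$, the rank condition is automatic.

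This last reduction is the only real content, and I expect it to be the main (if mild) obstacle. Writing $\AA = \pi_{*}\OO_{\ZZ}$ with its isotypic decomposition $\AA = \bigoplus_{M} \AA_{(M)} \otimes M$, the functor $\AA \mapsto \AA_{(M)}$ is exact because $G$ is reductive in characteristic zero, so the flatness of $\AA$ over $A$ descends to each $\AA_{(M)}$; moreover, $\AA_{(M)} / \mm_{A} \AA_{(M)}$ coincides with $\Hom^{G}(M, k[X])$, which has dimension $h_X(M)$. It therefore suffices to check the general fact that a flat module $E$ over an Artinian local ring $A$ (with residue field $k$) whose reduction $E / \mm_{A} E$ has finite dimension $n$ is free of rank $n$. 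For this, I would lift a $k$-basis of $E/\mm_{A} E$ to $x_{1}, \dots, x_{n} \in E$, set $N := \sum_{i} A\, x_{i}$, and observe that $\mm_{A}(E/N) = E/N$; iterating and using the nilpotency of $\mm_{A}$ gives $E/N = 0$, so $E$ is finitely generated, and flatness then forces freeness of rank $n$ over the local ring $A$. Naturality in $A$ is immediate from the universal property of $\HH$, so these identifications assemble into a natural isomorphism $\Hom_{\Noeth}(\OH, -)|_{\Art} \cong D$, which proves the lemma.
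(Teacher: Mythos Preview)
Your argument is correct and follows essentially the same route as the paper: identify $D(A)$ with the set of pointed morphisms $\Spec(A)\to(\HH,[X])$ via the representability of the Hilbert functor, and then pass from $\HH$ to an affine neighbourhood, to the local ring, and finally to its completion using that $A$ is Artinian.

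The one noteworthy difference is that you have explicitly verified a step the paper's proof leaves implicit. In the paper, the first displayed isomorphism $D(A)\cong\{\phi\in\Hom_{Sch}(\Spec(A),\HH)\mid\phi(y_A)=[X]\}$ is asserted without comment, but Definition~\ref{functorDef} does not impose the rank condition on the covariant modules $\AA_{(M)}$, whereas the Hilbert functor does. Your observation that each $\AA_{(M)}$ is automatically a flat $A$-module (as a direct summand of the flat module $\AA$ via the isotypic decomposition) with reduction of dimension $h_X(M)$, together with the elementary lemma that a flat module over an Artinian local ring with finite-dimensional reduction is free of that rank, is exactly what is needed to close this gap. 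So your proof is in fact slightly more complete than the paper's, while remaining the same in spirit.
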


\begin{proof}
For $A \in \Art$, we have 
$$\underline{{\Hilb}_{h_X}^{G}(W)}(\Spec(A)) \cong \Hom_{Sch}(\Spec(A),\HH),$$
and thus
\begin{align*}
D(A) &\cong \{ \phi \in \Hom_{Sch}(\Spec(A),\HH) \ |\ \phi(y_A)=[X]\} \\
     &\cong \{ \phi \in \Hom_{Sch}(\Spec(A),\Spec(B)) \ |\ \phi(y_A)=[X]\}, \\
     & \text{ where $\Spec(B)$ is an open affine neighborhood of  $[X]$ in $\HH$,}\\  
     &\cong \{ \psi \in \Hom_{k\text{-alg}}(B,A) \ |\ \psi^{-1}(\mm_A)=\mm_X\},\\
     & \text{ where $\mm_X$ is the maximal ideal of $B$ corresponding to $X$,}\\
     &\cong \{ \psi \in \Hom_{k\text{-alg}}(B_{\mm_X},A) \ |\ \psi^{-1}(\mm_A)=\mm_X\},\\
     & \text{ where $B_{\mm_X}(=\OO_{\HH,[X]})$ is the localization of $B$ at $\mm_X$,}\\
     &\cong \Hom_{\Noeth}(\OH,A), 
\end{align*}
where the last isomorphism is a consequence of the fact that $A$ is complete. 
We conclude that $\OH$ prorepresents the functor $D$. 
\end{proof}

To simplify the notation, we put 
\begin{equation}\label{eq def R}
\Rch:=\OH.
\end{equation}
Let us mention that, at the end of the next section, we will define a polynomial ring $R$ whose completion along the irrelevant maximal ideal is $\Rch$. 
We fix once and for all an isomorphism of functors
\begin{equation}  \label{isoPhi}
\Hom_{\Noeth}(\Rch,.) \to D.
\end{equation}

The natural morphisms 
$$\Spec(\Rch) \to  \Spec(\OO_{\HH,[X]}) \to \HH$$
and the universal family $\UU \to \HH$ induce the $G$-invariant morphism
$$\XX:=\UU \times_{\HH} \Spec(\Rch) \to \Spec(\Rch).$$
The latter is called the \textit{universal $G$-stable deformation of $X$ inside $W$}. 
When there is no danger of confusion, we will just speak of the \emph{universal deformation}.
We will refer to $\Spec(\Rch)$ or $\Rch$ as the \textit{base space} of the universal deformation.

\subsection{Tangent spaces and algorithmic problem}\label{subsec tangent spaces}

Consider
$$T^1 :=D(k[t]/(t^2)),$$
which is called the \emph{tangent space to the deformation functor}. By means of the isomorphism \eqref{isoPhi}, the set $T^1$ is endowed with a vector space structure, namely $T^1 \cong (\mm_{\Rch}/\mm_{\Rch}^2)^\vee$, and thus $T^1$ is nothing else than the tangent space to the invariant Hilbert scheme at the point $[X]\in \HH$. We will also refer to $T^1$ as the space of \emph{first order $G$-stable deformations of $X$ inside $W$}.

Denote by 
$$M:=(T^1)^\vee \cong \mm_{\Rch}/\mm_{\Rch}^2$$ 
the dual space of $T^1$, and by 
\begin{equation} \label{def S}
S:=\Sym^\bullet M
\end{equation}
the polynomial algebra generated by $M$. Next, define $\mm_S$ to be the maximal ideal of $S$ generated by $M$, and $\Shut$ to be the $\mm_S$-adic completion of $S$. 

Now let $d:=\dim( T^1)$, and choose once and for all elements 
\begin{equation} \label{eq choice of yi}
y_1, \ldots, y_d \in \mm_{\Rch}
\end{equation}
such that their images $t_1, \ldots, t_d \in \mm_{\Rch}/\mm_{\Rch}^{2} \cong M$ form a basis. The proof of the next lemma is elementary. 

\begin{lemma} \label{eq choice}
The morphism $\psi:\Shut \to \Rch,\ t_i \mapsto y_i$ is surjective. \qed
\end{lemma}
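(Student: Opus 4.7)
The plan is to reduce surjectivity of $\psi$ to surjectivity at the level of associated graded rings, and then upgrade this to surjectivity of $\psi$ itself by a completeness argument. First I would observe that $\psi$ is a local homomorphism: by construction $\psi(\mm_{\Shut}) \subset \mm_{\Rch}$, so $\psi$ respects the $\mm$-adic filtrations and induces a morphism of graded $k$-algebras $\mathrm{gr}\,\psi : \mathrm{gr}_{\mm_{\Shut}} \Shut \to \mathrm{gr}_{\mm_{\Rch}} \Rch$.

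Next I would invoke Nakayama's lemma for the ring $\Rch$, which is Noetherian local as the completion of the Noetherian local ring $\OO_{\HH,[X]}$. Since the images of $y_1, \ldots, y_d$ span $\mm_{\Rch}/\mm_{\Rch}^2$ by the choice \eqref{eq choice of yi}, Nakayama implies that $y_1, \ldots, y_d$ generate the ideal $\mm_{\Rch}$. Consequently the degree-$n$ monomials in the $y_i$ generate $\mm_{\Rch}^n$, and their classes span $\mm_{\Rch}^n/\mm_{\Rch}^{n+1}$ as a $k$-vector space. Since the corresponding degree-$n$ monomials in the $t_i$ form a $k$-basis of the graded piece $\Sym^n M = \mm_{\Shut}^n/\mm_{\Shut}^{n+1}$, each graded component $\mathrm{gr}_n \psi$ is surjective.

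The final step is a standard successive approximation. Given $r \in \Rch$, I would construct inductively homogeneous elements $p_n \in \Sym^n M$ such that $r - \psi(p_0 + \cdots + p_n) \in \mm_{\Rch}^{n+1}$; this is possible at each step because $\mathrm{gr}_n\psi$ is surjective. The series $\sum_{n\geq 0} p_n$ converges in the $\mm_S$-adically complete ring $\Shut$ to some element $s$, and by construction $\psi(s) - r$ lies in $\bigcap_n \mm_{\Rch}^n$, which vanishes by Krull's intersection theorem; hence $\psi(s) = r$.

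There is essentially no obstacle here, which is presumably why the authors simply label the lemma as elementary: it is the classical combination of Nakayama's lemma with a completeness argument. The only points requiring care are to record that $\Rch$ is Noetherian (so Nakayama applies to the finitely generated ideal $\mm_{\Rch}$) and Hausdorff for its adic topology, both of which are automatic from $\Rch$ being the completion of a Noetherian local ring.
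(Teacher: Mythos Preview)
Your proof is correct and is essentially the same argument the paper has in mind: the paper simply observes that the lemma is a special case of the general statement recorded as Lemma~\ref{lemma fact} (surjectivity on cotangent spaces implies surjectivity in $\Noeth$), whose proof is precisely the Nakayama-plus-successive-approximation argument you spell out. The only difference is organizational: the paper isolates this principle as a separate lemma for reuse later, whereas you prove it in situ.
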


In fact, it follows from the following -- more general but equally elementary -- statement that will be used several times later on.

\begin{lemma} \label{lemma fact}
Any morphism $A'\to A$ in $\Noeth$ which is surjective on the Zariski cotangent spaces is itself surjective. Any endomorphism $A\to A$ in $\Noeth$ which is surjective on the Zariski cotangent spaces is an automorphism. \qed
\end{lemma}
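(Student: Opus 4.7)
The plan is to prove both statements by the familiar graded-ring-plus-completeness argument. Let $f : A' \to A$ be a morphism in $\Noeth$ whose induced map $\bar f : \mm_{A'}/\mm_{A'}^2 \to \mm_A/\mm_A^2$ is surjective.

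First I would promote $\bar f$ to a surjection of the full associated graded rings $\mathrm{gr}(f) : \bigoplus_n \mm_{A'}^n/\mm_{A'}^{n+1} \twoheadrightarrow \bigoplus_n \mm_A^n/\mm_A^{n+1}$. The key observation is that $\mm_A^n/\mm_A^{n+1}$ is spanned over $k = A/\mm_A$ by products $\overline{x_1\cdots x_n}$ with $x_i \in \mm_A$, and that such a product only depends on the classes $\bar x_i \in \mm_A/\mm_A^2$, since replacing some $x_i$ by $x_i + y$ with $y \in \mm_A^2$ shifts the product by an element of $\mm_A^{n+1}$. Hence surjectivity of $\bar f$ forces surjectivity of $\mathrm{gr}(f)$ in every degree. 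Given any $a \in A$, I would then build a pre-image by successive approximation: since $A'/\mm_{A'} = k = A/\mm_A$, write $a = f(a_0) + b_1$ with $a_0 \in A'$ and $b_1 \in \mm_A$; iteratively, having $b_n \in \mm_A^n$, use surjectivity of $\mathrm{gr}(f)$ in degree $n$ to find $a_n \in \mm_{A'}^n$ with $b_{n+1} := b_n - f(a_n) \in \mm_A^{n+1}$. Completeness of $A'$ makes the series $\sum_{n\ge 0} a_n$ converge to some $\tilde a \in A'$; continuity of $f$ in the $\mm$-adic topologies gives $f(\tilde a) = \sum_n f(a_n)$, and Krull's intersection theorem $\bigcap_n \mm_A^n = 0$ then forces $f(\tilde a) = a$. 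Thus $f$ is surjective.

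For the second statement, the first part already gives that a cotangent-surjective endomorphism $f : A \to A$ is surjective, so it remains to deduce injectivity. For this I would invoke the classical fact that a surjective endomorphism of a Noetherian ring is automatically injective: Noetherianity forces the ascending chain $\ker f \subset \ker f^2 \subset \cdots$ to stabilize at some $\ker f^n = \ker f^{n+1}$, and if $x \in \ker f$ we use surjectivity of $f^n$ to pick $y \in A$ with $f^n(y) = x$; then $f^{n+1}(y) = f(x) = 0$ forces $y \in \ker f^{n+1} = \ker f^n$, whence $x = f^n(y) = 0$. So $f$ is bijective, hence an automorphism in $\Noeth$.

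I do not anticipate any serious obstacles: the whole argument is essentially a textbook exercise combining Nakayama's lemma, graded-ring bookkeeping, and completeness. The only point requiring genuine attention is the passage from surjectivity in degree $1$ on the associated graded to surjectivity in all degrees, which relies on the structural description of $\mm^n/\mm^{n+1}$ as a quotient of $(\mm/\mm^2)^{\otimes n}$ recalled above.
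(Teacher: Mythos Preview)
Your argument is correct. The paper itself supplies no proof of this lemma at all: it is stated with a trailing \qed and the surrounding text calls it ``equally elementary,'' so there is nothing to compare against. Your write-up simply fills in the standard details (surjectivity on $\mm/\mm^2$ $\Rightarrow$ surjectivity on the associated graded, successive approximation using completeness of $A'$ and separatedness of $A$, and the ascending-chain trick for injectivity of a surjective Noetherian endomorphism), which is exactly what the authors had in mind when they omitted the proof.
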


According to Lemma \ref{eq choice}, calculating $\Rch$ is tantamount to calculating the ideal
\begin{equation}\label{eq def khut}
\hat K:=\Ker(\psi) \subset \mm_{\Shut}^2.
\end{equation}
We will do this step by step in section \ref{section algorithm}, that is, we will device an algorithm calculating $\hat K + \mm_\Shut^{n+1}$ for each $n \geq 1$. Notice also that the ideal $\hat K + \mm_\Shut^{n+1}$ is generated by polynomials in $S$. Therefore, we may equally well perform all our calculations in $S$. The goal is then to calculate 
\begin{equation}\label{eq def kn}
K_n:= \left(\hat K + \mm_\Shut^{n+1}\right) \cap S \subset \mm_S^2.
\end{equation}

Notice that $S/K_n  \cong \Rch/\mm_{\Rch}^{n+1} \cong R/\mm_R^{n+1}$, where we denote by $R\subset \Rch$ the ring of polynomials in the $y_i$; in other words, $R$ is the image of $S$ under the morphism $\psi$ of Lemma \ref{eq choice}.

\subsection{Obstruction Spaces}\label{subsec obstruction}

An important ingredient for calculating the universal deformation step by step is Schlessinger's notion of a \emph{small extension}. This is an exact sequence
\begin{equation}\label{eq small extension}
 0 \to J \to A' \to A \to 0,
\end{equation} 
where $A,\ A' \in \Art$, and $J$ is an ideal of $A'$ satisfying $\mm_{A'}J=0$. Thus, the $A'$-module structure of $J$ factors through $k=A'/\mm_{A'}$, and $J$ is nothing more than a vector space (over $k$).

\begin{definition}   \label{def obstruction theory}
An \textit{obstruction theory} for a covariant functor $F: \Art \to \Sets$ is the following datum:
\begin{itemize} 
\item a finite dimensional vector space $V_F$; and
\item for each small extension as in \eqref{eq small extension}, a map $$\ob:F(A) \to J \otimes V_F,$$ 
\end{itemize}

with the following properties:

\begin{enumerate} 
\item The sequence $$ F(A') \to F(A) \stackrel{ob}{\to} J \otimes V_F$$
is  exact.

\item A commutative diagram between small extensions 
$$ \xymatrix{
 0 \ar[r] &J_1 \ar[r] \ar[d]_{f} & A' \ar[r] \ar[d] & A \ar[r] \ar[d] &         0 \\
 0 \ar[r] &J_2 \ar[r] & B \ar[r]  & B' \ar[r]  &         0 
 }$$
gives rise to the commutative diagram with exact rows
 $$\xymatrix{
  F(A') \ar[r] \ar[d] &F(A) \ar[r]^(0.4){ob_1} \ar[d] &  J_1 \otimes V_F   \ar[d]^{f \otimes \id}   \\
  F(B) \ar[r]        &F(B') \ar[r]^(0.4){ob_2}        &          J_2 \otimes V_F
 }$$
\end{enumerate} 
The map $\ob$ is called an \emph{obstruction map}, and the vector space $V_F$ is called an \emph{obstruction space}.
\end{definition}

For a given functor with an obstruction theory, the obstruction spaces are by no means unique. One could for example use any space containing a given obstruction space $V_F$.

The argument of \cite[Example 11.0.2]{Ha2} guarantees that the deformation functor $D$ of Definition \ref{functorDef} has an obstruction theory, an obstruction space being given by $V_D:=(\hat K/\mm_{\Shut}\hat K)^{\vee}$. However, this is  rather an abstract existence result, and $V_D$ is not directly accessible to calculations as the whole story is about calculating $\hat K$. 

In most practical situations there are canonical obstruction spaces. We will exhibit one for the functor $D$ in section \ref{subsec obstruction 2}, but first we need a digression on $G$-equivariant presentations.

\subsection{\texorpdfstring{$G$}{G}-equivariant presentations}  \label{G eq reso}

We take the same notation as in section \ref{subsec setup}, and we abbreviate $P:=k[W]$ and $P_A:=P\otimes A$ for a $k$-algebra $A$. Let $I \subset P$ be the defining ideal of the $G$-stable closed subscheme $X \subset W$. We choose once and for all a $G$-equivariant presentation of $P/I$ (one easily checks that such a presentation always exists), that is, we take $N_1 \subset P$ and $N_2 \subset P \otimes N_1$ two $G$-submodules, of dimension $n_1$ and $n_2$ respectively, such that there is an exact sequence of $(P, G)$-modules:
\begin{equation}\label{eq equivariant resolution}
 \begin{array}{cccccccccc}
(\Pi_0) & &P \otimes N_2 & \stackrel{v_0}{\to} &P \otimes N_1 & \stackrel{u_0}{\to} &P & \to &P/I & \to 0  \\
&&1 \otimes r & \longmapsto & r & & && & \\
&&&& 1 \otimes f & \longmapsto & f &&&
\end{array}
\end{equation}
Hereby, $G$ acts on $P, N_1$, and $N_2$, while the $P$-module structure is induced by that of $P$ given by the multiplication. Notice that $u_0$ and $v_0$ are morphisms of $(P,G)$-modules; in particular, they are $G$-equivariant.

The next statement is a slight generalization of \cite[Theorem A.10]{Sern}. The proof is similar to the one given by Sernesi but we chose to give a sketch since, in our case, a reductive group $G$ acts on $P$, and all morphisms have to be $G$-equivariant.

\begin{theorem} \label{eqMat}
With the above notation, let $A'\to A$ be a surjection in $\Art$. Let $\pi:\ZZ \to \Spec (A)$ be a family of $G$-stable flat subschemes of $W$, and denote by $I_\ZZ \subset P_A$ the defining ideal of $\ZZ$. Then there exists an exact sequence
\begin{equation}\label{eq resolution pi}
(\Pi) \hspace{3mm} 
\begin{array}{cccccccc}
P_{A} \otimes N_2 & \stackrel{v}{\to} & P_{A} \otimes N_1 & \stackrel{u}{\to} & P_{A} & \to &P_A/I_\ZZ & \to 0.  \\
\end{array}
\end{equation}
of $(P_A,G)$-modules and, given a sequence $(\Pi_0)$ as in \eqref{eq equivariant resolution}, the sequence $(\Pi)$ may be chosen to satisfy $(\Pi)\otimes k = (\Pi_0)$.
Moreover, the following are equivalent for an ideal $I_{\ZZ'}\subset P_{A'}$:
\begin{enumerate}
\item \label{ffllaatt} $\ZZ':=\Spec \left(P_{A'}/I_{\ZZ'}\right)$ is $G$-stable and flat over $A'$, and $(P_{A'}/I_{\ZZ'})\otimes_{A'}A = P_{A}/I_{\ZZ}$;
\item\label{thm item exact sequence} there exists an exact sequence of $(P_{A'},G)$-modules
\begin{equation*}
(\Pi') \hspace{3mm} \begin{array}{cccccccc}
P_{A'} \otimes N_2 & \stackrel{v'}{\to} & P_{A'} \otimes N_1 & \stackrel{u'}{\to} & P_{A'} & \to &P_{A'}/I_{\ZZ'} & \to 0  \\
\end{array}
\end{equation*}
such that $(\Pi')\otimes_{A'} A=(\Pi)$; and
\item\label{thm enum lifting relations} there exists a complex as in \eqref{thm item exact sequence} which is exact except possibly at $P_{A'} \otimes N_1$.
\end{enumerate}
\end{theorem}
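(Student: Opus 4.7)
The plan is to first isolate the key equivariant lifting lemma and then apply it repeatedly. The lemma I need is: if $M \twoheadrightarrow M'$ is a surjection of (possibly infinite dimensional) rational $G$-modules, and $N \to M'$ is a $G$-equivariant $k$-linear map from a rational $G$-module $N$, then there exists a $G$-equivariant lift $N \to M$. This is immediate from the complete reducibility of rational $G$-modules, which in turn follows from reductivity of $G$; equivalently, one can use the Reynolds operator to split the surjection as a map of $G$-modules. This one fact is what forces us beyond Sernesi's argument.

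For the existence of the resolution $(\Pi)$ reducing to $(\Pi_0)$, I would argue as follows. Flatness of $P_A/I_\ZZ$ over $A$ together with the short exact sequence $0 \to I_\ZZ \to P_A \to P_A/I_\ZZ \to 0$ forces $I_\ZZ$ itself to be $A$-flat (via the long exact Tor sequence, using $P_A$-freeness), so $I_\ZZ \otimes_A k = I$. Apply the equivariant lifting lemma to the $G$-equivariant composition $N_1 \hookrightarrow P \twoheadrightarrow I = I_\ZZ \otimes_A k$ through $I_\ZZ \twoheadrightarrow I_\ZZ \otimes_A k$ to obtain a $G$-equivariant $k$-linear map $N_1 \to I_\ZZ$. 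Extending $P_A$-linearly defines $u:P_A\otimes N_1 \to P_A$ with image contained in $I_\ZZ$. Nakayama's lemma, which holds for arbitrary modules over Artin local rings because $\mathfrak{m}_A$ is nilpotent, then forces $\Im(u) = I_\ZZ$. By flatness of $I_\ZZ$ the kernel $\ker(u)$ is also $A$-flat with $\ker(u)\otimes_A k = \ker(u_0)$, so the same lifting–plus–Nakayama argument produces $v: P_A \otimes N_2 \to \ker(u)$ with $\Im(v) = \ker(u)$. By construction $(\Pi)\otimes_A k = (\Pi_0)$.

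For the equivalence, the implications (i)$\Rightarrow$(ii) and (ii)$\Rightarrow$(iii) are easy. For (i)$\Rightarrow$(ii) one repeats verbatim the argument above but now lifting the $G$-equivariant map $N_1 \to I_\ZZ$ (which is the reduction modulo $\ker(A'\to A)$ of the sought $N_1 \to I_{\ZZ'}$) through the surjection $I_{\ZZ'} \twoheadrightarrow I_\ZZ$; flatness of $I_{\ZZ'}$ over $A'$ is established exactly as for $I_\ZZ$, and Nakayama yields surjectivity onto $I_{\ZZ'}$, after which $v$ is lifted to $v'$ in the same way. The implication (ii)$\Rightarrow$(iii) is tautological.

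The main obstacle is (iii)$\Rightarrow$(i), which requires extracting $A'$-flatness from the exactness properties of $(\Pi')$. My approach is to reduce to the case of a small extension $0 \to J \to A' \to A \to 0$ by factoring a general surjection into small ones (any surjection in $\Art$ admits such a factorization), and then to invoke the local criterion of flatness: for a small extension, $P_{A'}/I_{\ZZ'}$ is $A'$-flat if and only if the multiplication map $J \otimes_k (P/I) \to I_{\ZZ'}$ is injective, equivalently if and only if $\Tor_1^{A'}(P_{A'}/I_{\ZZ'},k)=0$. To verify this, I form the commutative diagram whose rows are the assumed complex $(\Pi')$, the reduction $(\Pi)$, and the kernel complex obtained from the short exact sequences of $A'$-modules $0 \to J\otimes_k (P\otimes N_i) \to P_{A'}\otimes N_i \to P_A \otimes N_i \to 0$ (and similarly for $P_{A'}$), using that $\mathfrak{m}_{A'}J=0$ identifies $J\otimes_{A'}-$ with $J\otimes_k(-\otimes_A k)$. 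Applying the snake lemma to this diagram, using exactness of $(\Pi)$ on the bottom and the partial exactness of $(\Pi')$ on top, identifies the hypothetical failure of flatness with the homology of $(\Pi')$ at $P_{A'}\otimes N_1$; but that homology vanishes by assumption, yielding both the $A'$-flatness and the exactness of $(\Pi')$ at $P_{A'}\otimes N_1$ simultaneously. This is essentially the argument of Sernesi's Theorem A.10, made $G$-equivariant by the equivariant lifting lemma, and the only delicate point is bookkeeping the snake lemma carefully so that the identification is genuine.
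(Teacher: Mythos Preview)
Your approach is correct and matches the paper's: the existence of $(\Pi)$ and the implication (i)$\Rightarrow$(ii) are handled exactly as in the paper's sketch (equivariant lift via complete reducibility/Reynolds, flatness to identify $I_\ZZ\otimes_A k = I$, then Nakayama using nilpotence of $\mathfrak m_A$), and your (iii)$\Rightarrow$(i) via the local criterion of flatness and the long exact sequence for the short exact sequence of complexes $0\to J\otimes_k(\Pi_0)\to(\Pi')\to(\Pi)\to 0$ is precisely the Sernesi argument the paper invokes. The reduction to small extensions is also fine, since defining $I_i:=\Im(u'\otimes_{A'}A_i)$ at each intermediate stage lets you carry flatness and exactness up the tower inductively.

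That said, your write-up of (iii)$\Rightarrow$(i) contains slips that, read literally, would make the argument circular. First, the flatness criterion requires injectivity of $J\otimes_k(P/I)\to P_{A'}/I_{\ZZ'}$, not into $I_{\ZZ'}$. Second, and more seriously, you say the homology of $(\Pi')$ at $P_{A'}\otimes N_1$ ``vanishes by assumption'': condition (iii) explicitly does \emph{not} assume this. What the long exact sequence actually gives is that \emph{both} this homology \emph{and} the kernel of $J\otimes(P/I)\to P_{A'}/I_{\ZZ'}$ vanish, because the neighbouring terms coming from $(\Pi)$ and from $J\otimes(\Pi_0)$ vanish (those two complexes are exact at the relevant spots). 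Your final clause shows you understand this, but the sentence as written assumes what must be proved. Also note that in the three-row diagram $(\Pi')$ sits in the \emph{middle}, with $J\otimes(\Pi_0)$ on top, not the other way around.
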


\begin{proof}
Let us show the existence of the presentation $(\Pi)$ such that $(\Pi)\otimes k = (\Pi_0)$ directly. Choose a morphism of $P_A$-modules $u:P_A \otimes N_1 \to I_{\ZZ}$ such that the following diagram commutes
$$ \xymatrix{
    P_A \otimes N_1 \ar[r]^{u} \ar[d]_{p_1}  & I_\ZZ \ar[d]^{p_2} \\
    P \otimes N_1 \ar[r]_{u_0} & I
  }$$
where the $p_i$ are the morphisms induced by $A \to A/\mm_A=k$. As $u_0$, $p_1$, and $p_2$ are $G$-equivariant, $u$ can be chosen $G$-equivariant. As $P_A/I_{\ZZ}$ is a flat $A$-module, we obtain $\Tor_1^A(P_A/I_\ZZ,k)=0$. Tensoring the exact sequence
$$0 \to I_\ZZ  \to P_A  \to P_A/I_\ZZ \to 0 $$ 
over $A$ with $k$ and using $(P_A/I_\ZZ) \otimes_A k = P/I$, we see that $I_\ZZ \otimes_A k=I$. It follows from Nakayama's lemma that $u$ is surjective. Arguing as before, we find a surjective morphism of $(P_A,G)$-modules $v:P_A \otimes N_2 \to \Ker(u)$ such that the following diagram commutes
$$ \xymatrix{
    P_A \otimes N_2 \ar[r]^{v} \ar[d]_{p_1} & \Ker(u) \ar[d]^{p_2} \\
    P \otimes N_2 \ar[r]_{v_0} & \Ker(u_0)
  }$$ 
The proof of equivalence of the conditions $(1)$, $(2)$, and $(3)$ requires similar arguments and is left to the reader.
\end{proof}

\begin{remark}
We say that the couple $(u,v)$ \textit{represents} the family $\ZZ \to \Spec(A)$; this couple is not unique in general. 
The implication \eqref{thm enum lifting relations}  $\Rightarrow$ \eqref{ffllaatt} tells us that, to construct a flat family, we only have to lift relations and we do not need to care about verifying exactness properties.
\end{remark}

Applying the left exact contravariant functor $\Hom_P(\cdot,P/I)$ to the presentation \eqref{eq equivariant resolution} and taking the $G$-invariants (which is a right exact functor since $G$ is reductive), we get the exact sequence of vector spaces
\begin{equation*} 
  \xymatrix{
    0 \ar[r] &{\Hom}_{P}^{G}(I,P/I) \ar[r]^(0.45){u_0^*} & {\Hom}_{P}^{G}(P {\otimes} N_1,P/I) \ar[r]^{v_0^*} \ar[d]^{\cong} & {\Hom}_{P}^{G}(P \otimes N_2,P/I) \ar[d]^{\cong} \\
        &   &  \Hom^G(N_1,P/I) & \Hom^G(N_2,P/I)
  }
\end{equation*}
Together with Proposition \ref{isoTangent}, this sequence implies that 
$$T_{[X]} \HH  \isom \Ker\left(v_0^*: \Hom^G(N_1,P/I) \to \Hom^G(N_2,P/I)\right),$$
and thus $\dim(T_{[X]} \HH)=\dim(\Hom^G(N_1,P/I))-\rg(v_0^*)$. This observation enables us to compute the tangent space $T_{[X]} \HH$ algorithmically; see  section \ref{first order}.

\subsection{Obstruction spaces II}\label{subsec obstruction 2}

In the setting of section \ref{subsec setup}, there is a canonical obstruction space for the deformation functor $D$ of Definition \ref{functorDef}.

\begin{proposition} \label{lemma ext1 obstruktionsraum}
An obstruction space for the deformation functor $D$ is given by $\Ext^{1,G}_P(I,P/I)$, where $P=k[W]$ and $I \subset P$ is the ideal of $X$.
\end{proposition}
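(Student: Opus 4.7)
My plan is to imitate the classical (non-equivariant) obstruction construction for the Hilbert functor, using the Reynolds operator throughout to handle $G$-equivariance (so that surjections of rational $G$-modules admit $G$-equivariant sections and the functor of $G$-invariants is exact).

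First, I would fix the presentation $(\Pi_0)$ from \eqref{eq equivariant resolution} once and for all, set $\Omega := \Ker u_0 = \Im v_0 \subset P \otimes N_1$, and apply the exact functor $\Hom^G_P(-,P/I)$ to $0 \to \Omega \to P \otimes N_1 \to I \to 0$. Since $P \otimes N_1$ is $P$-free, $\Ext^{1,G}_P(P \otimes N_1, P/I)=0$, and this yields the identification
\[
\Ext^{1,G}_P(I,P/I) \ \cong \ \Hom^G_P(\Omega,P/I) \big/ \Im v_0^*.
\]
For a small extension $0 \to J \to A' \to A \to 0$ and $\mathcal Z \in D(A)$, I would construct the obstruction class as follows: invoke Theorem \ref{eqMat} to obtain a $G$-equivariant presentation $(u,v)$ of $\mathcal Z$ lifting $(u_0,v_0)$, then choose $G$-equivariant $P_{A'}$-linear lifts $\tilde u, \tilde v$ of $u, v$ (apply the Reynolds operator to arbitrary lifts). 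Since $u \circ v = 0$ modulo $J$ and $\mm_{A'} \cdot J = 0$, the composition $\tilde u \circ \tilde v$ takes values in $J \cdot P_{A'} = J \otimes_k P$ and factors through $P \otimes N_2$, producing a $G$-equivariant $P$-linear map $\bar w : P \otimes N_2 \to J \otimes_k P$. Composing with $J \otimes P \to J \otimes P/I$, I would set $\ob(\mathcal Z):=[\bar w] \in J \otimes \Ext^{1,G}_P(I,P/I)$.

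The core obstacle will be to verify that $\bar w$ kills $\Ker v_0 \subset P \otimes N_2$, so that $\bar w$ descends to a map on $\Omega$ and thereby defines a class in the quotient above. For $r \in \Ker v_0$ and a lift $\tilde r \in P_{A'} \otimes N_2$, one has $\tilde v(\tilde r) \in \mm_{A'}(P_{A'} \otimes N_1)$ because $\tilde v$ reduces to $v_0$ modulo $\mm_{A'}$; hence $\tilde u(\tilde v(\tilde r)) \in \mm_{A'} \cdot \tilde u(P_{A'} \otimes N_1)$. Combining this with the a priori containment $\tilde u(\tilde v(\tilde r)) \in J \otimes P$, and using a $k$-linear splitting $\mm_{A'} = \mm_A \oplus J$ together with the vanishings $\mm_{A'} J = 0$ (whence $\mm_A J = J^2 = 0$ in $A'$), a direct decomposition argument forces $\tilde u(\tilde v(\tilde r)) \in J \otimes I$. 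Projecting to $P/I$ then gives $\bar w(r) = 0$, as needed.

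The remaining verifications should be routine. Independence from the choices of $\tilde u, \tilde v$ follows because modifying $\tilde u$ by $\delta u : P \otimes N_1 \to J \otimes P$ changes $\bar w$ by $\delta u \circ v_0 \in \Im v_0^*$, while modifying $\tilde v$ changes $\bar w$ by a map with image in $J \otimes I$; independence from the choice of $(u,v)$ is handled by a standard $G$-equivariant chain-homotopy argument. The exactness condition of Definition \ref{def obstruction theory} follows from Theorem \ref{eqMat}\eqref{thm enum lifting relations}: if $\ob(\mathcal Z) = 0$, one first adjusts $\tilde u$ (via a $G$-equivariant lift of a section of $P \to P/I$) to arrange that $\tilde u \circ \tilde v$ has image in $J \otimes I$, and then exploits the $P$-projectivity of $P \otimes N_2$ (combined with the Reynolds operator) to solve a lifting problem against the surjection $u_0 : P \otimes N_1 \to I$, yielding an adjustment of $\tilde v$ that forces $\tilde u \circ \tilde v = 0$ and thus produces the desired lift of $\mathcal Z$ to $A'$. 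Finally, the functoriality square in Definition \ref{def obstruction theory} is immediate from the compatibility of the entire construction with $A'$-linear base change.
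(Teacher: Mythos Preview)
Your overall strategy is exactly the paper's: lift the presentation $(u,v)$ to $G$-equivariant $(\tilde u,\tilde v)$ over $A'$, take the class of $\tilde u\circ\tilde v$ in $J\otimes\Ext^{1,G}_P(I,P/I)$, and use Theorem~\ref{eqMat}\eqref{thm enum lifting relations} for the exactness. The only real difference is cosmetic: you compute the Ext group as $\Hom^G_P(\Omega,P/I)/\Im v_0^*$, whereas the paper extends the resolution one step to $P\otimes N_3$ and shows the class is a cocycle in the three-term complex~\eqref{eq ext0}. These are equivalent descriptions of the same group.

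However, your argument for the ``core obstacle'' has a genuine gap. From $r\in\Ker v_0$ you correctly deduce $\tilde v(\tilde r)\in \mm_{A'}\cdot(P_{A'}\otimes N_1)$ and hence
\[
\tilde u(\tilde v(\tilde r))\in (J\otimes P)\cap\bigl[(\mm_{A'}\otimes I)+(\mm_{A'}^2\otimes P)\bigr].
\]
But this intersection need not be contained in $J\otimes I$: for the small extension $A'=k[t]/(t^3)\twoheadrightarrow A=k[t]/(t^2)$ one has $J=\mm_{A'}^2$, so the right-hand side already contains all of $J\otimes P$ and the ``direct decomposition argument'' yields no information. Your splitting $\mm_{A'}=\mm_A\oplus J$ and the relations $\mm_A J=J^2=0$ do not prevent products in $\mm_A^2$ from landing in $J$.

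The missing ingredient is flatness of $\ZZ$ over $A$, which you have not invoked at this step. Flatness forces $\Ker v\otimes_A k\cong\Ker v_0$, so $r$ lifts to some $\tilde r_A\in\Ker v$; lifting further to $\tilde r\in P_{A'}\otimes N_2$ gives $\tilde v(\tilde r)\in J\otimes P\otimes N_1$ (not merely in $\mm_{A'}\otimes P\otimes N_1$), and then $\tilde u\vert_{J\otimes P\otimes N_1}=u_0\otimes\id_J$ lands in $J\otimes I$ as required. This is exactly what the paper's $N_3$-argument does in disguise: extending to $P\otimes N_3$ with $v\circ w=0$ over $A$ encodes precisely the surjectivity $\Ker v\twoheadrightarrow\Ker v_0$. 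Once you insert this use of flatness, the rest of your outline goes through.
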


\begin{proof}
Let $A'\to A$ be a small extension in $\Art$, and let $J:=\Ker(A'\to A)$. Given some $\lambda \in D(A)$ we have to construct an element $\ob(\lambda) \in \Ext^{1,G}_P(I,P/I) \otimes J$ which is $0$ if and only if there exists $\lambda'\in D(A')$ restricting to $\lambda$ over $A$. 

Let $I_\ZZ \subset P_A=P \otimes A$ be the ideal of the $G$-stable deformation of $X$ over $A$ given by $\lambda$. Suppose there were an ideal $I_{\ZZ'}\subset P_{A'}$ such that $P_{A'}/I_{\ZZ'}$ is flat over $A'$, and $(P_{A'}/I_{\ZZ'}) \otimes_{A'} A=P_A/I_{\ZZ}$. Then the exact sequence
$$\Tor_{A'}^{1}(P_{A'}/I_{\ZZ'},A) \to I_{\ZZ'} \otimes_{A'} A \to P_{A'} \otimes_{A'} A \to (P_{A'}/I_{\ZZ'}) \otimes_{A'} A $$ 
implies that $I_{\ZZ'} \otimes_{A'} A=I_{\ZZ}$, since $\Tor_{A'}^{1}(P_{A'}/I_{\ZZ'},A)=0$, and that $I_{\ZZ'}$ is flat over $A'$. Observe that the kernel of the restriction $I_{\ZZ'}\to I_\ZZ$ is canonically isomorphic to $I\otimes J$. This follows from flatness and the fact that the multiplication $I_{\ZZ'}\otimes J\to I_{\ZZ'}$ factors through $I_{\ZZ'}\otimes J \to I\otimes J$ as $\mm'.J =0$, where $\mm'$ denotes the maximal ideal of $A'$. So extensions of $I_\ZZ$ to $A'$ are extensions of $I_\ZZ$ by $I\otimes J$. 

We want to construct an extension of $I_\ZZ$ to $A'$ by using the description of Theorem \ref{eqMat}. Fix a $G$-equivariant presentation $(\Pi)$ of $k[\ZZ]$ as in Theorem \ref{eqMat}, and denote by $u$ and $v$ the corresponding morphisms. One may check that, similarly as for $I_{\ZZ'}$, a presentation $(\Pi')$ of $I_{\ZZ'}$ is an extension of $(\Pi)$ by $(\Pi_0)\otimes J$.
Hence, in order to obtain a flat extension of $I_\ZZ$ to $A'$, we have to find $G$-equivariant morphisms $u'$ and $v'$ completing the following diagram
\begin{equation}\label{eq construction of obstruction}
\xymatrix{
P \otimes N_2 \otimes J\ar[r]^{v_0\otimes\id}\ar[d] &P\otimes N_1 \otimes J \ar[r]^(0.6){u_0\otimes\id}\ar[d] &P\otimes J \ar[d]^{q}\\
P_{A'}\otimes N_2 \ar@{-->}[r]^{v'} \ar[d]&P_{A'}\otimes N_1 \ar@{-->}[r]^{u'} \ar[d]&P_{A'}\ar[d] \\
P_A\otimes N_2 \ar[r]^v &P_A\otimes N_1 \ar[r]^u &P_A
}
\end{equation}
such that each square commutes, and $u'\circ v' =0$ (flatness will follow from Theorem~\ref{eqMat}). Here $u_0 = u\otimes_A k$ and $v_0 = v\otimes_A k$ by assumption.

As the $P_{A'}$-modules in the middle row are free, we may find horizontal arrows making the lower squares commute. The commutativity of the upper squares is automatic and does not depend on the choice of $u'$ and $v'$ as long as the lower square commutes. This again is a consequence of $\mm'\cdot J =0$. Replacing $(u',v')$ by $(\RR(u'), \RR(v'))$, where $\RR$ denotes the Reynolds operator, we may assume that all morphisms are $G$-equivariant. Indeed, the application of the Reynolds operator does not affect the commutativity of Diagram \eqref{eq construction of obstruction} by Corollary \ref{cor reynolds}. 

Thus, for a given $\lambda \in D(A)$, we can always construct a commutative diagram like Diagram \eqref{eq construction of obstruction}, but $\lambda$ is the restriction of some $\lambda' \in D(A')$ if and only if the morphisms $(u',v')$ can be chosen such that $u' \circ v'=0$. The commutativity of Diagram \eqref{eq construction of obstruction} implies that the composition $u' \circ v'$ takes values in $P\otimes J$, and thus there exists a map $\alpha:P_{A'} \otimes N_2 \to P \otimes J$ such that $q \circ \alpha=u' \circ v'$. The composition of the map $\alpha$ with the projection $\psi:P\otimes J \to (P/I)\otimes J$ gives an element 
\begin{align*}
\eta \in \Hom_P^G(P_{A'}\otimes N_2, (P/I)\otimes J) &\cong \Hom_P^G(P\otimes N_2, (P/I)\otimes J) \\
                     &\cong \Hom_P^G(P\otimes N_2, P/I) \otimes J 
\end{align*}
where first isomorphism follows from $\mm'.J=0$, and the second isomorphism follows from the fact that $G$ acts trivially on $J$.
 
If we enlarge the lines of Diagram \eqref{eq construction of obstruction} one step further to the left and consider the sequence
\begin{equation}\label{eq ext0}
\Hom^{G}_P(P \otimes N_1,P/I)  \to[d_1] \Hom^{G}_P(P \otimes N_2,P/I)  \to[d_2] \Hom^{G}_P(P \otimes N_3,P/I),  
\end{equation}
then we see that $\eta$ is in fact a cocycle and determines an element 
$$\overline{\eta} \in \Ext^{1,G}_P(I,P/I) \otimes J=(\Ker(d_2)/\Im(d_1)) \otimes J.$$ 
We claim that it is possible to change $u'$ and $v'$ such that their product is zero if and only if $\overline{\eta}=0$, and we put
\begin{equation} \label{ob map}
\ob(\lambda):=\overline{\eta}.
\end{equation}
   
To verify this last claim suppose that $\eta$ is a boundary. Then there is some $G$-equivariant $\xi:P_{A'} \otimes N_1  \to (P/I)\otimes J$ such that $\eta = \xi \circ v'$. We lift $\xi$ to a $G$-equivariant morphism $\tilde \xi :P_{A'} \otimes N_1 \to P \otimes J$. Note that we may always first take an arbitrary lift and then apply the Reynolds operator. Then we replace $u'$ by $u'':=u' - \tilde \xi$. The composition $u''\circ v'$ takes values in $\Ker( \psi) = I\otimes J \subset P\otimes J$, and hence we find an equivariant map $\delta : P_{A'}\otimes N_2 \to P\otimes N_1 \otimes J$ such that $q\circ (u_0\otimes \id) \circ \delta = u''\circ v'$. Replacing $v'$ by $v'':= v' - \delta$ we thus have $u''\circ v'' = 0$ and obtain a flat $G$-stable extension $I_{\ZZ'}:=\Im( u'')$ of $I_\ZZ$ to $A'$.

For the converse we have to read the preceding paragraph backwards. Suppose there were $u''$ and $v''$ fitting in the diagram \eqref{eq construction of obstruction}. Then we have to show that the $\overline \eta$ defined from $u'\circ v'$ is a coboundary. We obtain $\delta$ as $v'-v''$ as above. It suffices to show that $\eta - (u_0\otimes \id)\circ \delta$ is a coboundary. But this is equal to $(u'-u'')\circ v = \tilde \xi \circ v$ and $\xi=\psi\circ\tilde \xi$ satisfies $\eta = \xi \circ v'$.

Finally, condition (2) of Definition \ref{def obstruction theory} is tedious but straightforward.
\end{proof}

The obstruction space given by Proposition \ref{lemma ext1 obstruktionsraum} is quite reasonable, but still not optimal for our purposes. We will introduce a more convenient obstruction space in Corollary \ref{lemma obstruction inclusion}.

\begin{notation} \label{not avec base}
For $i=1,2$ we identify $P \otimes N_i$ with $P^{n_i}$. Such identifications are equivalent to fixing bases of the vector spaces $N_1$ and $N_2$. Note that $G$ acts not only on the coefficients of $P^{n_i}$ but also on the basis vectors.  
\end{notation}
 
Let $(U_0,V_0) \in \Hom_P(P^{n_1},P) \times \Hom_P(P^{n_2},P^{n_1})$ be matrices representing the morphisms $(u_0,v_0)$ of the exact sequence \eqref{eq equivariant resolution}. By definition of $u_0$ and $v_0$, the matrices $U_0$ and $V_0$ are $G$-equivariant. Consider the morphism of $(P,G)$-modules
\begin{small}
\begin{equation}\label{eq obstruktionsraum}
\mu:\Hom_P(P^{n_1},P) \oplus \Hom_P(P^{n_2},P^{n_1}) \to \Hom_P(P^{n_2},P), \quad (C,D)\mapsto C V_0 + U_0 D,
\end{equation}
\end{small}
and define
\begin{equation}\label{eq def nsansg}
N:=\Coker(\mu).
\end{equation}
At this point, it is useful to deal also with the non-$G$-equivariant situation, as the standard procedure to solve equations for $G$-equivariant matrices is to solve the equation for arbitrary matrices, and then to apply the Reynolds operator. Define also
\begin{small}
\begin{equation}\label{eq obstruktionsraum equiv}
\mu^G: \Hom_P^G(P^{n_1},P) \oplus \Hom_P^G(P^{n_2},P^{n_1}) \to \Hom_P^G(P^{n_2},P), \quad (C,D)\mapsto C V_0 + U_0 D
\end{equation}
\end{small}
and
\begin{equation}\label{eq def navecg}
N^G:=\Coker(\mu^G).
\end{equation}
Notice that the notation is justified as taking invariants $(\cdot)^G$ is a right exact functor since $G$ is reductive.

\begin{corollary} \label{lemma obstruction inclusion}
Let $P=k[W]$, let $I \subset P$ be the ideal of $X$, and let $N^G$ be the vector space defined by \eqref{eq def navecg}. Then there is an inclusion $\iota:\Ext^{1,G}_P(I,P/I) \into N^G$ of finite dimensional vector spaces, and thus the composition of the obstruction map defined by \eqref{ob map} with $\iota$ makes $N^G$ into an obstruction space for the deformation functor $D$.
\end{corollary}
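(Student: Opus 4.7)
The plan is to identify $N^G$ explicitly as a cokernel between finite-dimensional spaces, so that both its finite-dimensionality and the existence of an injection $\iota$ from $\Ext^{1,G}_P(I,P/I)$ become transparent; the obstruction theory then transfers formally. Concretely, I aim to show
\begin{equation*}
N^G \cong \Coker\bigl(d_1 \colon \Hom^G(N_1, P/I) \to \Hom^G(N_2, P/I)\bigr),
\end{equation*}
where $d_1$ is the map from \eqref{eq ext0}. Once this is in hand, $\iota$ is simply the tautological map $\Ker(d_2)/\Im(d_1) \hookrightarrow \Coker(d_1)$ and is obviously injective.

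The identification proceeds by quotienting $\Hom^G_P(P^{n_2},P)$ by the two summands of $\Im(\mu^G)$ in turn. First, the short exact sequence $0 \to I \to P \to P/I \to 0$ combined with exactness of $\Hom^G_P(P^{n_2}, -)$ (which holds because $P^{n_2}$ is free and $G$ is reductive) yields
\begin{equation*}
0 \to \Hom^G_P(P^{n_2}, I) \to \Hom^G_P(P^{n_2}, P) \to \Hom^G(N_2, P/I) \to 0.
\end{equation*}
The key point is that every equivariant map $P^{n_2} \to I$ lifts along $u_0$ to an equivariant map $P^{n_2} \to P^{n_1}$: take any $P$-linear lift by projectivity of $P^{n_2}$, then apply the Reynolds operator, and invoke Corollary \ref{cor reynolds} to see that the resulting equivariant map is still a lift. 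Hence $\Hom^G_P(P^{n_2}, I) = \Im(u_{0*})$, so quotienting by the $U_0 D$-summand of $\Im(\mu^G)$ produces $\Hom^G(N_2, P/I)$. Quotienting further by the image of $C \mapsto CV_0$ then gives exactly $\Coker(d_1)$, again using Reynolds to see that every $\bar C \in \Hom^G(N_1,P/I)$ lifts to some equivariant $C \in \Hom^G_P(P^{n_1}, P)$. Since $\Hom^G(N_i, P/I)$ is finite-dimensional by \eqref{mod covariants} and $\dim_k N_i < \infty$, so is $N^G$.

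With this identification at hand, the transfer of the obstruction theory from $\Ext^{1,G}_P(I,P/I)$ to $N^G$ along $\iota$ is formal: the exactness axiom of Definition \ref{def obstruction theory} is preserved because $(\id_J \otimes \iota)(\ob(\lambda)) = 0$ if and only if $\ob(\lambda) = 0$ by injectivity of $\iota$, and the functoriality square for a morphism of small extensions is obtained by tensoring with $\iota$ the corresponding square for $\Ext^{1,G}$ provided by Proposition \ref{lemma ext1 obstruktionsraum}. I anticipate that the only real technical subtlety will be the bookkeeping across the various Hom-spaces and identifications; no new deformation-theoretic input is required beyond Proposition \ref{lemma ext1 obstruktionsraum}.
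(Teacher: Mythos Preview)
Your argument is correct and rests on the same ingredients as the paper's---the $G$-equivariant presentation, projectivity of $P^{n_i}$, and the Reynolds operator---but you organize them differently. The paper sets up a $3\times 3$ commutative diagram with rows $\Hom^G_P(P^{n_i},-)$ for $-\in\{P/I,\,P,\,P^{n_1}\}$, identifies $\Ext^{1,G}_P(I,P/I)$ as $V/\Im(\mu^G)$ for the subspace $V=E_1^{-1}(\Ker d_2)\subset \Hom^G_P(P^{n_2},P)$, and then reads off the inclusion into $N^G=\Hom^G_P(P^{n_2},P)/\Im(\mu^G)$; finite-dimensionality of $N^G$ is proved separately, before this, by observing that $N^G$ is a finitely generated $P^G$-module supported on the (finite-length) image of $X$ in $W/\!/G$. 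Your route instead quotients $\Hom^G_P(P^{n_2},P)$ by the two summands of $\Im(\mu^G)$ in turn to obtain $N^G\cong\Coker(d_1)$ directly, which makes both the inclusion $\Ker(d_2)/\Im(d_1)\hookrightarrow\Coker(d_1)$ and the finite-dimensionality (via \eqref{mod covariants}) immediate. The two arguments are equivalent once one notices that $\Ker(E_1)=\Im(E_2)\subset\Im(\mu^G)$, so the paper's quotient descends to the same $\Coker(d_1)$; your presentation is slightly more economical, while the paper's support argument has the minor advantage of not requiring the identification of $N^G$ beforehand.
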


\begin{proof}
First note that $\Hom_P^G(P^{n_2},P) \cong \Hom^G(N_2,P)$ is a $P^G$-module of finite type by \cite[Lemma 2.1]{Br10}, and thus so is $N^G$. Besides, it follows from the definition of $\mu$ that $N^G$ is supported on the image of $X=V(I)$ under the projection $\pi :W\to W/\!/G$, which is a scheme of finite length by \eqref{mod covariants}. Hence, $N^G$ is a finite dimensional vector space.

Let us now use the presentation \eqref{eq equivariant resolution} to calculate $\Ext^{1,G}_P(I,P/I)$. After adding a third step $P \otimes N_3\isom P^{n_3}$ to this presentation, we apply the functor $\Hom^{G}_P(\cdot,P/I)$ and obtain a complex
\begin{equation}\label{eq ext}
\Hom^{G}_P(P^{n_1},P/I)  \to[d_1] \Hom^{G}_P(P^{n_2},P/I)  \to[d_2] \Hom^{G}_P(P^{n_3},P/I)  
\end{equation}
such that $\Ext^{1,G}_P(I,P/I) = \Ker( d_2) / \Im (d_1)$.
As $P^{n_i}$ are free as $P$-modules, we get that $\Ext^{1,G}_P(P^{n_i},I)=0$, and thus the sequence \eqref{eq ext} extends to a commutative diagram with exact columns
\begin{equation}
\xymatrix{
0&0&0\\
\Hom^{G}_P(P^{n_1},P/I) \ar[u]\ar[r]^{d_1} & \Hom^{G}_P(P^{n_2},P/I)  \ar[u]\ar[r]^{d_2} & \Hom^{G}_P(P^{n_3},P/I)\ar[u] \\
\Hom^{G}_P(P^{n_1},P) \ar[u]\ar[r]^{D_1} & \Hom^{G}_P(P^{n_2},P)  \ar[u]^{E_1}\ar[r]^{D_2} & \Hom^{G}_P(P^{n_3},P)\ar[u] \\
\Hom^{G}_P(P^{n_1},P^{n_1}) \ar[u]\ar[r] & \Hom^{G}_P(P^{n_2},P^{n_1})  \ar[u]^{E_2}\ar[r] & \Hom^{G}_P(P^{n_3},P^{n_1})\ar[u] \\
}
\end{equation}
Let us consider the preimage $V\subset \Hom^{G}_P(P^{n_2},P)$ of $\Ker(d_2)$ under the surjective map $E_1$. We read off from the diagram that $\Ker(D_2)\subset V$, and that $D_1$ and $E_2$ both map to $V$. 
This immediately yields that
\[
\Ext^{1,G}_P(I,P/I) = \Coker\left(\Hom^{G}_P(P^{n_1},P)\oplus \Hom^{G}_P(P^{n_2},P^{n_1})  \to[D_1+E_2] V\right).
\]
But $D_1$ is given by composition with $V_0$ on the right, and $E_2$ is given by composition with $U_0$ on the left, thus $D_1+E_2$ is nothing else but $\mu^G$; see \eqref{eq obstruktionsraum}. As a consequence
\[
N^G = \Coker\left(\Hom^{G}_P(P^{n_1},P)\oplus \Hom^{G}_P(P^{n_2},P^{n_1})  \to[\mu^G] \Hom^{G}_P(P^{n_2},P) \right)
\]
contains $\Ext^{1,G}_P(I,P/I)$. This completes the proof. 
\end{proof}

\subsection{The obstruction map explicitly}\label{subsec obstruction explicit}
An  obstruction map for $N^G$ is thus obtained by composing the obstruction map defined by \eqref{ob map} with the inclusion $\iota$ of Corollary \ref{lemma obstruction inclusion}. It turns out that the obstruction map associated with $N^G$ is more suitable to do actual computations than the obstruction map given by \eqref{ob map}. Indeed, we can make the obstruction map for the obstruction space $N^G$ more explicit. For simplicity, we will do this only in the case of the universal deformation. With the notation at the end of section \ref{subsec tangent spaces}, this means that we consider a small extension 
\begin{equation}  \label{small extension univ}
 0\to  J\to S/\gothq \to R_n \to 0,
\end{equation}
where $R_n:=S/K_n = \Rch/\mm_{\Rch}^{n+1}$ is the base space of the $n$-th truncation ($n \geq 1$) of the universal deformation 
$$\lambda_n: \XX_n \to \Spec(R_n),$$ 
and $\gothq$ is an ideal of $S$ such that 
$$\mm_{S}^{n+2} \subset \gothq \subset K_n.$$ 

We suppose that we have calculated the universal deformation up to order $n$. In other words, we have generators for the ideal $K_n$, and we have a complex
\begin{equation}\label{eq resolution rn}
P_{R_n}^{n_2} \to[v_n] P_{R_n}^{n_1} \to[u_n] P_{R_n}
\end{equation}
such that $\XX_n=\Spec \left( P_{R_n}/\Im(u_n)\right)$, where $P_{R_n}=P \otimes R_n$ and $\Im(u_n)$ denotes the image of $u_n$.

We represent $u_n$ and $v_n$ by $G$-equivariant matrices
\begin{small}
\begin{equation}  \label{matrices U and V}
U_n=A_0 + \ldots + A_n \in \Hom_P(P^{n_1},P) \otimes S \quad \textrm{and } \quad V_n=B_0 + \ldots + B_n \in \Hom_P(P^{n_2},P^{n_1}) \otimes S
\end{equation}
\end{small}
such that the coefficients of $A_i$ and $B_i$ in $S$ are in $\Sym^i M$, that is, degree $i$ homogeneous polynomials.

We tensorize the sequence induced by the map $\mu^G$ defined by \eqref{eq obstruktionsraum equiv} with $S_{n+1}:=S/\mm_{S}^{n+2}$ to obtain the sequence
\begin{small}
\begin{equation}
S_{n+1} \otimes \left(  \Hom_P^G(P^{n_1},P) \oplus \Hom_P^G(P^{n_2},P^{n_1}) \right) \to S_{n+1} \otimes \Hom_P^G(P^{n_2},P) \to S_{n+1} \otimes N^G.
\end{equation}
\end{small}
Consider the element
\begin{equation}\label{eq obstruktion schritt n}
\gotho_{n+1} := - \sum_{p+q\leq n} A_p B_{q} - \sum_{i=1}^n A_i B_{n+1-i} \in S_{n+1} \otimes \Hom_P^G(P^{n_2},P),
\end{equation}
and denote by $\omega_{n+1}$ its image in $S_{n+1} \otimes N^G$.

\begin{theorem}\label{lemma cle} 
Let $n\geq 1$. With the notation above, the following hold:
\begin{enumerate}

\item\label{lemma cle enum 1} The obstruction map associated with the small extension \eqref{small extension univ} is given by 
$$\begin{array}{cccc}
ob  : &  D(R_n) & \to & J \otimes N^G \\
   &\lambda & \mapsto & (q \otimes Id)(\omega_{n+1}) 
\end{array}$$
where $q:S_{n+1} \to S/\gothq$ is the quotient map.

\item\label{lemma cle enum 2}
There exist $G$-equivariant matrices $A_{n+1}$ and $B_{n+1}$ with coefficients in $\Sym^{n+1} M$ such that $U_{n+1}:=U_n + A_{n+1}$ and $V_{n+1}:=V_n + B_{n+1}$ satisfy $U_{n+1} V_{n+1} \in \Hom_P(P^{n_2},P) \otimes K_{n+1}$. 
Every such couple represents the $(n+1)$-st truncation of the universal deformation $\lambda_{n+1}: \XX_{n+1} \to \Spec(R_{n+1})$.

\item\label{lemma cle enum 3} 
Write $\omega_{n+1}\in S_{n+1}\otimes N^G$ as $\sum_{i \in I} \overline{c_i} \otimes n_i$, where $\{n_i,\ i \in I\}$ is a basis of $N^G$, and each $\overline{c_i} \in S_{n+1}$. Let $c_i \in S$ be an arbitrary lift of $\overline{c_i}$. Then 
$$ K'_{n+1}:=(c_1,c_2,\ldots)+\mm^{n+2}=K_{n+1}.$$

\end{enumerate}
\end{theorem}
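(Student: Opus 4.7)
The plan is to prove the three parts in sequence, with \eqref{lemma cle enum 1} being the technical core and \eqref{lemma cle enum 2}, \eqref{lemma cle enum 3} following by combining \eqref{lemma cle enum 1} with the universal property of $\Rch$.

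For \eqref{lemma cle enum 1}, I would unwind the construction of the obstruction map in the proof of Proposition \ref{lemma ext1 obstruktionsraum}, transported to the obstruction space $N^G$ via Corollary \ref{lemma obstruction inclusion}, for the specific small extension \eqref{small extension univ}. Since the entries of $U_n$ and $V_n$ already lie in $S$, they provide a canonical lift to matrices over $S/\gothq$; any further degree-$(n+1)$ correction $(A_{n+1},B_{n+1})$ contributes an element in the image of $\mu^G$ which is therefore killed by the projection to $N^G$. A direct computation using $\mm_S^{n+2}\subseteq \gothq$ shows that
\[
(U_n+A_{n+1})(V_n+B_{n+1}) \equiv -\gotho_{n+1} + \mu^G(A_{n+1},B_{n+1}) \pmod{\gothq}
\]
in $\Hom^G_P(P^{n_2},P)\otimes (S/\gothq)$, so projecting to $N^G$ yields the obstruction $\pm(q\otimes\id)(\omega_{n+1})$, independent of the chosen lift.

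For \eqref{lemma cle enum 2}, the existence of suitable degree-$(n+1)$ matrices follows from \eqref{lemma cle enum 1} applied with $\gothq=K_{n+1}$: the truncation $\lambda_{n+1}$ of the universal deformation exists, so the obstruction vanishes, and reversing the construction (using $\mm_S^{n+2}\subseteq K_{n+1}$ to trim higher-degree terms in any chosen lift) produces equivariant $(A_{n+1},B_{n+1})$ with entries in $\Sym^{n+1}M$ satisfying $U_{n+1}V_{n+1}\in\Hom_P(P^{n_2},P)\otimes K_{n+1}$. For the second assertion, Theorem \ref{eqMat}\eqref{thm enum lifting relations} shows that any such pair defines a flat $G$-stable family over $R_{n+1}$ extending $\XX_n$; by universality this family is classified by a local endomorphism of $R_{n+1}$ which restricts to the identity on the cotangent space $M$, hence is an automorphism by Lemma \ref{lemma fact}, and so the family is isomorphic to $\lambda_{n+1}$.

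For \eqref{lemma cle enum 3}, the inclusion $K'_{n+1}\subseteq K_{n+1}$ is immediate from \eqref{lemma cle enum 1}: vanishing of the obstruction at $\gothq=K_{n+1}$, combined with linear independence of the basis $\{n_i\}$ of $N^G$, forces $c_i\in K_{n+1}$. For the reverse inclusion I would apply \eqref{lemma cle enum 1} with $\gothq=K'_{n+1}$: by construction the obstruction vanishes, so the argument of \eqref{lemma cle enum 2} yields a flat $G$-stable extension $\XX''$ of $\lambda_n$ over $S/K'_{n+1}$. Classifying $\XX''$ by the universal property gives a local homomorphism $\phi\colon R_{n+1}\to S/K'_{n+1}$ which is the identity on cotangent spaces (both $\XX''$ and $\lambda_{n+1}$ restrict to the unique first-order deformation), hence surjective by Lemma \ref{lemma fact}. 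Combined with the canonical surjection $S/K'_{n+1}\twoheadrightarrow R_{n+1}$ arising from $K'_{n+1}\subseteq K_{n+1}$, comparing $k$-dimensions of these finite-length local rings forces both surjections to be isomorphisms and yields $K_{n+1}=K'_{n+1}$.

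The main obstacle I anticipate is the bookkeeping in \eqref{lemma cle enum 1}: one must carefully track sign conventions, the degree decomposition of $U_nV_n$ modulo $\mm_S^{n+2}$, and the fact that the ambiguity in the degree-$(n+1)$ lift is absorbed precisely by the projection $\Hom^G_P(P^{n_2},P)\to N^G$. Once \eqref{lemma cle enum 1} is cleanly set up, \eqref{lemma cle enum 2} and \eqref{lemma cle enum 3} follow as essentially formal consequences of the universal property of $\Rch$ and Lemma \ref{lemma fact}.
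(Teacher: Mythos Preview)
Your proposal is correct and follows essentially the same route as the paper: part \eqref{lemma cle enum 1} unwinds the obstruction construction of Proposition \ref{lemma ext1 obstruktionsraum} using $U_n,V_n$ as the lift, and parts \eqref{lemma cle enum 2} and \eqref{lemma cle enum 3} are deduced from the universal property together with Lemma \ref{lemma fact}. The only cosmetic differences are that in \eqref{lemma cle enum 2} the paper obtains the matrices $A_{n+1},B_{n+1}$ by invoking Theorem \ref{eqMat} directly (rather than ``reversing'' the obstruction argument) and then trimming with $K_n=K_{n+1}+\mm_S^{n+1}$, and in \eqref{lemma cle enum 3} the paper applies Lemma \ref{lemma fact} to the composite $q\circ\phi$ to get injectivity of $\phi$, whereas you apply it to $\phi$ itself to get surjectivity; both conclude by the same dimension count.
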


\begin{proof}

ad \eqref{lemma cle enum 1}: 
Let us check that $(q \otimes Id)(\omega_{n+1})$ coincides with the element $ob(\lambda)$ calculated in the proof of Proposition \ref{lemma ext1 obstruktionsraum}. We choose the morphisms $(u',v')$ in the middle row of Diagram \ref{lemma ext1 obstruktionsraum} to be residue classes of the matrices  $U_n$ and $V_n$ modulo $\gothq$. Then $\ob(\lambda)$ is represented by the image of $U_n V_n$ in $N^G \otimes (S/\gothq)$ which indeed up to a sign coincides with $(q \otimes Id)(\omega_{n+1})$.

ad \eqref{lemma cle enum 2}: 
By assumption $(U_n,V_n)$ represents the $n$-th truncation of the universal deformation $\lambda_n$. By Theorem \ref{eqMat}, as $\lambda_n$ is the restriction of $\lambda_{n+1}$, the sequence \eqref{eq resolution rn} lifts to $R_{n+1}$, that is, there exist $G$-equivariant matrices $U'_{n+1}\in \Hom_P(P^{n_1},P) \otimes S $ and $V'_{n+1}\in  \Hom_P(P^{n_2},P^{n_1}) \otimes S$ representing $\lambda_{n+1}$. Because of 
\[
U'_{n+1}\otimes R_n = u_n = U_{n}\otimes R_n \textrm{ and } V'_{n+1}\otimes R_n = v_n = V_{n}\otimes R_n,
\]
we may write
\[
U'_{n+1} - U_{n} = \kappa_1 + A_{n+1} \textrm{ and } V'_{n+1}-  V_{n} = \kappa_2 + B_{n+1},
\]
where the $\kappa_i$ have coefficients in $K_{n+1}$, and $A_{n+1}$ and $B_{n+1}$ have coefficients in $\Sym^{n+1}M$. Here we used that $K_n=K_{n+1}+\mm_S^{n+1}$. We may furthermore suppose that $\kappa_i$, $A_{n+1}$, and $B_{n+1}$ are $G$-equivariant by applying the Reynolds operator. So $U_{n+1}:= U'_{n+1}-\kappa_1$ and $V_{n+1}:= V'_{n+1}-\kappa_2$ satisfy $U_{n+1} V_{n+1} \in \Hom_P(P^{n_2},P) \otimes K_{n+1}$.

Let $I_{\ZZ_{n+1}}\subset P_{R_{n+1}}$ be the image of the map $P_{R_{n+1}}^{n_1}\to P_{R_{n+1}}$ induced by $U_{n+1}$. Then the matrices $U_{n+1}$ and $V_{n+1}$ determine an extension of \eqref{eq resolution rn} over $R_{n+1}$ so that $P_{R_{n+1}}/I_{\ZZ_{n+1}}$ is flat over $R_{n+1}$ by item \eqref{thm enum lifting relations} of Theorem \ref{eqMat}. It then remains to show that any two extensions of the $n$-th truncation of the universal deformation over $R_{n+1}$ are isomorphic. Thinking in terms of classifying morphisms we have to show that a morphism $\varphi: R_{n+1}\to R_{n+1}$ which induces the identity on $R_n$ is an isomorphism. As $n \geq 1$, the result follows from Lemma \ref{lemma fact}.

ad \eqref{lemma cle enum 3}: 
From \eqref{lemma cle enum 1} we obtain that $\omega_{n+1} \otimes_S R_{n+1}=0$ so that $K'_{n+1} \subset K_{n+1}$. By the same argument as in \eqref{lemma cle enum 2}, we can lift $\lambda_n \in D(R_n)$ to some $\lambda'_{n+1} \in D(S/K'_{n+1})$ because $\omega_{n+1}$ vanishes modulo $K'_{n+1}$. This endows us with a morphism $\phi:R_{n+1} \to S/K'_{n+1}$ induced by the classifying morphism $\Rch \to S/K'_{n+1}$. Then, as the $(n+1)$-st truncation of the universal deformation is the biggest $(n+1)$-st order deformation of $X$ with tangent space of dimension $\dim (T^1)$, we obtain the inclusion $K_{n+1}\subset K'_{n+1}$. To see this last claim, consider the commutative diagram
\begin{equation}\label{eq finally}
\xymatrix{
R_{n+1} \ar[r]^{\phi} \ar[d] &S/K'_{n+1} \ar[r]^(0.4){q} & R_{n+1}=S/K_{n+1} \ar[d]\\
R_{n} \ar[rr]_{\id} & &  R_n\\
}
\end{equation}
where $q$ and the unlabeled arrows are the canonical quotient maps, and $\phi$ is the map given by the universal property of $\Rch$.

Since $q \circ \phi$ induces an automorphism on the cotangent space, it follows from Lemma \ref{lemma fact} that $q \circ \phi$ is an automorphism. In particular, $\phi$ is injective, and thus $\dim(S/K_{n+1}) \leq \dim(S/K'_{n+1})$. As $K_{n+1}\subset K'_{n+1}$ and both contain $\mm^{n+2}$, we get that $K_{n+1}=K'_{n+1}$.
\end{proof}

\begin{remark} ---

\begin{itemize}
\item Theorem \ref{lemma cle} suggests a way to compute the ideal $K_n$ for any $n$; see section \ref{section algorithm} for an algorithmic description. Notice however that here the modus operandi is somewhat different to what we do in theory. Theoretically, we would pick an ideal $\gothq$ which is far bigger than $\mm_S^{n+2}$ (e.g. $\gothq=\mm_S.K_n$), and let the obstruction theory produce additional equations $f_1,f_2,\ldots$ In practice, we let an adapted version of obstruction theory take us to the ideal directly.

\item The search for the matrices $A_{n+1}\in \Hom_P(P^{n_1},P) \otimes \Sym^{n+1} M$ and $B_{n+1}\in \Hom_P(P^{n_2},P^{n_1}) \otimes \Sym^{n+1} M$ in the proof of Theorem \ref{lemma cle} \eqref{lemma cle enum 2} corresponds to the search for the elements $\xi$ and $\delta$ in the proof of Proposition \ref{lemma ext1 obstruktionsraum}. 
\end{itemize}
\end{remark}

Strictly speaking we do not need the obstruction space $\Ext^{1,G}_P(I,P/I)$ of Proposition \ref{lemma ext1 obstruktionsraum} as we handle the obstruction theory directly in our space $N^G$. However, the construction of the latter is strongly motivated by the former, which becomes clear in the proof of Corollary \ref{lemma obstruction inclusion}, and so we considered it worthwhile to include it. Furthermore, it might also be relevant to practice. There are examples with $\Ext^{1,G}_P(I,P/I)=0$ where $N^G\neq 0$ thus furnishing a way to prove unobstructedness results. Such a situation occurs for instance in \cite{CF}.

\section{The case of an extra \texorpdfstring{$\Gm$}{Gm}-action}\label{subsec gm action}

Theorem \ref{lemma cle} suggests an algorithm to calculate the truncation of the universal deformation of a point  $[X]\in \HH$ up to arbitrarily high order. It will be described in section \ref{section algorithm}. In general, this procedure will never stop as the ideal $\hat K$ defined by \eqref{eq def khut} may be not generated by polynomials. This is different in the presence of an extra $\Gm$-action with strictly positive weights; see Theorem \ref{thm open immersion}. Such an action has even more important practical consequences, namely it guarantees that our algorithm stops; see section \ref{subsec stop} for the stop condition.

In the setting of section \ref{subsec setup}, we make the following extra hypothesis on the point $[X]\in \HH$ under consideration:
\begin{hypo}\label{hypo weights} 
There is a subgroup of $\Aut^G(W)$, isomorphic to the multiplicative group $\Gm$, such that:
\begin{itemize}
\item $\Gm$ acts on $P=k[W]$ with strictly positive weights; and
\item $[X] \in \HH$ is a $\Gm$-fixed point, and the induced action on the cotangent space $(T^1)^\vee=(T_{[X]} \HH)^\vee$ has strictly positive weights.
\end{itemize}
\end{hypo}

\begin{lemma}\label{rmk equivariant}
Under Hypothesis \ref{hypo weights}, the morphism $\psi:\Shut \to \Rch$ of Lemma \ref{eq choice} can be chosen $\Gm$-equivariant. From now on, we will assume that this is the case. Then the ideal $\hat K=\ker (\psi)$ is generated by weight vectors for the $\Gm$-action on $S$.
\end{lemma}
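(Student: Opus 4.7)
The plan is to construct the $\Gm$-equivariant lift $\psi$ by lifting a weight basis of $M$, and then deduce the generator statement from linear reductivity of $\Gm$ together with Nakayama's lemma on the Noetherian ring $\Shut$.

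First I would set up the relevant weight decompositions. Let $B$ be the coordinate ring of an affine $\Gm$-stable neighborhood of $[X]$ in $\HH$. Since $[X]$ is a $\Gm$-fixed point, its maximal ideal $\mm_{[X]} \subset B$ is $\Gm$-stable, and so are all its powers. Each finite-dimensional quotient $B/\mm_{[X]}^{n}$ is a rational $\Gm$-module, and decomposes into weight spaces. Because $\mm_{[X]}/\mm_{[X]}^{2} \cong M$ has only strictly positive weights, and $\mm_{[X]}^{i}/\mm_{[X]}^{i+1}$ is a quotient of $\Sym^{i}\bigl(\mm_{[X]}/\mm_{[X]}^{2}\bigr)$, all weights appearing in $\mm_{[X]}/\mm_{[X]}^{n}$ are strictly positive. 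Passing to the inverse limit, $\Rch=\varprojlim_n B/\mm_{[X]}^{n}$ inherits a weight decomposition compatible with the $\mm_\Rch$-adic topology.

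For the construction of $\psi$, choose a basis $t_1,\ldots,t_d$ of $M$ consisting of weight vectors, with weights $w_1,\ldots,w_d > 0$. I need to lift each $t_i$ to a weight vector $y_i \in \mm_{\Rch}$ of weight $w_i$. At each finite level the $\Gm$-equivariant surjection $\mm_{[X]}/\mm_{[X]}^{n+1} \twoheadrightarrow \mm_{[X]}/\mm_{[X]}^{n}$ restricts to surjections between weight-$w_i$ subspaces by linear reductivity. Since each weight-$w_i$ subspace in $\mm_{[X]}/\mm_{[X]}^{n}$ is finite dimensional, an inductive choice produces a compatible system in the inverse limit, yielding $y_i \in \mm_\Rch$ of weight $w_i$ lifting $t_i$. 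Since $\Shut$ is endowed with the $\Gm$-action coming from $M$ via $\Sym^{\bullet}M$, the map $\psi:\Shut\to\Rch$ sending $t_i\mapsto y_i$ is $\Gm$-equivariant.

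For the second assertion, observe that $\hat K=\ker\psi$ is a $\Gm$-stable closed ideal of $\Shut$. Since $\Shut$ is Noetherian (completion of a Noetherian local ring), $\hat K$ is finitely generated, hence $\hat K/\mm_{\Shut}\hat K$ is a finite-dimensional $k$-vector space. The $\Gm$-stability of $\hat K$ and of $\mm_{\Shut}\hat K$ makes $\hat K/\mm_{\Shut}\hat K$ into a rational finite-dimensional $\Gm$-module, which therefore decomposes into weight spaces. Choose a basis of $\hat K/\mm_{\Shut}\hat K$ consisting of weight vectors and lift each to a weight vector of the same weight in $\hat K$; this is again possible because the $\Gm$-equivariant surjection $\hat K \twoheadrightarrow \hat K/\mm_\Shut\hat K$, viewed on each weight component (which is finite dimensional thanks to positivity of the weights on $M$), remains surjective by reductivity. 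By the completed Nakayama lemma (applicable since $\hat K \subset \mm_{\Shut}^{2}$ and $\Shut$ is a complete Noetherian local ring), these weight vectors generate $\hat K$.

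The main obstacle will be the careful handling of the weight decomposition in the complete setting: one must verify that weight components of elements of $\hat K$ again lie in $\hat K$ despite the fact that the $\Gm$-action on $\Shut$ is not rational in the usual sense. This is ensured by the positivity of the weights on $M$, which makes the weight filtration refine the $\mm_{\Shut}$-adic filtration and allows all arguments to be carried out first modulo $\mm_{\Shut}^{n+1}$ and then lifted consistently.
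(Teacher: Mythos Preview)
Your proposal is correct and follows essentially the same route as the paper: choose weight-vector lifts $y_i$ of a weight basis $t_i$ of $M$ to make $\psi$ equivariant, then apply Nakayama to $\hat K/\mm_{\Shut}\hat K$ and lift a weight basis to weight-vector generators of $\hat K$. The paper's proof is terser and takes for granted the existence of the weight decomposition in $\Rch$ and the lifting of weight vectors, whereas you spell out the inverse-limit argument; conversely, the paper makes explicit the punchline that positivity forces each weight-vector generator to lie in $\Sym^{\leq N}M\subset S$ (this is what ``weight vectors for the $\Gm$-action on $S$'' means in the statement), which you have only implicitly via the remark that weight components are finite dimensional.
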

\begin{proof}
Recall that $\psi$ depends on the choice of $y_1,\ldots, y_d \in \mm_{\Rch}$; we may choose the $y_i$ to be weight vectors for the $\Gm$-action on $\Rch$. Then so are their images $t_1,\ldots,t_d \in \mm/\mm^2 \cong M$, and thus the map $\psi: t_i \mapsto y_i$ is $\Gm$-equivariant. This implies that the ideal $\hat K=\Ker(\psi)$ is $\Gm$-stable. We take weight vectors $f_1, \ldots, f_k \in \hat K$ such that their images generate the finite dimensional vector space $\hat K/\mm \hat K$. By Nakayama's Lemma, the $f_i$ generate $\hat K$ and as $\Gm$ acts with strictly positive weights on $M$, all $f_i$ are contained in $\Sym^{\leq N}M$ for some $N\gg 0$. See \cite[Lemma A.4]{Na08} for a constructive proof.
\end{proof}

Denoting $K:=\hat K \cap S$, we see that the base space of the universal deformation $\XX \to \Spec(\Shut/\hat K)$ is the completion of a finite type scheme, namely that of $\Spec(S/ K)$. We knew this before: it is also the completion of an affine neighborhood of $[X]$ in $\HH$, which has already been used in the proof of Lemma \ref{repD}. But the algebraization coming from deformation theory can be calculated algorithmically; see section \ref{section algorithm}. 
The next result tells us that we can use deformation theory to calculate also an algebraization of the universal deformation.

\begin{lemma} \label{lem homogeneous}
In the setting of section \ref{subsec setup} and under Hypothesis \ref{hypo weights}, there exist $\alpha \geq 1$ and $G\times \Gm$-equivariant matrices 
$$(U_{\alpha}, V_{\alpha}) \in ( \Hom_P(P^{n_1},P) \otimes \Sym^{\leq \alpha}M) \times ( \Hom_P(P^{n_2},P^{n_1}) \otimes \Sym^{\leq \alpha}M) $$ 
which represent the universal deformation $\XX \to \Spec(\Shut/\hat K)$.
\end{lemma}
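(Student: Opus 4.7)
The plan is to run the inductive construction of Theorem \ref{lemma cle}\eqref{lemma cle enum 2} in a $\Gm$-equivariant fashion and to use the strict positivity of weights to show that the procedure stabilizes after finitely many steps. To set up the equivariant data, note first that since $\Gm$ acts on $P$ with strictly positive weights, any nonempty $\Gm$-stable closed subscheme of $W$ contains the vertex defined by $P_{>0}$; hence the ideal $I$ of the $\Gm$-fixed point $[X]\in\HH$ is $\Gm$-stable and contained in $P_{>0}$. I would therefore choose the presentation \eqref{eq equivariant resolution} so that $N_1\subset I$ and $N_2\subset P\otimes N_1$ are additionally $\Gm$-stable. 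Fixing $\Gm$-eigenbases yields weights $a_1,\dots,a_{n_1}\geq 1$ for $N_1$ and $b_1,\dots,b_{n_2}\geq 1$ for $N_2$, in which $U_0$ and $V_0$ become $G\times\Gm$-equivariant matrices. Invoking Hypothesis \ref{hypo weights} I would also choose the elements $y_1,\dots,y_d$ of \eqref{eq choice of yi} to be $\Gm$-eigenvectors, so that the basis $t_1,\dots,t_d$ of $M$ consists of $\Gm$-eigenvectors of strictly positive weight; consequently the $\Gm$-weights on $\Sym^{d}M$ are all $\geq d$.

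I would then perform the inductive step of Theorem \ref{lemma cle}\eqref{lemma cle enum 2} while tracking the $\Gm$-action. Since all input data is $\Gm$-equivariant and $\Gm$ is reductive, at each step I would apply both the $G$- and the $\Gm$-Reynolds operators (which commute, as the two actions commute) to the matrices produced by obstruction theory, so that $A_{n+1}$ and $B_{n+1}$ are $G\times\Gm$-equivariant. The crucial argument, and the main obstacle of the proof, is then a weight count. The $j$-th entry of $A_{n+1}\in\Hom_P(P^{n_1},P)\otimes\Sym^{n+1}M$ must be a $\Gm$-eigenvector of weight $a_j$; but $P$ has weights $\geq 0$ and $\Sym^{n+1}M$ has weights $\geq n+1$, so this weight space vanishes as soon as $n+1>a_j$. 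Hence $A_{n+1}=0$ once $n+1>\alpha_1:=\max_j a_j$. An analogous count for $B_{n+1}$, whose $(j,i)$-entry carries $\Gm$-weight $b_i-a_j$, gives $B_{n+1}=0$ once $n+1>\alpha_2:=\max_{i,j}(b_i-a_j)$. Setting $\alpha:=\max(\alpha_1,\alpha_2)\geq 1$, one then has $(U_n,V_n)=(U_\alpha,V_\alpha)$ for every $n\geq\alpha$, and the entries of these matrices lie in $P\otimes\Sym^{\leq\alpha}M$.

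It remains to identify $(U_\alpha,V_\alpha)$ with the universal deformation over $\Spec(\Shut/\hat K)$. By Theorem \ref{lemma cle}\eqref{lemma cle enum 2}, the product $U_\alpha V_\alpha$ lies in $P^{n_2}\otimes K_n$ for every $n\geq\alpha$, hence in $P^{n_2}\otimes\bigcap_n K_n = P^{n_2}\otimes(\hat K\cap S)\subset P^{n_2}\otimes\hat K$, where the equality follows from Krull's intersection theorem applied in the Noetherian local ring $\Shut/\hat K$. The implication \eqref{thm enum lifting relations}$\Rightarrow$\eqref{ffllaatt} of Theorem \ref{eqMat} applied to each Artinian truncation $R_n$ then shows that $(U_\alpha,V_\alpha)$ represents a flat $G$-stable family over $R_n$ whose $n$-th truncation coincides with $\lambda_n$; passing to the inverse limit one concludes that $(U_\alpha,V_\alpha)$ represents the universal deformation $\XX\to\Spec(\Shut/\hat K)$.
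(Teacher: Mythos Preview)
Your proof is correct and follows essentially the same strategy as the paper: choose a $G\times\Gm$-equivariant presentation, run the inductive construction of Theorem~\ref{lemma cle}\eqref{lemma cle enum 2} equivariantly, and use the weight count to force $A_{n+1}=B_{n+1}=0$ once $n+1$ exceeds the maximal relevant weight. Your bounds $\alpha_1=\max_j a_j$ and $\alpha_2=\max_{i,j}(b_i-a_j)$ agree with the paper's (the latter is the paper's $\max_{p,q}\weight(f_p^*\otimes r_q)$). The only differences are cosmetic: the paper obtains $\Gm$-equivariance of $(U_n,V_n)$ by invoking Proposition~\ref{groupaction} (the universal family is $\Gm$-equivariant) rather than by explicitly applying the $\Gm$-Reynolds operator, and it absorbs your final paragraph into the phrase ``whence the result'', whereas you spell out the Krull intersection $\bigcap_n K_n=\hat K\cap S$ and the passage to the inverse limit.
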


\begin{proof}

It follows from Hypothesis \ref{hypo weights}  and Proposition \ref{groupaction} that the universal deformation is $\Gm$-equivariant. Hence, for any $n \geq 0$, the $n$-th truncation of the universal deformation is also $\Gm$-equivariant. By Theorem \ref{lemma cle}, there exist $(G\times \Gm)$-equivariant matrices $(U_n,V_n)$ with coefficients in $\Sym^{\leq n}M$ such that $U_n V_n \in \Hom_P(P^{n_2},P) \otimes K_{n}$. If we decompose $U_n$ and $V_n$ into graded pieces for $S=\Sym^\bullet M$ as in \eqref{matrices U and V}, this amounts to saying that all the $A_k$ and $B_k$ are $(G \times \Gm)$-equivariant. Moreover, the $G$-modules $N_1$ and $N_2$ defined at the beginning of section \ref{G eq reso} may be chosen $\Gm$-stable. Let us fix bases $\{f_1,\ldots,f_{n_1}\}$ and $\{r_1,\ldots,r_{n_2}\}$ of $N_1$ and $N_2$ respectively such that each $f_i$ respectively each $r_j$ is a weight vector for the $\Gm$-action. Moreover, by Lemma \ref{rmk equivariant} the system of parameters $t_1,\ldots,t_d \in M$ of $S$ is chosen to consist of weight vectors as well. Then the matrices $A_k$ are $\Gm$-equivariant if and only if for every $1 \leq i \leq n_1$ the $i$-th entry of $A_k$ is either $0$ or a weight vector for the $\Gm$-action with the same weight as that of $f_i$. In particular, as the weight of of the $t_i$-monomials in $A_k$ increases, the weight of the $P$ variables has to decrease. As the coefficients of $A_n$ belong to $P \otimes \Sym^k M$, Hypothesis \ref{hypo weights}  implies that 
$$n \leq \text{weight of any non-zero coefficient of $A_n$} \leq \alpha_1:=\max_{l=1,\ldots,n_1} \weight(f_l).$$  
Arguing similarly for $B_n$, we also obtain that
$$n \leq \text{weight of any non-zero coefficient of $B_n$} \leq \alpha_2:=\max_{\substack{p=1,\ldots, n_1\\ q=1,\ldots, n_2 }} \weight(f^*_p \otimes r_q),$$
where $\{f_1^*,\ldots,f_{n_1}^*\}$ stands for the dual basis to $\{f_1,\ldots,f_{n_1}\}$.  
Consequently, for every $n > \alpha:=\max(\alpha_1,\alpha_2)$, we have $A_n=B_n=0$, whence the result.
\end{proof}

\begin{remark}
The proof of Lemma \ref{lem homogeneous} gives an explicit bound for $\alpha$. Moreover, Lemma \ref{rmk equivariant} and Theorem \ref{lemma cle} imply that the ideal $K$ is generated by weight vectors whose weight is bounded by $2 \alpha w_0$, where $w_0$ denotes the maximal weight of $(T^1)^\vee$ for the $\Gm$-action.
\end{remark}

Let us denote by $U$ and $V$ the matrices $U_{\alpha}$ and $V_{\alpha}$ of Lemma \ref{lem homogeneous}; these matrices have coefficients in $S$ and satisfy $U  V = 0$ mod $K$. Hence, denoting $\overline{U} \in \Hom_P(P^{n_1},P) \otimes (S/K)$ and $\overline{V} \in \Hom_P(P^{n_2},P^{n_1}) \otimes (S/K)$ the residue classes of  $U$ and $V$, we get that $\overline{U} \overline{V}=0$. The family 
\[
\ZZ:=\Spec \left( (P \otimes (S/K))/\left\langle \Im (\overline{U})\right\rangle  \right) \to [\phi] E:=\Spec(S/K)
\]
is flat over $0 \in E$ by construction. Hence, $\phi$ is flat over an open subset containing $0$ by general theory, and the $\Gm$-action allows to deduce flatness everywhere. Then it follows from the universal property of the invariant Hilbert scheme that there exists a $\Gm$-equivariant diagram
\begin{equation} \label{def_iota}
\xymatrix{
\ZZ \ar[r]\ar[d]_\phi & \UU \ar[d]\\
 E\ar[r]^\iota & \HH\\
 }
 \end{equation}
such that $\ZZ=\UU \times_{\HH} E$, where $\UU \to \HH$ is the universal family.  

We want to show that $\iota$ is an open immersion, and that hence $\ZZ$ is the restriction of the universal family. For this  will need the following
 
\begin{lemma}\label{lemma namikawa}
Let $A=\bigoplus_{n\geq 0} A_n$ be a graded noetherian $k$-algebra with $A_0=k$. Let $\mm=\bigoplus_{n\geq 1} A_n$ be the irrelevant maximal ideal. Then the completion of $A$ at $\mm$ is
\[
\widehat A :=  \varprojlim_n A/\mm^n = \prod_{n \geq 0} A_n =: A'.
\]
In particular, $A$ is the subalgebra of $\hat{A}$ generated by the homogeneous elements. 
\end{lemma}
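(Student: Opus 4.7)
The plan is to show that the $\mm$-adic filtration $(\mm^n)_n$ on $A$ is cofinal with the \emph{degree filtration} $(A_{\geq N})_N$, where $A_{\geq N} := \bigoplus_{k \geq N} A_k$. Once this is established, the identification of $\widehat{A}$ with $\prod_{n \geq 0} A_n$ follows immediately from the definition of the inverse limit.

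First, I would invoke the Noetherian hypothesis. Since $A$ is a graded Noetherian $k$-algebra with $A_0 = k$, the irrelevant ideal $\mm = \bigoplus_{n \geq 1} A_n$ is finitely generated, and one may choose homogeneous generators $x_1, \ldots, x_r$ of positive degrees $d_1, \ldots, d_r$. Put $D := \max_i d_i$.

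Next, I would establish two inclusions. On the one hand, $\mm^n \subseteq A_{\geq n}$, because every $n$-fold product of elements of $\mm$ is a sum of homogeneous elements of degree $\geq n$. On the other hand, $A_{\geq nD} \subseteq \mm^n$: a homogeneous element of degree $\geq nD$ is a $k$-linear combination of monomials $x_1^{a_1} \cdots x_r^{a_r}$ with $\sum_i a_i d_i \geq nD$, and since each $d_i \leq D$, this forces $\sum_i a_i \geq n$, so each such monomial lies in $\mm^n$. Hence $A_{\geq nD} \subseteq \mm^n \subseteq A_{\geq n}$, and the two filtrations are cofinal.

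Therefore
\[
\widehat{A} \;=\; \varprojlim_n A/\mm^n \;=\; \varprojlim_N A/A_{\geq N} \;=\; \varprojlim_N \bigoplus_{k=0}^{N-1} A_k \;=\; \prod_{k \geq 0} A_k,
\]
the last equality being the observation that a compatible family of truncations is the same datum as an independent choice of element $a_k \in A_k$ for each $k$. For the final assertion, under the resulting inclusion $A \hookrightarrow \widehat{A} = \prod_{k \geq 0} A_k$, the ring $A = \bigoplus_{k \geq 0} A_k$ corresponds exactly to the sequences with finitely many non-zero entries, i.e.\ to finite $k$-linear combinations of homogeneous elements of $\widehat{A}$. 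No step here is a serious obstacle; the only real content is the cofinality of the two filtrations, which rests entirely on the finite generation of $\mm$ granted by the Noetherian hypothesis.
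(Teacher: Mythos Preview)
Your proof is correct, but it takes a different route from the paper's. You establish full cofinality of the $\mm$-adic filtration and the degree filtration $(A_{\geq N})_N$ by proving \emph{two} inclusions: the easy one $\mm^n \subseteq A_{\geq n}$ and the reverse $A_{\geq nD} \subseteq \mm^n$, the latter resting on the Noetherian hypothesis via a finite set of homogeneous algebra generators. The identification $\widehat A = \prod_k A_k$ then drops out from the general principle that cofinal filtrations yield the same completion.

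The paper, by contrast, uses only the single inclusion $\mm^n \subset A_{\geq n}$. It writes down a map $A' = \prod_k A_k \to \widehat A$ directly (via $A'/(\mm')^n = A/\mm^n$) and then checks bijectivity by hand: for surjectivity it takes $f = (\bar f_n)_n \in \widehat A$, lifts to $f_n \in A$, decomposes into graded pieces $f_n^d$, and uses $\mm^n \subset A_{\geq n}$ to show that $f_n^d$ stabilises in $n$ for each fixed $d$, producing a preimage $(f^0, f^1, \ldots) \in A'$; injectivity is checked on each graded piece $A_n$ separately. Your argument is more conceptual and makes transparent exactly where Noetherianness enters; the paper's is more hands-on and, taken at face value, appears to use only the trivial inclusion $\mm^n \subset A_{\geq n}$ together with the identity $A'/(\mm')^n = A/\mm^n$.
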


\begin{proof}
Consider the maximal ideal $\mm':=\prod_{n \geq 1} A_n$ of $A'$. In virtue of the equality $A'/(\mm')^n=A/\mm^n$, the universal property of the inverse limit endows us with a map $A'\to \widehat A$. We will argue that it is bijective. The definition of $\mm$ implies that $\mm^k\subset A_{\geq k}:=\bigoplus_{n\geq k} A_n$. 
Take some $f=(\bar f_n)_{n\in \NN}\in \widehat A$ where $\bar f_n\in A/\mm^n$ and denote by $f_n \in A$ an arbitrary lift of $\bar f_n$ to $A$. We decompose it in graded pieces 
$$ f_n = f_n^0 + f_n^1 + f_n^2 + \ldots $$
with $f_n^d\in A_d$. As $f_n - f_{n+m}$ is contained in $\mm^n \subset A_{\geq n}$ for all $m$ we have that $f_n^d=f_{n+m}^d$ for all $d<n$. If we denote $f^d:=f_{d+1}^d=f_{d+2}^d=f_{d+3}^d=\ldots$, then the tuple $(f^0, f^1,\ldots)\in A'$ is a preimage of $f$, so we get the surjectivity. 

A morphism to $\Ach = \varprojlim_k A/\mm^k$ is certainly injective if one of the induced maps to the $A/\mm^k$ is. A homogeneous morphism with source $A'$ is injective if it is so on each graded piece $A_n$. Injectivity is thus a consequence of the fact that $A_n\to \widehat A \to A/\mm^{n+1}$ is injective, where we again used that $\mm^{n+1}\subset A_{\geq n+1}$. The last statement of the lemma is clear.
\end{proof}

\begin{remark}
Lemma \ref{lemma namikawa} is the converse of \cite[Lemma A.2]{Na08}, but much easier. In general, our presentation and arguments have been inspired by \cite[Appendix A]{Na08}.
\end{remark}

\begin{theorem} \label{thm open immersion}
The morphism $\iota$ defined by \eqref{def_iota} is an open immersion.
\end{theorem}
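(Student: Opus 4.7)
The plan is to prove that $\iota$ is étale by a completion argument and then that it is a monomorphism using the $\Gm$-action; the two properties together give the open immersion, since a flat monomorphism locally of finite presentation is an open immersion.

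First, I would verify that $\iota$ induces an isomorphism on complete local rings at $0 \in E$ and $[X] \in \HH$. On the target, $\widehat{\OO_{\HH,[X]}} = \Rch$ by \eqref{eq def R}. On the source, $S/K$ is a positively $\Gm$-graded $k$-algebra (since $S = \Sym^\bullet M$ is positively graded by Hypothesis \ref{hypo weights}, and $K$ is graded by Lemma \ref{rmk equivariant}) with degree-zero part $k$, so Lemma \ref{lemma namikawa} yields $\widehat{\OO_{E,0}} = \widehat{S/K} = \Shut/\hat K = \Rch$. Unwinding the definitions, the map on complete local rings induced by $\iota$ is, through these identifications, the identity of $\Rch$, since $\iota$ is by construction the classifying morphism of the family $\ZZ \to E$, whose restriction to the formal neighborhood of $0$ is the universal deformation itself. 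In particular $\iota$ is étale at $0$; étaleness being open and $\iota$ being $\Gm$-equivariant, the étale locus is a $\Gm$-stable open of $E$ containing $0$, and the positivity of weights on $S/K$ forces any such open to be all of $E$.

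Second, I would show that $\iota$ is a monomorphism by proving that its relative diagonal $\Delta: E \hookrightarrow E \times_\HH E$ is surjective. Since $\iota$ is étale, $\Delta$ is automatically an open immersion, so surjectivity is the only remaining point. The fibre product $E \times_\HH E$ is a $\Gm$-stable closed subscheme of $E \times E$ (by $\Gm$-equivariance of $\iota$ and the diagonal $\Gm$-action on $E \times E$), and the latter has positive weights with $(0,0)$ as its unique fixed point. If the closed complement $W = (E \times_\HH E) \setminus \Delta(E)$ were non-empty, then by positivity of weights the closure of any $\Gm$-orbit in $W$ would contain $(0,0) = \Delta(0)$; by $\Gm$-stability and closedness of $W$, this forces $(0,0) \in W \cap \Delta(E) = \emptyset$, a contradiction. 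Hence $\Delta$ is an isomorphism, $\iota$ is a monomorphism, and consequently an open immersion.

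The main obstacle lies in the first step: the identification $\widehat{\OO_{E,0}} = \Rch$ depends on Lemma \ref{lemma namikawa} applied to the positively graded algebra $S/K$, and one has to carefully trace through the construction of $\iota$ to see that the induced map on complete local rings is really the identity of $\Rch$ rather than merely some isomorphism. Once this is in place, the $\Gm$-equivariant radicialness argument in the second step follows formally from the positivity of weights.
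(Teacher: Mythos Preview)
Your argument is correct, and the first step (\'etaleness at $0$ plus propagation via the $\Gm$-action) is the same as in the paper. The identification $\widehat{\OO_{E,0}} = \Rch$ you flag as delicate is fine: since $\hat K$ is generated by homogeneous elements of $S$ (Lemma \ref{rmk equivariant}), one has $\hat K = K\Shut$, and completion commutes with quotients by finitely generated ideals, so $\widehat{S/K} = \Shut/\hat K = \Rch$.

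Your second step, however, is genuinely different from the paper's and rather cleaner. The paper shows injectivity by restricting to orbit closures: for $x\in E\setminus\{0\}$ it looks at the closure $Z$ of $\Gm\cdot\iota(x)$ in the image, treats separately the cases where $Z$ is affine (where Lemma \ref{lemma namikawa} applies directly) and where $Z$ is a projective rational curve (deriving a contradiction from the fact that $\PP^1$ cannot carry a $\Gm$-action with strictly positive weights at both fixed points). Your diagonal argument bypasses this case analysis entirely: since $\HH$ is separated, $E\times_\HH E$ is closed in $E\times E$, the complement of $\Delta(E)$ is closed and $\Gm$-stable there, and positivity of weights on $E\times E$ forces any non-empty such closed set to contain $(0,0)=\Delta(0)$. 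This is shorter and more conceptual; the paper's approach, by contrast, stays closer to the geometry of the orbit structure and does not invoke the general fact that \'etale monomorphisms are open immersions.
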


\begin{proof}
It is sufficient to  show that $\iota$ is \'{e}tale and injective. 
By construction, it is \'{e}tale over the distinguished point $0\in E$ and, by general theory, over a Zariski open subset $U$ containing $0$.
Let us note that the existence of the $\Gm$-action with a unique fixed point implies that every $\Gm$-orbit of each point in $E$ meets $U$. Thus, $\iota$ is \'{e}tale everywhere and its  image $V:=\iota(E) \subset \HH$ is Zariski open in $\HH$.

Suppose that $V$ is affine, say $V = \Spec(A)$. Let us write $R:=S/K$, and let $\iota^\# : A \to R$ be the induced map. By construction of $R$ and by definition of $A$, the map induced by $\iota^\#$ between the completions at the unique $\Gm$-stable maximal ideals $\widehat A = \OH \to \Rch$ is an isomorphism and, by  Lemma \ref{rmk equivariant}, it is $\Gm$-equivariant.
The claim now follows from  Lemma \ref{lemma namikawa} as $A$ and $R$ can be recovered from $\widehat A$ and $\Rch$ as the $k$-algebras generated by the eigenvectors for the $\Gm$-action. 

Let us now consider the general case. As $0\in E$ is the only $\Gm$-fixed point in $E$, and as $\iota$ is \'{e}tale, it follows that $v:=\iota(0)$ is the only $\Gm$-fixed point in $V$, and that $0$ is the only preimage of $v$. For $x \in E\ohne \{0\}$, the orbit $\Gm\cdot \iota(x)$ is thus  one dimensional. Let $Z$ denote its Zariski closure in $V$. Set-theoretically, $Z= \Gm\cdot \iota(x) \cup \{v\}$ as $v$ is the only $\Gm$-fixed point in $V$. The restriction $\iota\vert_{\iota^{-1}(Z)}:\iota^{-1}(Z)\to Z$ is $\Gm$-equivariant, \'{e}tale and its completion at the $\Gm$-fixed points is an isomorphism. If $Z$ is affine, the same argument as above gives the bijectivity.

Suppose that $Z$ is not affine; thus it is a projective singular rational curve. 
Clearly $Z$ is singular, since on a smooth complete curve, the complement of a $\Gm$-orbit cannot be a single point. Let $\nu: \widetilde{Z} \to Z$ be the normalization. The action of $\Gm$ extends to $\widetilde{Z}$ and $\nu$ is equivariant so that, by the set-theoretical description of $Z$, we have $\nu^{-1}(v)=\{x_+,x_-\}$. Consider the map between the completions
\[
\nu^\#:\widehat \sO_{Z,v} \to \widehat\sO_{\widetilde{Z},x_+}\times \widehat\sO_{\widetilde{Z},x_-}.
\]
The map $\nu^\#$ is injective and equivariant. Let $t_+$ and $t_-$ denote local parameters at $x_+$ and $x_-$ respectively, which are weight vectors for the $\Gm$-action. Let $\mm_v$ be the maximal ideal of $v$ in $\sO_{Z,v}$. Then $\Gm$ acts on $\mm_v/\mm_v^2$ with strictly positive weights by Hypothesis \ref{hypo weights}. Both maps $\sO_{Z,v} \to \sO_{\widetilde{Z},x_\pm}$ are non-zero, thus the weights of $t_-$ and $t_+$ for the $\Gm$-action are both strictly positive.
But $(\widetilde{Z},x_+,x_-)$ is isomorphic to $(\PP^1,0,\infty)$, and on the latter it is an easy exercise to check that there is no $\Gm$-action with strictly positive weights on both points $0$ and $\infty$. This is a contradiction, and hence $Z$ cannot be projective, completing the proof.
\end{proof}

Sometimes it happens that we do not find a $\Gm$-action with positive weights on the whole of $M=(T^1)^\vee$. For a given $\Gm$-action as above with positive weights on $k[W]$ and arbitrary weights on $M$, let $M_1\subset M$ be the subspace generated by vectors of positive weight. The unique $\Gm$-equivariant projection $M\to M_1$ which is the identity on $M_1$ gives rise to a subscheme of the base space of the universal deformation as follows.

\begin{corollary} \label{cor open immersion}
There exists a $\Gm$-scheme $E_1$ of finite type over $k$, and a $\Gm$-equivariant morphism $\iota_1:E_1\to \HH$ such that the following statements hold.
\begin{enumerate}
\item The Zariski cotangent space to $E_1$ at $0$ is $M_1$;
\item $\iota_1(0)= [X]$;
\item $\iota_1$ is an immersion, that is, the composition of an open immersion and a closed immersion; and
\item $\iota_1$ induces the projection $M\to M_1$ on cotangent spaces.
\end{enumerate}
\end{corollary}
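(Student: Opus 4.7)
My strategy is to adapt the proof of Theorem~\ref{thm open immersion} by restricting attention to the positive weight directions. By Lemma~\ref{rmk equivariant}, I may choose the surjection $\psi \colon \Shut \to \Rch$ to be $\Gm$-equivariant, with a weight-vector basis $t_1,\ldots,t_d$ of $M$ arranged so that $t_1,\ldots,t_{d_1}$ form a basis of $M_1$. Set $I_0 := (t_{d_1+1},\ldots,t_d) \subset \Shut$; then $\Shut/I_0$ is canonically the $\mm$-adic completion of $S_1 := \Sym^\bullet M_1$, which I denote $\Shut_1$. Setting $\Rch_1 := \Rch/\psi(I_0)$, the map $\psi$ induces a $\Gm$-equivariant surjection $\bar\psi \colon \Shut_1 \twoheadrightarrow \Rch_1$.

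\textbf{Algebraization of $\Rch_1$.} I choose, as in the proof of Lemma~\ref{rmk equivariant}, weight-vector generators $f_1,\ldots,f_k$ of $\ker\psi$ whose weights are bounded, which is possible since $\ker\psi / \mm_{\Shut}\ker\psi$ is a finite-dimensional $\Gm$-module. Their images $\bar f_i$ in $\Shut_1$ are weight vectors of bounded weight; as $\Gm$ acts on $M_1$ with strictly positive weights, every such weight vector lies in $\Sym^{\leq N}M_1 \subset S_1$ for some $N$. Hence $\ker\bar\psi = K_1 \cdot \Shut_1$ with $K_1 := (\bar f_1,\ldots,\bar f_k) \subset S_1$ a polynomial ideal, and $\Rch_1$ is the completion of $S_1/K_1$ at the irrelevant maximal ideal. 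I then define $E_1 := \Spec(S_1/K_1)$, so that properties~(1) and~(4) of the corollary are immediate: $\mm_{S_1}/\mm_{S_1}^2 = M_1$, and the cotangent map induced by the quotient $\Rch \twoheadrightarrow \Rch_1$ is by construction the prescribed projection $M \to M_1$.

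\textbf{Construction of $\iota_1$.} Running the algorithm behind Theorem~\ref{lemma cle} and projecting the $(G \times \Gm)$-equivariant matrices to $S_1$, the weight-boundedness argument from the proof of Lemma~\ref{lem homogeneous}---which only requires positivity of the $\Gm$-weights on $M_1$ and on $P$---produces matrices $(\tilde U,\tilde V)$ of bounded degree with coefficients in $S_1/K_1$ representing a $G$-stable family $\phi_1 \colon \ZZ_1 \to E_1$ with central fiber $X$. Flatness of $\phi_1$ at $0$ is built in, and extends to all of $E_1$ by $\Gm$-equivariance since every orbit has $0$ in its closure. The universal property of $\HH$ then yields a $\Gm$-equivariant morphism $\iota_1 \colon E_1 \to \HH$ with $\iota_1(0) = [X]$, giving~(2).

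\textbf{Immersion property (the main obstacle).} For~(3), I follow the strategy of the proof of Theorem~\ref{thm open immersion}. The completion of $\iota_1$ at $0$ is the surjection $\Rch \twoheadrightarrow \Rch_1$, so $\iota_1$ is formally a closed immersion at $0$, in particular unramified there; by openness of the unramified locus together with $\Gm$-equivariance and the fact that every orbit in $E_1$ contracts to $0$, $\iota_1$ is unramified on all of $E_1$. For injectivity on points I would adapt the orbit-closure and singular rational curve argument at the end of the proof of Theorem~\ref{thm open immersion}: two distinct points with the same image would give rise to distinct one-dimensional $\Gm$-orbit closures in $E_1$ mapping to the same rational curve in $\HH$, and the Lemma~\ref{lemma namikawa} argument at the common $\Gm$-fixed point, combined with the positivity of the weights on $M_1$, would force a contradiction. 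Since an unramified monomorphism of finite type between noetherian schemes is a locally closed immersion,~(3) follows. The delicate point is precisely this passage from formal injectivity at $[X]$ to global injectivity via the $\Gm$-contraction, just as in Theorem~\ref{thm open immersion}.
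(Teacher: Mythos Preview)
Your construction of $E_1$ and $\iota_1$ is correct and matches the paper's: you set up $S_1=\Sym^\bullet M_1$, show that $\ker\bar\psi$ is generated by polynomial weight vectors using the positivity of the weights on $M_1$, and produce the family via the weight-bound argument of Lemma~\ref{lem homogeneous}. The paper does exactly this, phrased as ``put $K_1:=S_1\cap(N_1+\hat K)$'' where $N_1=\ker(\Shut\to\Shut_1)$ is your $I_0$.

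The gap is in your last step. The assertion that an unramified monomorphism of finite type between noetherian schemes is a locally closed immersion is \emph{false}. A standard counterexample: let $Y$ be an integral nodal curve with node $q$, let $\tilde Y\to Y$ be the normalization with $\{p,p'\}$ the preimage of $q$, and set $X:=\tilde Y\setminus\{p\}$. Then $X\to Y$ is bijective on points, unramified (at $p'$ the map $\OO_{Y,q}\to\OO_{X,p'}$ is surjective already on completions), and a monomorphism; but it is not an immersion, since the image is all of $Y$ yet the map is not an isomorphism. What \emph{is} true is that an \'etale universally injective morphism is an open immersion; unramified alone does not suffice.

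The paper avoids this by first passing to the closure $\overline{\iota_1(E_1)}\subset\HH$ and then running the proof of Theorem~\ref{thm open immersion} with $\overline{\iota_1(E_1)}$ in place of $\HH$. The point is that, relative to this closed subscheme, the induced map on completions at the fixed point is an \emph{isomorphism} (not merely a surjection), so $E_1\to\overline{\iota_1(E_1)}$ is \'etale at $0$, and then the $\Gm$-spreading and orbit-closure arguments you sketched yield that it is an open immersion. Composing with the closed immersion $\overline{\iota_1(E_1)}\hookrightarrow\HH$ gives~(3). Your unramified-plus-injective argument is the right ingredient, but it has to be applied to the map into the closure, where ``unramified'' upgrades to ``\'etale''.
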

\begin{proof}
Let $S_1:= \Sym^\bullet M_1$, let $\widehat S_1$ be its completion at the maximal ideal generated by $M_1$, let $\Shut\to \widehat S_1$ be the induced surjection, and let $N_1$ be its kernel. Then the arguments from Lemma \ref{rmk equivariant} show that $N_1 + \hat K$ is generated by weight vectors for the $\Gm$-action. Put $K_1:=S_1 \cap (N_1 + \hat K)$ and $E_1:=\Spec(S_1/K_1)$. The maximal ideal generated by the image of $M_1$ in $S_1/K_1$ defines the point $0\in E_1$. Lemma \ref{lem homogeneous} carries over mutatis mutandis and the resulting family of subschemes of $W$ gives a $\Gm$-equivariant morphism $\iota_1:E_1\to \HH$ which certainly sends $0$ to $[X]$. Finally, the proof of Theorem \ref{thm open immersion} goes through literally with $\overline{\iota_1(E_1)}$ in place of $\HH$.
\end{proof}

\section{The algorithm}\label{section algorithm}

In this section we describe our algorithm (Algorithm \ref{algo main}) to calculate the universal deformation. The  proof of its validity follows from Theorem \ref{lemma cle} and from section \ref{subsec gm action}. We will make the link when necessary and focus mostly on how to perform the steps of the algorithm in practice. For our calculations, we used the computer algebra system \cite[Macaulay2]{Mac2}.

For $i=1,2$, we identify $P \otimes N_i$ with $P^{n_i}$ as before; see Notation \ref{not avec base}. Moreover, to simplify the notation, we will denote by $P^{a \times b}$ the space $\Hom_P(P^a,P^b)$ of matrices with $b$ rows and $a$ columns.

\subsection{The presentation of \texorpdfstring{$P/I$}{P/I}}\label{subsec resolution}

Recall the setting (see sections \ref{subsec setup} and \ref{subsec gm action}): we consider a reductive algebraic group $G$, an affine $G$-scheme of finite type $W$, and we want to deform a $(G \times \Gm)$-stable closed subscheme $X\subset W$, where $\Gm$ is a subgroup of $\Aut^G(W)$ satisfying Hypothesis \ref{hypo weights}.
We denote by $P$ the coordinate ring of $W$, and by $I\subset P$ the defining ideal of $X$.  In particular, $I$ is a $(P,G \times \Gm)$-module.

Recall the definition of $K_n$, $S$, and $M$ from section \ref{subsec tangent spaces}. For a given $n \geq 0$, we have to find the ideal $K_n$ such that $S/K_n$ is the $n$-th truncation of the base space of the universal deformation, and matrices
\[
A_i \in {(P^{1\times n_1}\otimes \Sym^i M)}^{G \times \Gm} \textrm{ \ \ and \ \  } B_i \in {(P^{n_1\times n_2}\otimes \Sym^i M)}^{G \times \Gm} 
\]
such that $U_n = \sum_{i=0}^n A_i$ and $V_n = \sum_{i=0}^n V_i$ represent the universal deformation up to order $n$. 

Let us start with $n=0$. We choose a minimal dimensional $(G \times \Gm)$-submodule $N_1 \subset P$ which generates the ideal $I$, and we denote by $\{f_1,\ldots,f_{n_1}\}$ a basis of $N_1$. 
We fix a minimal $(G \times \Gm)$-submodule $N_2\subset P \otimes N_1$, which gives rise to an exact sequence of $(P,G \times \Gm)$-modules
$$ P \otimes N_2 \to P \otimes N_1 \to I \to 0,$$
and we denote by $\{r_1,\ldots,r_{n_2}\}$ a basis of $N_2$. Here we choose bases of $N_1$ and $N_2$ with respect to their decomposition into irreducible $G$-modules, and such that the $f_i$ and the $r_j$ are weight vectors for the $\Gm$-action. Then there is a unique decomposition $r_j=\sum_{i=1}^{n_2} p_{ij} \otimes f_i$, where $p_{ij} \in P$. 
One easily checks that the matrices
$$U_0=A_0= \begin{bmatrix} f_1 & f_2 & \ldots & f_{n_1} \end{bmatrix} \in  {(P^{1\times n_1})}^{G \times \Gm}$$ 
and
$$V_0=B_0=\begin{bmatrix} p_{11} & p_{12} & \ldots & p_{1n_2} \\
p_{21} & p_{22} & \ldots & p_{2n_2} \\
\vdots &    \vdots      &       & \vdots \\
p_{n_11} & p_{n_12} & \ldots & p_{n_1n_2}         
\end{bmatrix} \in {(P^{n_1\times n_2})}^{G \times \Gm} $$
give the presentation.

The computation of $V_0$ for a given $U_0$ can be done with \cite[Macaulay2]{Mac2} since $V_0$ is nothing else than the first syzygy matrix of the ideal $I$ with respect to the generators $f_1,\ldots,f_{n_1}$. To be precise, \cite[Macaulay2]{Mac2} provides a vector subspace $N'_2 \subset P \otimes N_1$ which generates the kernel of the multiplication map $P\otimes N_1\to P$ as a $P$-module, but may not be $G$-stable. In that case, we take for $N_2$ the $G$-submodule of $P\otimes N_1$ generated by $N'_2$.

\subsection{The first order deformation}  \label{first order}
  
Recall from section \ref{subsec tangent spaces} that $T^1=\Hom_{P}^G(I,P/I)$ is the space of first order $G$-stable deformations of $X$ inside $W$. Let us explain how to compute a basis of $T^1$.

First, we determine a basis of the vector space 
$$V:=\Hom_P^G(P \otimes N_1,P/I) \cong \Hom^G(N_1,P/I).$$

This is done as follows. Let $N_1=\bigoplus_{j \in J} M_j^{\oplus m_j}$ be a decomposition of $N_1$ into irreducible $G$-modules. By definition of $h$, we have $P/I\isom\bigoplus_{M \in \Irr(G)} M^{\oplus h(M)}$ as a $G$-module, so the dimension of $V$ is given by
$$D:=\dim(V)=\dim \left( \Hom^G \left(\bigoplus_{j \in J} M_j^{\oplus m_j}, \bigoplus_{M \in \Irr(M)} M^{\oplus h(M)} \right)\right)=\sum_{j \in J} m_j h(M_j).$$
An explicit basis of $V$ can be obtained as follows: 
\begin{enumerate}
\item For $1 \leq i \leq n_1$, denote by $E_i \in P^{1 \times n_1}=\Hom_P(P \otimes N_1,P)$ the matrix whose $i$-th coefficient is $1$ and all the others are $0$.
\item Compute $\LL_0:=\{ \pi \circ \RR(E_1),\ldots, \pi \circ \RR(E_{n_1}) \} \subset \Hom_P^G(P \otimes N_1,P/I)$, where $\RR: \Hom_P(N_1 \otimes P,P) \to \Hom_P^G(N_1 \otimes P,P)$ is the Reynolds operator, and $\pi: P \to P/I$ is the quotient map. Extract a basis $\BB_0$ of the vector space generated by $\LL_0$. If $\Card(\BB_0)=D$, then $\BB_0$ is a basis of $V$, else go to the next step.
\item Fix a basis $\{p_1^1,\ldots,p_{k_1}^1 \}$ of the $\Gm$-submodule $P_1$ of $P$ generated by weight vectors of weight $1$. Note that, for every $i \geq 0$, the vector space $P_i$ is finite-dimensional by Hypothesis \ref{hypo weights}. Compute $\LL_1:=\{ \pi \circ \RR(p_j^1 E_l), 1 \leq j \leq k_1, 1 \leq l \leq n_1 \}$, and extract a basis $\BB_1$ of the vector space generated by $\LL_0 \cup \LL_1$. If $\Card(\BB_1)=D$, then $\BB_1$ is a basis of $V$, else go to the next step.
\item Fix a basis $\{p_1^2,\ldots,p_{k_2}^2 \}$ of $P_2 \subset P$, compute $\LL_2$, extract a basis $\BB_2$ of $\LL_2 \cup \LL_1 \cup \LL_0$ etc. 
\end{enumerate} 

Since $V$ is finite-dimensional, this procedure has to stop after a finite number of steps. Unfortunately, we were unable to get an upper bound for the number of steps.

Once we have a basis $\BB_{N}=\{\vv_1,\ldots,\vv_D\}$ of $V$, it is easy to determine a basis of $T^1$ seen as a vector subspace of $V$. Indeed, we have seen in section \ref{G eq reso} that $T^1$ is just the kernel of the linear map
\[
V=\Hom_P^G(P \otimes N_1,P/I) \to \Hom_P^G(P \otimes N_2,P/I),\ v  \mapsto v B_0.
\]  
Let us denote $\BB=\{ \sss_1, \ldots, \sss_d\} \subset (P^{1\times n_1})^{G}$ such that $\{ \pi \circ \sss_1,\ldots, \pi \circ \sss_d\}$ is a basis of $T^1$. At this point, we can assume that each $\sss_i$ is a weight vector for the $\Gm$-action. Then we denote by $\{ t_1,\ldots,t_d\}$ the dual basis to $\BB$; in particular, each $t_i$ is also a weight vector for the $\Gm$-action. Note that
$$\Rch=k[\![t_1,\ldots, t_d]\!]/\hat K,$$ 
where $\hat K$ is the ideal defined by \eqref{eq def khut}, and
\[
A_1 = \sum_{i=1}^d t_i \sss_i \in {(P^{1\times n_1}\otimes M)}^{G \times \Gm}.
\]

To find $B_1$, we have to solve $A_0 B_1 = -A_1 B_0$. This equation has a solution because, by construction of $A_1$, the diagram
\[
\xymatrix{
P^{n_2} \ar[r]^{B_0} &P^{n_1}\ar[d]_{A_0}\ar[r]^{A_1}  & P\otimes M \ar[d]\\
&I \ar[r]_(0.3){\sum_i t_i^\vee\otimes t_i} & (P/I)\otimes M\\
}
\]
commutes. Denoting $B'_1 \in  P^{n_1\times n_2}\otimes M$ such that $A_0 B'_1=-A_1 B_0$, it follows from Corollary \ref{cor reynolds} that $B_1:=\RR(B'_1) \in (P^{n_1\times n_2}\otimes M)^{G \times \Gm}$ does the job, where $\RR$ is the Reynolds operator.

All these steps can be performed with \cite[Macaulay2]{Mac2} or any other computer algebra system.

\subsection{The higher order deformations}

The algorithm we perform is stipulated by Theorem \ref{lemma cle}. Suppose that, for some $n \geq 1$, we have calculated $(G \times \Gm)$-equivariant matrices
\[
U_n = A_0+\ldots+ A_n \quad\textrm{ and } \quad V_n = B_0+\ldots+ B_n
\]
as well as the ideal $K_n = K+\mm_S^{n+1}$ such that $U_n V_n =0$ modulo $K_n$. We will perform the step $n+1$ of the algorithm, that is, the computation of $A_{n+1}$, $B_{n+1}$ and $K_{n+1}$. After each step, we check for the stop condition (see section \ref{subsec stop}) and perform the next step if the stop condition is not satisfied.

\subsection{A stop condition}\label{subsec stop}

By Lemma \ref{rmk equivariant}, the ideal $\hat K \subset \Shut$ is generated by weight vectors for the $\Gm$-action on $S$. At the step $n$ of the algorithm, we obtain $K_n$ by calculating a list of weight vectors $g_i^n\in S$ with strictly positive weight
\[
g_1^n, \ldots, g_k^n \in \Sym^{\leq n} M
\]
such that 
\[
K_n=(g_1^n, \ldots, g_k^n ) + \mm_S^{n+1}.
\]
Take $d_i^n$ such that $g_i^n \in \mm^{d_i^n}$ but $g_i^n \notin \mm^{1+d_i^n}$. As all weights for the $\Gm$-action on $S$ are strictly positive by Hypothesis \ref{hypo weights}, the $g_i^n$ will be in $K$ if
\begin{equation}\label{eq stop condition 0}
\weight(g_i^n) < n\cdot \min_{i=1}^d \weight(t_i).
\end{equation}
Let us denote by $K'_n$ the ideal of $S$ generated by the $g_i^n$ satisfying the condition \eqref{eq stop condition 0}. Then the stop condition is that
\begin{equation}\label{eq stop condition}
U_n V_n= 0 \ \mod K'_n
\end{equation}
holds. By Lemma \ref{lem homogeneous}, there exists $\alpha$ such that $U_n V_n= 0 \mod K$ for every $n \geq \alpha$. Moreover, by Lemma \ref{rmk equivariant} and by definition of $K'_n$, there exists $\beta$ such that $K'_n=K$ for every $n \geq \beta$. Hence, the condition \eqref{eq stop condition} is satisfied for every $n \geq \max(\alpha,\beta)$.

Let $n_0 \geq 1$ be minimal such that the condition \eqref{eq stop condition} holds. Then one may check that $(U_{n_0},V_{n_0})$ represent the universal deformation (use Theorem \ref{lemma cle} \eqref{lemma cle enum 2}), and that $K=K'_{n_0}$ (use Theorem \ref{lemma cle} \eqref{lemma cle enum 3}). 
In particular, if $A_1B_1=0$, then $K=\{0\}$.

\subsection{The Algorithm}  \label{code_algo}

The input of the algorithm is an ideal $I \subset P=k[W]$ which is $(G \times \Gm)$-stable and satisfies the assumptions of section \ref{subsec setup} and Hypothesis \ref{hypo weights}.
The output is a quadruplet $(S,K,U,V)$ where 
\begin{itemize}
\item $S$ is the polynomial ring defined by \eqref{def S}; 
\item $K \subset S$ is the ideal such that $S/K$ is the base space of the universal deformation; and
\item $(U,V) \in  \left(P^{1\times n_1} \otimes S \right)^{G \times \Gm} \times \left( P^{n_1\times n_2} \otimes S\right)^{G \times \Gm}$ is a couple of matrices representing the universal deformation $\XX \to \Spec(S/K)$. 
\end{itemize}

\begin{algo}\label{algo main} ---

\begin{enumerate}
\item Input: the ideal $I=(f_1,\ldots,f_{n_1}) \subset P$, where the $f_i$ are as in section \ref{subsec resolution}
\item $A_0:=\begin{bmatrix} f_1 & \cdots & f_{n_1} \end{bmatrix} \in P^{1\times n_1} $
\item Compute the syzygy matrix $B_0 \in P^{n_1\times n_2}$ of the ideal $I$
\item \label{debut tang} For $1 \leq i \leq n_1$, denote by $E_i \in P^{1\times n_1}$ the matrix whose $i$-th coefficient is 1 and the other coefficients are 0
\item $m:=0$
\item \label{step vorher} Fix a basis $\{p_1^m,\ldots,p_{k_m}^m \}$ of the vector subspace $P_{m} \subset P$ generated by weight vectors of weight $m$ for the $\Gm$-action
\item \label{hard} Compute $\LL_m:=\{\pi \circ \RR(p_j^m E_l), 1 \leq j \leq k_m, 1 \leq l \leq n_1 \}$, where $\RR: P^{1\times n_1}  \to (P^{1\times n_1})^G$ is the Reynolds operator, and $\pi: P \to P/I$ is the quotient map
\item If $\rk(\LL_m \cup \ldots \cup \LL_0)<\dim(\Hom^G(N_1,P/I))$, then $m:=m+1$ and go to Step \eqref{step vorher}  
\item \label{fin tang} Compute a basis $\BB \subset \LL_m$ of the  kernel of the linear map 
$$\Hom_P^G(N_1 \otimes P,P/I) \to \Hom_P^G(N_2 \otimes P,P/I),\ \vv  \mapsto \vv B_0$$
\item Represent elements of $\BB$ by matrices $\sss_1, \ldots, \sss_d$ in $(P^{1\times n_1})^{G \times \Gm}$
\item $M:=\left \langle t_1,\ldots,t_d \right \rangle$
\item $A_1:=\sum_{i=1}^{d} t_i \sss_i \in (P^{1\times n_1} \otimes M)^{G \times \Gm}$
\item Solve the matrix equation $A_0X=-A_1B_0$
\item\label{rey2} $B_1:=\RR(X)  \in  (P^{n_1\times n_2} \otimes M)^{G \times \Gm}$
\item $U_1:=A_0+A_1$, $V_1:=B_0+B_1$
\item $n:=2$
\item\label{goto} $P_n:=-\sum_{p+q\leq n-1} A_p B_{q} - \sum_{i=1}^{n-1} A_i B_{n-i} \in P^{1\times n_2} \otimes \Sym^{\leq n} M$
\item Compute the image $\omega_n$ of $P_n$ in $\Coker(\mu)$, where 
$$\begin{array}{cccc}
\mu:  & \left( P^{1\times n_1} \otimes {\Sym}^{\bullet} M \right) \oplus \left( P^{n_1\times n_2}\otimes {\Sym}^{\bullet} M \right) &   \to  & P^{1\times n_2} \otimes {\Sym}^{\bullet} M \\ 
    &  (X_1,X_2) &  \mapsto & X_1 B_0 + A_0 X_2 
\end{array}$$
\item Take $c_1,\ldots,c_r$ the coefficients of $\omega_n$ in ${\Sym}^{\leq n} M$ 
\item $K_n:=(c_1,\ldots,c_r)$
\item Solve the matrix equation $A_0X_1+X_2B_0=P_n$ mod $K_n$
\item\label{rey3} $A_n:=\RR(X_2) \in  (P^{1\times n_1} \otimes \Sym^n M)^{G \times \Gm}$, $B_n:=\RR(X_1) \in (P^{n_1\times n_2} \otimes \Sym^n M)^{G \times \Gm}$
\item $U_n:=U_{n-1}+A_n$, $V_n:=V_{n-1}+B_n$
\item $K'_n:=(c_{i_1},\ldots,c_{i_p})$ where $\{c_{i_1},\ldots,c_{i_p}\}$ is a maximal subset of $\{c_1,\ldots,c_r\}$ such that each $c_{i_k}$ satisfies the condition \eqref{eq stop condition 0} 
\item \label{condition stop} If $U_n V_n=0$ mod $K'_n$, then return $(\Sym^{\bullet} M,K'_n,U_n,V_n)$
\item Else $n:=n+1$ and go to Step \eqref{goto}
\end{enumerate}
\end{algo}

\begin{remark} \label{remark difficult steps}---
\begin{itemize}
\item Steps \eqref{hard}, \eqref{rey2}, and \eqref{rey3} require the computation of Reynolds operators, which is done by implementing Algorithm 4.5.19 from \cite{DK}.
\item Steps \eqref{debut tang} to \eqref{fin tang} implement the procedure described in section \ref{first order} to compute the tangent space $T^1=T_{[X]} \HH$. This part of the algorithm can be implemented independently of the rest if one is only interested  in the tangent space.
\item As mentioned earlier, we have an explicit upper bound for $n$ (see section \ref{subsec stop}), but we do not have such any bound for $m$ (see section \ref{first order}). In particular, we know that our algorithm has to stop, but we do not know its complexity. 
\item If there is a subgroup $ \Gm \subset \Aut^G(W)$ which acts on $P$ and stabilizes the ideal $I$ but does not satisfy Hypothesis \ref{hypo weights}, then   Algorithm \ref{algo main} can still be used to calculate the universal deformation up to a given order. However, in this case it might happen that the stop condition \eqref{condition stop}  is never satisfied.
\item Given only $P$ and $I$ with their $G$-action, our algorithm cannot decide whether Hypothesis \ref{hypo weights} holds. Hence, this part of the calculation has to be done by hand before applying our algorithm.
\end{itemize}
\end{remark}

\section{An application: the action of \texorpdfstring{$SO_3$}{SO3} on \texorpdfstring{$(k^3)^{\oplus 3}$}{(k3)3}}  \label{HclassSOngeneral}

\subsection{Setting and main result} \label{HclassSOngeneralset}

Let $V$, $V'$ be $3$-dimensional vector spaces. We take $G=SO(V)$, $H=GL(V')$, and $W=\Hom(V',V) \cong V^{\oplus 3}$. For all practical purposes, we identify $W$ with the space of $3\times 3$-matrices $k^{3\times 3}$. The group $G \times H$ acts on $W$ by:
\begin{equation}  \label{Haction}
(g,h).w:=g \circ w \circ h^{-1}
\end{equation}
for $w \in W$ and $(g,h) \in G \times H$. 
We fix once and for all the Hilbert function
\begin{equation} \label{functionh}
h_0:\Irr(G) \to \NN,\quad M \mapsto \dim(M).
\end{equation}
As in section \ref{sec IHS}, we denote by $\HH=\Hilb_{h_0}^G(W)$ the invariant Hilbert scheme corresponding to the triple $(G,W,h_0)$, and by $\gamma:\HH \to W/\!/G$ the Hilbert-Chow morphism. 
We will see in section \ref{quotientSO3} that $W/\!/G$ is an affine cone whose vertex, denoted by $0$, is the only closed orbit for the $H$-action. Hence, it is natural to ask what the projective scheme $\gamma^{-1}(0)$ looks like. By Proposition \ref{chow}, the point $[X] \in \HH$ belongs to $\gamma^{-1}(0)$ if and only if the ideal $I_X$ contains the homogeneous $G$-invariants of positive degree of $k[W]$. In particular, as $h_0(V_0)=\dim(V_0)=1$, where $V_0$ denotes the trivial representation of $G$, the subset of $\HH(k)$ corresponding to homogeneous ideals of $k[W]$ is contained in $\gamma^{-1}(0)$ seen as a set.

The main result of the section \ref{HclassSOngeneral} is the following one:

\begin{theorem}  \label{casSO3}
Let $G$, $W$, and $\HH$ be as defined above. Then:
\begin{enumerate}
\item The invariant Hilbert scheme $\HH$ is reduced, connected, and has exactly two irreducible components: 
\begin{itemize}
\item the main component $\HHm$, which is smooth of dimension $6$; and
\item another component $\HH'$ of dimension $5$, whose singular locus has dimension $2$.
\end{itemize}
The intersection $\HHm \cap \HH'$ is irreducible of dimension 4, and its singular locus has dimension 2.
\item  The scheme-theoretic fiber $\gamma^{-1}(0)$ of the Hilbert-Chow morphism $\gamma:\HH \to W/\!/G$ over the vertex of the affine cone $W/\!/G$ is non-reduced, connected, contained in $\HHm$, and has exactly two irreducible components:
\begin{itemize}
\item a component of dimension 5, smooth, which is exactly the subset of $\HH(k)$ formed by homogeneous ideals of $k[W]$; and
\item a component of dimension 3, whose singular locus has dimension 2. 
\end{itemize}
\end{enumerate} 
\end{theorem}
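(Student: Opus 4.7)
The plan is to follow the general five-step strategy outlined at the end of the introduction, with the two $B$-fixed points of $\HH$ playing the central role. First I would explicitly compute $W/\!/G=\Spec(k[W]^G)$ via classical invariant theory for the defining representation of $SO_3$: the invariants are generated by the entries of $w^{T}w$ (a symmetric matrix in $H$) and the determinant of $w$, subject to a classical relation, so $W/\!/G$ is indeed an affine cone with a unique closed $H$-orbit $\{o\}$. Taking $B \subset H=GL_3$ to be the standard Borel, I would then determine $\gamma^{-1}(o)^B$ by classifying homogeneous $B$-stable $G$-stable ideals $I \subset k[W]$ whose quotient has Hilbert function $h_0$; the assertion in the introduction tells us there are exactly two such ideals, call them $[X_1]$ and $[X_2]$. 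In practice one finds these by writing down the natural candidates (e.g.\ the ideal generated by a $B$-highest-weight subspace of $(k^3)^{\oplus 3}$ together with the $G$-invariants, and a second such ideal) and verifying that they exhaust the list.

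Next, for each of the two fixed points $[X_i]$ I would exhibit a one-parameter subgroup $\Gm \subset B$ satisfying Hypothesis \ref{hypo weights}, i.e.\ acting with strictly positive weights on both $k[W]$ and $(T_{[X_i]}\HH)^\vee$. This is done by hand: the weights on $k[W]$ are inherited from the standard diagonal torus, and positivity on the cotangent space is checked via the $H$-equivariant isomorphism $T_{[X_i]}\HH \cong \Hom^G_P(I_{X_i},P/I_{X_i})$ of Proposition \ref{isoTangent}, after first decomposing this space into $B$-weight spaces with \textsc{Macaulay2}. Once Hypothesis \ref{hypo weights} is verified at each $[X_i]$, I run Algorithm \ref{algo main} to obtain an algebraization of the completed local ring $\widehat{\OO}_{\HH,[X_i]}$ as $S/K$ with explicit generators and relations. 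From this presentation I read off the number of minimal primes through $[X_i]$, their dimensions, and their singular loci, and I determine which prime corresponds to $\HHm$ by identifying the branch mapped birationally onto $W/\!/G$.

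With the local picture at both $B$-fixed points in hand, the global assertions of part (1) follow by combining Borel's fixed-point theorem with Lemma \ref{closedpoints}. Concretely: any irreducible component, any connected component, the singular locus, and the non-reduced locus of $\HH$ are $H$-stable closed subsets, hence must contain a $B$-fixed point by Lemma \ref{closedpoints}; consequently the global invariants (number of components, reducedness, connectedness, dimension of singular locus) are governed by what happens at $[X_1]$ and $[X_2]$. Connectedness follows by noting that both fixed points lie in $\HHm$, or by producing an explicit $B$-stable curve linking them. The dimension statements for $\HHm$ and $\HH'$ and their intersection are read off from the primary decomposition of $K$ at each $[X_i]$, cross-checking that the dimension of $\HHm$ equals $\dim(W/\!/G)=6$ via the Hilbert--Chow morphism.

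For part (2) I would combine the local computation with the characterization that $[X] \in \gamma^{-1}(0)$ iff $I_X$ contains all positive-degree $G$-invariants of $k[W]$. Intersecting this condition with the explicit equations of $\HH$ at each $[X_i]$ gives a presentation of $\gamma^{-1}(0)$ Zariski-locally near each fixed point; the $\Gm$-action then globalizes via Theorem \ref{thm open immersion}, so the two local pictures cover $\gamma^{-1}(0)$ up to the $H$-action. The smooth $5$-dimensional component is identified with the closure of the locus of homogeneous ideals (a $B$-invariant and easily described subscheme), and the $3$-dimensional component with its singular $2$-dimensional locus is read off from the computation. The main obstacle, beyond the bookkeeping of Macaulay2 output, will be ensuring that the two $B$-fixed points suffice to control every relevant $H$-stable closed subset (components, singular locus, non-reduced locus, components of $\gamma^{-1}(0)$) and that Hypothesis \ref{hypo weights} can indeed be arranged at both points; the rest is an application of Algorithm \ref{algo main} and elementary deduction.
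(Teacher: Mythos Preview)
Your proposal follows essentially the same approach as the paper: determine the two $B$-fixed points, verify Hypothesis \ref{hypo weights}, run Algorithm \ref{algo main}, and globalize via Lemma \ref{closedpoints}. Two points are worth flagging.

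First, the paper avoids running the full algorithm at one of the two fixed points. After computing tangent spaces, it finds that one fixed point (called $[X'_0]$) has $\dim T_{[X'_0]}\HH = 6 = \dim(W/\!/G)=\dim\HHm$, so $[X'_0]$ is already a smooth point of $\HHm$ and no further local computation is needed there. The algorithm is applied only at the singular fixed point $[X_0]$, where $\dim T_{[X_0]}\HH=8$. Your plan to run the algorithm at both points is not wrong, just redundant; but noticing this shortcut in advance is part of the intended workflow.

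Second, your phrasing for part (2) --- that ``the two local pictures cover $\gamma^{-1}(0)$ up to the $H$-action'' --- is not the correct mechanism. The open sets produced by Theorem \ref{thm open immersion} need not cover anything, even after saturating by $H$. The globalization argument is the same as in part (1): any $H$-stable closed subset of $\gamma^{-1}(0)$ (non-reduced locus, an irreducible component, singular locus of a component) must contain a $B$-fixed point by Lemma \ref{closedpoints}, so its properties are detected at $[X_0]$ or $[X'_0]$. For connectedness of $\gamma^{-1}(0)$ the paper additionally invokes Zariski's Main Theorem using normality of $W/\!/G$, which your outline omits. Finally, showing that both fixed points lie in $\HHm$ (needed for connectedness of $\HH$) is done in the paper by explicit one-parameter degenerations from a generic fiber, not as a by-product of the algorithm; you should make that step explicit.
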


\begin{remark}
The extra component $\HH'$ is not contained in $\gamma^{-1}(0)$. In particular, $\HH'$ is not formed by homogeneous ideals of $k[W]$. We emphasize this fact because for $G=O_3$ or $GL_3$, we will see in section \ref{two others} that the extra irreducible component of $\HH$ which appears is formed only by homogeneous ideals of $k[W]$.
\end{remark}

\begin{corollary}
With the notation above, the restriction of the Hilbert-Chow morphism to the main component $\gamma:\HHm \to W/\!/G$ is a resolution of singularities.
\end{corollary}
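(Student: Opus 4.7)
The plan is that this corollary follows almost immediately from Theorem \ref{casSO3} together with Proposition \ref{chow}, so the proof is really a matter of assembling the right ingredients rather than proving anything new.

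First I would recall that by the definition of the main component, $\HHm$ is the Zariski closure of $\gamma^{-1}(U) \subset \HH$, where $U \subset W/\!/G$ is the largest open subset over which the quotient morphism $\nu$ is flat. By Proposition \ref{chow}, the restriction of $\gamma$ to $\gamma^{-1}(U)$ is an isomorphism onto $U$, so in particular the restriction $\gamma|_{\HHm}:\HHm \to W/\!/G$ is birational and dominates $W/\!/G$. Since $\gamma:\HH \to W/\!/G$ is projective and $\HHm \subset \HH$ is a closed subscheme, the restriction $\gamma|_{\HHm}$ is again projective, hence proper. A proper morphism with dense image is surjective, so $\gamma|_{\HHm}$ is surjective onto the irreducible scheme $W/\!/G$.

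Next I would invoke part (1) of Theorem \ref{casSO3}: the main component $\HHm$ is smooth of dimension $6$. Combining smoothness of the source with properness and birationality of $\gamma|_{\HHm}$ gives exactly the content of a resolution of singularities: a proper birational morphism from a smooth scheme to $W/\!/G$ which is an isomorphism over the open subset $U$ where $W/\!/G$ is already smooth (and in particular over the smooth locus of $W/\!/G$, since flatness of $\nu$ on $U$ forces $W/\!/G$ to be at worst regular there by standard arguments, but in any case this is not required for the definition of a resolution).

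There is no real obstacle here; the only point worth checking carefully is that ``birational'' is not ambiguous, i.e.\ that both $\HHm$ and $W/\!/G$ are irreducible of the same dimension. Irreducibility of $\HHm$ is built into its definition, irreducibility of $W/\!/G$ was recorded in section \ref{HclassSOngeneralset}, and equality of dimensions is a consequence of the existence of an open subset of $\HHm$ mapping isomorphically onto a nonempty open subset of $W/\!/G$. Thus the corollary follows.
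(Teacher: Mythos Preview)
Your proposal is correct and matches the paper's intended reasoning: the corollary is stated without proof precisely because it follows immediately from Theorem~\ref{casSO3}(1) (smoothness of $\HHm$) together with the properness and birationality of $\gamma|_{\HHm}$ supplied by Proposition~\ref{chow}. The only superfluous remark is the aside about flatness of $\nu$ forcing regularity of $W/\!/G$ on $U$; this neither holds in general nor is needed, and you yourself correctly note it is irrelevant to the conclusion.
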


First of all, in section \ref{quotientSO3}, we will study the quotient morphism $\nu:W \to W/\!/G$ and see that the general fibers of $\nu$ are isomorphic to $G$. This will imply that the Hilbert-Chow morphism $\gamma:\HH \to W/\!/G$ is an isomorphism over a non-empty open subset; see Corollary \ref{cor gamma iso}. Next, in section \ref{subsec fix so3}, we will determine the only two fixed points of $\HH$ for the action of a Borel subgroup of $H$, and show that they live in the main component of $\HH$. In particular, this gives the connectedness of $\HH$ by Lemma \ref{closedpoints}. Then, we will determine the tangent spaces to $\HH$ at each of these fixed points and see that one of the fixed points is smooth while the other, say $[X_0]$, is singular. Finally, in section \ref{proofSO3}, we will apply our algorithm to the ideal of $X_0 \subset W$ and  obtain an affine open neighborhood $U \subset \HH$ of $[X_0],$ as well as the restriction of the universal family over $U$, and finish the proof of Theorem \ref{casSO3}. 

Sections \ref{quotientSO3} and \ref{subsec fix so3} are mainly extracted from \cite[\S3.2] {Terp}. However, section \ref{proofSO3}, which is by far the most important part of section \ref{HclassSOngeneral}, is an original work.

\subsection{The quotient morphism}  \label{quotientSO3}

The quotient morphism $\nu: \ W \to W/\!/G$ can be explicitly described as follows. Consider the morphism
\begin{equation}\label{eq quot so3} 
\begin{array}{lrcl}
 \mu:  & W   & \to  & S^2(V'^*) \times \Aff \\
        & Q  & \mapsto      &  (\leftexp{t}{Q}Q,\ \det(Q)) 
\end{array}
\end{equation}
where $S^2(V'^*)$ denotes the symmetric square of $V'^*$, $\det$ is the determinant, and $\leftexp{t}{Q}$ is the transpose of the matrix $Q$. Recall that the action of $H=GL(V')$ on $W$ induces an action of $H$ on $W/\!/G$ such that $\nu$ is $H$-equivariant. 

\begin{proposition}\label{prop quotient morphism}
The morphism $\mu$ factors as the composition of the quotient morphism $\nu$ and a closed immersion such that the following holds.
\begin{enumerate}
\item\label{item quotient} The quotient $W/\!/G$ identifies with the closed subvariety $$\left\{ (Q,x) \in  S^2(V'^*) \times \Aff  \  \mid  \  \det(Q)=x^2 \right\}$$ of $S^2(V'^*) \times \Aff$.
\item The singular locus of $W/\!/G$ is $\{(Q,x) \in W/\!/G \ |\ \rg(Q) \leq 1\}$. 
\item The variety $W/\!/G$ decomposes into  $4$ orbits for $H$ which are given by
$$U_i:=\left\{  (Q,x) \in  W/\!/G \  \mid \ \rg(Q) = i \right\}, \hspace{4mm} \text{for $i=0,\ldots,3$}.$$
The closures of these orbits are nested in the following way: 
$$ \{0\}=\overline{U_0} \subset \overline{U_1} \subset \overline{U_2} \subset \overline{U_3}=W/\!/G.$$ 
\item\label{item platitude} The flat locus of the quotient morphism $\nu:W \to W/\!/G$ is  $U_3 \cup U_2 $. 
\end{enumerate}
\end{proposition}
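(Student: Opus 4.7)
The plan is to handle parts (1)--(4) in order, with (1) coming from classical invariant theory, (2) and (3) by elementary arguments, and (4) by a combination of miracle flatness with an explicit fiber-dimension count on the putative non-flat locus.

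For part (1), the map $\mu$ is visibly $G$-invariant, so it factors through the categorical quotient. To show that the induced morphism $W/\!/G \to S^2(V'^*) \times \Aff$ is a closed immersion with image cut out by $\det Q' = x^2$, I invoke the First and Second Fundamental Theorems of classical invariant theory for $SO_n$: for $G = SO(V)$ acting on $V^{\oplus n}$, the invariant ring is generated by the pairwise inner products and by the $n$-fold determinants of the inputs. Here $n = \dim V = 3$, giving exactly one such determinant $\det Q$, and the only relation is $\det({}^tQ Q) = (\det Q)^2$.

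For part (2), I apply the Jacobian criterion to $f = x^2 - \det Q'$ inside the smooth ambient $S^2(V'^*) \times \Aff$. The partials $\partial_{q_{ij}} f$ are, up to sign, the $2\times 2$ cofactors of $Q'$, and $\partial_x f = 2x$; these vanish simultaneously precisely on the locus where every $2\times 2$ minor of $Q'$ vanishes (i.e. $\rg(Q') \leq 1$) and $x = 0$, the latter being automatic on $W/\!/G$ as soon as $\det Q' = 0$. For part (3), a direct computation shows the induced $H$-action reads $h \cdot (Q', x) = ({}^t h^{-1} Q' h^{-1},\ \det(h)^{-1} x)$. The $H$-orbits on symmetric forms are the congruence classes, indexed by rank. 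For $\rg(Q') \leq 2$ the relation $x^2 = \det Q'$ forces $x = 0$; for $\rg(Q') = 3$ the stabilizer $O(Q') \cap H$ contains elements of determinant $-1$, which flip the sign of $x$ and fuse the two components into a single orbit. The nesting of closures is immediate from semicontinuity of rank.

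For part (4), I combine miracle flatness with an explicit fiber dimension count. Since $W$ is smooth, $W/\!/G$ is a hypersurface hence Cohen--Macaulay, and moreover smooth precisely on $U_3 \cup U_2$ by (2), miracle flatness reduces the flatness of $\nu$ over $U_3 \cup U_2$ to verifying that every fiber there has the generic dimension $\dim W - \dim W/\!/G = 3$. Over $U_3$ the fiber is a principal $SO(V)$-torsor, of dimension $3$; over a point of $U_2$, Witt's extension theorem identifies the fiber with a single $SO(V)$-orbit on which the stabilizer is trivial (any $g \in SO(V)$ fixing a nondegenerate 2-plane pointwise is the identity), again of dimension $3$. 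Conversely, flatness of $\nu$ over any point of the integral scheme $W/\!/G$ would force the fiber to have the generic relative dimension $3$; so it suffices to exhibit fibers of dimension $\geq 4$ over points of $U_0$ and $U_1$. Over the cone point $0 \in U_0$ the fiber is the nullcone $\{Q : \Im(Q) \subset V \text{ totally isotropic}\}$, which is parameterized by rank-$1$ decompositions $Q = v \otimes \phi$ with $v$ in the isotropic cone of $V$ and $\phi \in V'^*$ modulo the scaling relation, yielding dimension $2+3-1 = 4$. An analogous count over $U_1$ uses rank-$2$ maps $Q$ whose image is a 2-plane tangent to the isotropic conic in $\PP(V)$. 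The main obstacle I anticipate is this $U_1$ calculation, where the fiber naturally decomposes into rank-$1$ and rank-$2$ substrata of $Q$, and the geometry of 2-planes in $V$ with rank-$1$ restricted form has to be handled carefully to confirm that the rank-$2$ stratum alone already contributes a 4-dimensional family.
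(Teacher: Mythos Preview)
Your proposal is correct and follows essentially the same route as the paper: the First Fundamental Theorem for $SO(V)$ for (1), the Jacobian criterion for (2), elementary linear algebra for (3), and for (4) the combination of fiber-dimension computations with the ``miracle flatness'' criterion (which the paper invokes as \cite[Exercise III.10.9]{Ha}). The paper is terser---it defers the actual fiber-dimension counts over the $U_i$ to \cite[\S3.2.1]{Terp}---whereas you spell them out explicitly; your computations over $U_0$ and $U_1$ are correct, and your worry about the $U_1$ stratum is unfounded (the rank-$2$ locus you describe is indeed already $4$-dimensional). One cosmetic remark: in your miracle-flatness sentence, the clause ``$W/\!/G$ is a hypersurface hence Cohen--Macaulay'' is irrelevant to the argument (you need the \emph{source} $W$ to be CM and the \emph{target} to be regular over $U_3\cup U_2$), so you may want to rephrase to avoid confusion.
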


\begin{proof}
The morphism $\mu$ is $G$-invariant, so by the universal property of the categorical quotient, $\mu$ factors through $\nu$. By the First Fundamental Theorem for $SO(V)$ (see \cite[\S11.2.1]{Pro}) the coordinates of the image of $\mu$ generate the ring of invariants, and thus $W/\!/G$ embeds into $S^2(V'^*) \times \Aff$.  Clearly, $\det(Q)=x^2$ is the only relation, and the first statement follows.

The second assertion is easily verified using the Jacobian criterion, and the third statement is linear algebra.

For the last claim note that $\nu$ is flat over a non-empty open subset of $W/\!/G$, hence over $U_3$. Now one may check (see \cite[\S3.2.1] {Terp}) that the dimension of a fiber of $\nu$ over $U_i$ is 3 if $i \in \{2,3\}$, and is at least 4 if $i \in \{0,1\}$. The result then follows from \cite[Exercise III.10.9]{Ha}, because equidimensional morphisms between regular schemes are flat.
\end{proof}

\begin{remark}
A study of the quotient morphism $\nu: \ W \to W/\!/G$ for $V$ and $V'$ of arbitrary dimensions can be found in \cite[\S3.2.1] {Terp}. 
\end{remark}

\begin{corollary} \label{fibgeneSOn}
The general fibers of the quotient morphism $\nu:W \to W/\!/G$ are isomorphic to $G$. 
In particular, the Hilbert function $h_0$ of the general fibers of $\nu$ is given by \eqref{functionh}.
\end{corollary}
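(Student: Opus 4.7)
By Proposition \ref{prop quotient morphism}\eqref{item platitude} the quotient morphism $\nu$ is flat over the open dense orbit $U_3 = \{(Q',x) \mid \mathrm{rk}(Q')=3\}$, so it suffices to identify the fiber of $\nu$ over an arbitrary point of $U_3$. My plan is to show set-theoretically that each such fiber is a single free $G$-orbit, and then observe that the scheme structure is automatically reduced by flatness and by smoothness of $G$.

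Concretely, pick $Q \in W$ with $\det(Q) \neq 0$, so that $\mu(Q) = (\leftexp{t}{Q}Q,\det(Q)) \in U_3$, using the explicit description of $\mu$ in \eqref{eq quot so3}. For any $g \in G = SO(V)$ one has $\leftexp{t}{(gQ)}(gQ) = \leftexp{t}{Q}\leftexp{t}{g}gQ = \leftexp{t}{Q}Q$ and $\det(gQ) = \det(Q)$, so the whole $G$-orbit of $Q$ is contained in $\nu^{-1}(\mu(Q))$. Conversely, if $Q'\in W$ satisfies $\leftexp{t}{Q'}Q' = \leftexp{t}{Q}Q$ and $\det(Q') = \det(Q)$, then setting $g := Q'Q^{-1}$ (which makes sense since $Q$ is invertible) one computes $\leftexp{t}{g}g = (\leftexp{t}{Q})^{-1}\leftexp{t}{Q'}Q'Q^{-1} = I$ and $\det(g) = \det(Q')/\det(Q) = 1$, so $g \in G$ and $Q' = g\cdot Q$. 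Thus the fiber equals $G \cdot Q$. Moreover the stabilizer of the invertible matrix $Q$ is trivial, so the orbit map $G \to G\cdot Q$ is bijective; since $G$ is smooth and the fiber is reduced of the correct dimension by flatness, this map is an isomorphism.

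For the Hilbert function, the Peter--Weyl decomposition of the coordinate ring of the reductive group $G$ as a left $G$-module reads
\[
k[G] \cong \bigoplus_{M \in \Irr(G)} M^{\oplus \dim M},
\]
so $\dim \Hom^G(M,k[G]) = \dim M$ for every $M \in \Irr(G)$. Since $\nu^{-1}(\mu(Q)) \cong G$ as a $G$-scheme, the Hilbert function $h_0$ of the general fibers coincides with the function $M \mapsto \dim M$ defined in \eqref{functionh}. The only subtle step is the identification of the fiber with $G$ as schemes (not just as sets), but this follows from reducedness (flatness plus reducedness of the generic fiber over a normal base) together with the bijective $G$-equivariant map $G \to \nu^{-1}(\mu(Q))$ between smooth varieties of the same dimension.
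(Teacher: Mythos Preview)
Your proof is correct and reaches the same conclusion by the same overall architecture: exhibit a fiber over $U_3$, show it is a single free $G$-orbit, and then read off the Hilbert function from the Peter--Weyl decomposition of $k[G]$. The difference is in how the fiber is identified. The paper picks the specific point $(\id,1)=\nu(\id)$, observes that the stabilizer of $\id$ is trivial so the closed orbit is isomorphic to $G$, and then uses the general fact that each fiber of a reductive quotient contains a \emph{unique} closed $G$-orbit together with the dimension count $\dim G = 3 = \dim(\text{general fiber})$ to conclude that the fiber equals that orbit. You instead work over an arbitrary invertible $Q$ and solve $\leftexp{t}{Q'}Q'=\leftexp{t}{Q}Q$, $\det Q'=\det Q$ directly, which is more elementary and avoids invoking the closed-orbit structure of GIT fibers; the price is that you then have to argue separately for the scheme structure, which you handle via flatness and reducedness. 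Both routes are short, and your explicit computation has the advantage of being self-contained.
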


\begin{proof}
We fix bases for $V$ and $V'$, and we identify $W=\Hom(V',V)$ with the space of $3\times 3$-matrices. Denoting $\id$ the identity map, we have $\nu(\id)=(\id,1) \in U_3$. The stabilizer of $\id$ in $G=SO(V)$ is trivial, hence $\nu^{-1}((\id,1))$ contains a closed $G$-orbit isomorphic to $G$. As a fiber of $\nu$ always contains a unique closed $G$-orbit, and as $\dim(G)=3$ is also the dimension of the general fibers of $\nu$, we must have $\nu^{-1}((\id,1)) \cong G$. Recalling that
$$ k[G] \cong \bigoplus_{M \in \Irr(G)} M \otimes M^*$$
as a $(G \times G)$-module (see e.g. \cite[\S7.3.1 Theorem]{Pro}), we get that $h_0(M)=\dim(M)$.
\end{proof}

Combining Proposition \ref{prop quotient morphism} \eqref{item platitude} with Proposition \ref{chow} we find

\begin{corollary}\label{cor gamma iso}
The Hilbert-Chow morphism  $\gamma : \HH\to W/\!/G$ is an isomorphism over  $U_3 \cup U_2 $.
\end{corollary}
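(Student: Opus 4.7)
The plan is to combine Proposition \ref{prop quotient morphism}\eqref{item platitude} with Proposition \ref{chow}. Recall Proposition \ref{chow} asserts that, for the invariant Hilbert scheme associated with the Hilbert function $h_0$ of the general fibers of the quotient morphism, the Hilbert-Chow morphism $\gamma$ is an isomorphism over the largest open subset $U \subset W/\!/G$ on which $\nu$ is flat. So we are reduced to identifying this maximal flat locus.

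First, by Corollary \ref{fibgeneSOn}, the Hilbert function $h_0$ defined by \eqref{functionh} really is the Hilbert function of the general fibers of $\nu$, so Proposition \ref{chow} applies to our $\HH = \Hilb_{h_0}^G(W)$. Second, Proposition \ref{prop quotient morphism}\eqref{item platitude} tells us that the flat locus of $\nu$ is exactly $U_3 \cup U_2$. We should briefly verify that this subset is indeed open in $W/\!/G$: from the orbit inclusions
\[
\{0\}=\overline{U_0} \subset \overline{U_1} \subset \overline{U_2} \subset \overline{U_3}=W/\!/G,
\]
the complement $(W/\!/G) \setminus (U_3 \cup U_2) = \overline{U_1}$ is closed, so $U_3 \cup U_2$ is open.

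Combining these two inputs, Proposition \ref{chow} yields that $\gamma$ is an isomorphism over $U_3 \cup U_2$, which is the claim. There is no real obstacle here; the statement is essentially a direct concatenation of previously established results, and the only point meriting a line of verification is the openness of $U_3 \cup U_2$, which is immediate from the orbit closure nesting.
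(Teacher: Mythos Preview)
Your proof is correct and follows exactly the approach the paper takes: the corollary is stated there as an immediate consequence of Proposition~\ref{prop quotient morphism}\eqref{item platitude} and Proposition~\ref{chow}. Your added remarks (invoking Corollary~\ref{fibgeneSOn} to justify that $h_0$ is the right Hilbert function, and checking openness of $U_3\cup U_2$) are reasonable elaborations but not additional ideas.
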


By definition, the main component of $\HH$ is $\HHm=\overline{\gamma^{-1}(U_2 \cup U_3)}$. It follows that the extra components of $\HH$, if any, have to be contained in $\gamma^{-1}(\overline{U_1})$.

\subsection{Fixed points for the action of a Borel subgroup} \label{subsec fix so3}

By \cite[\S10.4]{FH}, there is an isomorphism of algebraic groups $SO_3 \cong {PSL}_2$, where ${PSL}_2:={SL}_2/\{\pm Id\}$. The irreducible representations of $SL_2$ are parametrized by nonnegative integers: 
$d \in \NN \leftrightarrow V_d$, where $V_d:={k[x,y]}_d$ is the space of homogeneous polynomials of degree $d$. In particular, $\dim(V_d)=d+1$. The irreducible representations of $G \cong SO_3$ are thus parametrized by even nonnegative integers. The trivial representation is $V_0$, and the defining representation is $V_2$. We recall that one can easily decompose tensor products of irreducible representations of $G$ using the Clebsch-Gordan formula (\cite[Exercise 11.11]{FH}).

We have 
\begin{align}
&{k[W]}_1 \cong V' \otimes V_2; \text{ and } \label{kW1SO3}\\
&{k[W]}_2 \cong (S^2V' \otimes (V_4 \oplus V_0)) \oplus ({\Lambda}^2V' \otimes V_2); \label{kW2SO3} 
\end{align}
as $(G \times H)$-modules. 

We fix a Borel subgroup $B \subset H$. For explicit calculations, we agree to take $B$ to be the subgroup of upper triangular matrices. Recall that every irreducible $H$-module contains a unique $B$-stable line. We denote by $D_1 \subset V'$ and by  $D_2\subset S^2V'$ these unique $B$-stable lines.

\begin{definition}\label{definition i und iprime}
Let $I\subset k[W]$ be the ideal generated by  
$$(D_2 \otimes V_4) \oplus (S^2V' \otimes V_0) \oplus ({\Lambda}^2 V' \otimes V_2) \subset {k[W]}_2,$$
and let $I'\subset k[W]$ be the ideal generated by  
$$D_1 \otimes V_2 \subset {k[W]}_1\quad \textrm{and} \quad  (S^2V' \otimes V_0) \subset {k[W]}_2.$$
Moreover, let $X_0$ and $X'_0$ be the closed subschemes of $W$ defined by the ideals $I$ and~$I'$ respectively. 
\end{definition}

Note that the ideals $I$ and $I'$ are homogeneous and $(B \times G)$-stable. As explained in section \ref{subsec fix borel}, the first step to determine the global structure of $\HH$ is to determine the $B$-fixed points. The next result was shown in \cite {Terp} using representation theory of $G$ and $B$.  

\begin{theorem} \label{pointfixeborel36}  \emph{(\cite[Th�or�me 3.2.12] {Terp})}
The ideals $I$ and $I'$ are the only two fixed points of $\HH$ for the action of the Borel subgroup $B \subset H$.
\end{theorem}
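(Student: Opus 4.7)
The plan is to reduce the classification of $B$-fixed points in $\HH$ to a combinatorial analysis of $(G \times B)$-stable graded ideals of $k[W]$. First, any $B$-fixed point $[X] \in \HH$ is sent by the $H$-equivariant projective morphism $\gamma$ (Propositions \ref{groupaction} and \ref{chow}) to a $B$-fixed closed point of $W/\!/G$. Using Proposition \ref{prop quotient morphism} together with the fact that the center of $H$ furnishes a subgroup $\Gm \subset B$ acting on $k[W]$ with strictly positive weights, one sees that the only such point is $0$, and moreover that the defining ideal $J$ of $X$ must be homogeneous. Hence $J$ is a $(G \times B)$-stable graded ideal of $k[W]$ containing the augmentation ideal of $k[W]^G$.

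I would then exploit Schur's lemma: in each degree $d$ and for each $k \geq 0$, the intersection of $J$ with the $V_{2k}$-isotype of $k[W]_d$ has the form $L'_{d,k}\otimes V_{2k}$ for a uniquely determined $B$-stable subspace $L'_{d,k}$ of the $H$-module $L_{d,k}:=\Hom^G(V_{2k},k[W]_d)$. The Hilbert function condition reads $\sum_{d\geq 0} \dim(L_{d,k}/L'_{d,k})=2k+1$ for every $k$, so the classification reduces to enumerating sequences $(L'_{d,k})$ of $B$-stable subspaces subject to the ideal compatibility $k[W]_e\cdot J_d \subset J_{d+e}$.

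The analysis starts in degree $1$, where by \eqref{kW1SO3} the space $L'_{1,1}\subset V'$ is one of the four members of the standard flag $0 \subset D_1 \subset \langle e_1,e_2\rangle \subset V'$. The case $L'_{1,1}=V'$ is immediately ruled out since it forces $k[W]/J=k$, contradicting $h_0(V_2)=3$. For $L'_{1,1}=\langle e_1,e_2\rangle$, a careful propagation using the forced inclusion $S^2 V'\otimes V_0\subset J_2$, the Clebsch-Gordan expansion of $k[W]_1 \cdot J_1$ in \eqref{kW2SO3}, and an inspection of the $B$-stable subspaces of the $V_{2k}$-isotypes of $k[W]_3$ leaves no way to meet the counts $2k+1$ exactly. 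In the two surviving cases $L'_{1,1}=0$ and $L'_{1,1}=D_1$, the Hilbert function counts together with $B$-stability force $L'_{2,k}$ uniquely for all $k$, yielding precisely the ideals $I$ and $I'$ of Definition \ref{definition i und iprime}.

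The main obstacle is to confirm that the candidate ideals $I$ and $I'$ indeed have Hilbert function $h_0$ in all degrees, so that they define points of $\HH$, and that no additional $B$-stable generators are needed in degrees $\geq 3$. I would verify this by exhibiting $X_0$ and $X'_0$ as $\Gm$-limits, under suitable one-parameter subgroups of the diagonal torus of $H$, of translates of the generic fiber $\nu^{-1}(\id,1)\cong G$, whose Hilbert function is $h_0$ by Corollary \ref{fibgeneSOn}; flatness of these limits preserves the Hilbert function and completes the argument.
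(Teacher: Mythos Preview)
The paper does not prove this theorem; it is quoted from \cite[Th\'eor\`eme 3.2.12]{Terp} with the remark that it ``was shown in \cite{Terp} using representation theory of $G$ and $B$.'' There is therefore no proof in the present paper to compare against.

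Your approach is exactly of that type and the plan is sound. The reduction to homogeneous $(G\times B)$-stable ideals via the central $\Gm\subset H$ is correct, the Schur decomposition into $B$-stable subspaces $L'_{d,k}\subset L_{d,k}=\Hom^G(V_{2k},k[W]_d)$ is the natural bookkeeping, and your closing step---verifying that $I$ and $I'$ actually lie in $\HH$ by exhibiting them as flat limits of the generic fibre of $\nu$---is precisely the content of Lemma~\ref{lemma connect}, which the paper states and uses later for Proposition~\ref{Hconnexe4} rather than here. Nothing prevents you from invoking it first.

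One streamlining remark on the case analysis. Since any nonzero $B$-stable subspace of an irreducible $H$-module contains its highest-weight line, in the case $L'_{1,1}=0$ the $V_4$-count (which would be $\dim S^2V'=6>5$ if $L'_{2,2}=0$) already forces $L'_{2,2}\supseteq D_2$, hence $J\supseteq I$; the Hilbert-function match from the flat-limit argument then gives $J=I$ without further degree-by-degree inspection. The case $L'_{1,1}=D_1$ is handled the same way with $I'$ in place of $I$. Only the intermediate case $L'_{1,1}=\langle e_1,e_2\rangle$ genuinely requires the degree-$3$ check you allude to.
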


\begin{remark}
One may check that $S:=\Stab_{H}(I')=\Stab_{H}(I)$ is the parabolic subgroup of $H$ stabilizing the line $D_1 \subset V'$. Hence, $S$ is maximal, of dimension 7, and the two closed $H$-orbits of $\HH$ are both isomorphic to $\PP(V')$.
\end{remark}

Following the strategy given in section \ref{subsec fix borel}, we find

\begin{proposition} \label{Hconnexe4}
The two fixed points of $\HH$ for the action of the Borel subgroup $B \subset H$ belong to the main component $\HHm$. In particular, $\HH$ is connected.
\end{proposition}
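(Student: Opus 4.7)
The plan is to exhibit, for each $B$-fixed point, an explicit flat one-parameter family of $G$-stable subschemes of $W$ whose general member belongs to $\gamma^{-1}(U_2\cup U_3)\subseteq \HHm$ and whose central fiber is the prescribed fixed point; the connectedness then follows from Lemma \ref{closedpoints}.

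For $[X'_0]$, I would fix a basis $e_1,e_2,e_3$ of $V'$ compatible with the Borel so that $D_1 = ke_1$, identify $W=\Hom(V',V)$ with triples $(w_1,w_2,w_3)$ via $w_i = w(e_i)\in V$, and consider the family of ideals
\[
I'_t := \bigl(\, w_1,\ w_2\cdot w_2 - t^2,\ w_2\cdot w_3,\ w_3\cdot w_3 - t^2 \,\bigr) \subset k[W]\otimes k[t].
\]
These generators are $G$-stable, and together with the then-redundant relations $w_1\cdot w_1 = w_1\cdot w_2 = w_1\cdot w_3 = 0$ (which follow from $w_1=0$), they span $(D_1\otimes V_2)\oplus (S^2V'\otimes V_0)$ at $t=0$, so the fiber over $0$ is precisely $X'_0$. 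For $t\ne 0$ the zero locus consists of the $w$ with $w_1=0$ and $(w_2,w_3)$ a pair of mutually orthogonal vectors in $V$ of squared length $t^2$; by the argument used in Section \ref{quotientSO3}, this is a single $G$-orbit isomorphic to $G$, equal to the fiber $\nu^{-1}(Q_t,0)$ with $Q_t = \mathrm{diag}(0,t^2,t^2)$ of rank $2$, so $(Q_t,0)\in U_2$. Each fiber therefore has Hilbert function $h_0$: over $t\ne 0$ by Proposition \ref{prop quotient morphism} \eqref{item platitude} (flatness of $\nu$ over $U_2$), and at $t=0$ by Theorem \ref{pointfixeborel36}. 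Constancy of the ranks of the sheaves of covariants yields flatness of the family over $\Aff$, and the resulting morphism $\Aff\to\HH$ sends $0$ to $[X'_0]$ and $\Aff^*$ into $\gamma^{-1}(U_2)\subseteq\HHm$; hence $[X'_0]\in\HHm$ since $\HHm$ is closed.

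For $[X_0]$, the same strategy applies but demands a more delicate family since $I$ has no linear generators. I would begin with the orbits $G\cdot (tv_0)$ for a fixed $v_0\in W$ of full rank (so that $\nu(v_0)\in U_3$), which define a morphism $\Aff^*\to\gamma^{-1}(U_3)\subseteq\HHm$; by properness of $\gamma$ this extends to a morphism $\Aff\to\HHm$ whose value at $0$ lies in $\gamma^{-1}(0)\cap\HHm$. One then identifies this flat limit with $[X_0]$ by computation: the naive limit ideal $(S^2V'\otimes V_0,\det w)$ cuts out the null cone $\nu^{-1}(0)$, which has the wrong Hilbert function, so it must be enlarged in the flat limit. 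Using the decomposition $k[W]_2 \cong (S^2V'\otimes(V_4\oplus V_0))\oplus(\Lambda^2V'\otimes V_2)$ and a direct check of the generators in each isotypic block, one verifies that the flat limit in $\HH$ is exactly the ideal $I$, and therefore $[X_0]\in\HHm$.

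Finally, for connectedness, suppose $C\subseteq\HH$ were a connected component distinct from the one containing $\HHm$. Since $H=GL(V')$ is a connected algebraic group, $C$ is $H$-stable; it is also closed in $\HH$. By Lemma \ref{closedpoints}, $C$ must contain a $B$-fixed point, which by Theorem \ref{pointfixeborel36} is one of $[X_0],[X'_0]$; but both lie in $\HHm$ by the preceding, so $C$ meets $\HHm$, a contradiction. Hence $\HH$ is connected. The principal obstacle is the construction of the family for $[X_0]$: the absence of linear generators in $I$ forces all three $B$-isotypic blocks $D_2\otimes V_4$, $S^2V'\otimes V_0$, and $\Lambda^2V'\otimes V_2$ to be deformed simultaneously, and checking that the correct enlargement of the null-cone ideal is produced in the flat limit—rather than some other homogeneous $G$-stable ideal also supported in $\nu^{-1}(0)$—is where the ad hoc input of the proof is concentrated.
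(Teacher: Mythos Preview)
Your overall strategy---degenerate a point of the flat locus to each fixed point and then invoke Lemma~\ref{closedpoints}---matches the paper's, and your argument for $[X'_0]$ is essentially correct (you should note that $V(I'_t)=\nu^{-1}(Q_t,0)$ holds \emph{scheme}-theoretically, not just set-theoretically: the $\nu$-fibre over $U_2$ is integral, and a closed subscheme of an integral scheme with the same support equals it, which combined with the easy inclusion $J_t\subset I'_t$ gives equality). The paper treats both fixed points uniformly by acting on $[X_{\id}]\in\gamma^{-1}(U_3)$ with diagonal one-parameter subgroups $\theta_n\subset B$ and computing the flat limits as Gr\"obner degenerations (Lemma~\ref{lemma connect}): $n=(-3,-1,-1)$ yields $I'$ and $n=(-3,-2,-2)$ yields $I$.

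For $[X_0]$, however, your proposed family does \emph{not} work. The scalar family $t\mapsto[G\cdot(t\,\id)]$ has as flat limit the initial ideal $\mathrm{in}(J)$ of $J=I_{X_{\id}}$ for the standard grading, and $\mathrm{in}(J)\neq I$. Indeed, each entry of $w\,\leftexp{t}{w}-\id$ lies in $J$ (since $w\,\leftexp{t}{w}=\id$ on $SO_3$), and its leading form contributes the $V_4$-copy $\bigl(\sum_j e_j^2\bigr)\otimes V_4\subset S^2V'\otimes V_4$ to $\mathrm{in}(J)_2$; but the unique $V_4$-copy in $I_2$ is $D_2\otimes V_4=e_1^2\otimes V_4$. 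These correspond to elements of $S^2V'$ of ranks~$3$ and~$1$ respectively, so the scalar limit is not even in the $H$-orbit of $[X_0]$, and by $H$-equivariance the same holds for any full-rank $v_0$ in place of~$\id$. Thus your assertion that ``one verifies that the flat limit is exactly the ideal $I$'' is false for this family. The asymmetry of the exponents in $\theta_{(-3,-2,-2)}$ is precisely what singles out the $B$-stable line $D_2$ in the limit; this is the ``ad hoc input'' you correctly flagged but could not supply, and in the paper it is a Gr\"obner basis computation carried out with a computer algebra system.
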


\begin{proof}
We will construct flat families $p:\ZZ\to \Aff$ of $G$-stable closed subschemes of $W$ such that the zero-fiber $\ZZ_0$ has ideal $I$ respectively $I'$, and such that for every $t \neq 0$, the fiber of $p$ over $t$ corresponds to a point of $\HHm$.

By Corollary \ref{cor gamma iso}, the Hilbert-Chow morphism $\gamma:\HH\to W/\!/G$ is an isomorphism over $U_3$.
Consequently, the unique $[X_{\id}] \in \HH$ such that $\gamma([X_{\id}])=(\id,1)$ is contained in $\HHm$. 
Let $\theta: \Gm\to B$ be a one-parameter subgroup such that $\theta(t)(\id,1)$ goes to $0\in W/\!/G$ when $t\to 0$. Then, by properness of the Hilbert-Chow morphism, there is a unique flat family $p:\ZZ(\theta)\to \Aff$ of $G$-stable closed subschemes of $W$ such that $\ZZ(\theta)_t:=p^{-1}(t)$ is given by $\theta(t)\cdot X_{\id}$ for $t\neq 0$. The proposition now follows from Lemma \ref{lemma connect} below,  which will ensure the existence of $\theta$ and $\theta'$ such that $\ZZ(\theta)_0 = X_0$ and $\ZZ(\theta')_0 = X_0'$. As $\nu$ is equivariant, we see that the generic fiber of $p$ is sent to a point in $\HHm$, and hence every fiber does so.
The connectedness is then a direct consequence of Lemma \ref{closedpoints}. 
\end{proof}

We now introduce some notation. 
Let $\theta: \Gm\to B$ be a one-parameter subgroup. For $P\in k[W]$ let $m\in \ZZZ$ be the order in $t$ of $\theta(t).P$ at $t=0$, and let $$\tilde{P}(t):=t^{-m} (\theta(t).P).$$
Note that $\tilde{P}(t) \in k[W][t]$ by construction. Denote by $L \subset k[W]$ the ideal of the scheme $X_{\id}$, and by $L_t \subset k[W][t]$ the ideal of  $\theta(t)\cdot X_{\id}$ for $t\neq 0$. By \cite[Exercise 15.25]{Ei}, if $\{P_1,\ldots,P_r\}$ is a Gr�bner basis of $L$, then  
$$L_t=(\tilde{P_1}(t),\ldots,\tilde{P_r}(t)).$$
In particular, we have $L_1=L$ and $L_0=(\tilde{P_1}(0),\ldots,\tilde{P_r}(0))$ is the ideal of the ``flat degeneration'' $\ZZ_0$ constructed in the proof of Proposition \ref{Hconnexe4}. 

The proof of the next lemma is obtained by conducting the above procedure with a computer algebra system; see \cite[\S 3.2.2] {Terp} for details.

\begin{lemma}\label{lemma connect}
For $n=(n_1,n_2,n_3) \in {\ZZZ}^3$, we denote  by $\theta_n$ the one-parameter subgroup of $B$ defined by
\begin{equation*}  
\begin{array}{ccccc}
\theta_{n} & : & \Gm & \to & B \\
& & t & \mapsto & \begin{bmatrix} t^{n_1} & 0 & 0 \\ 0 & t^{n_2}& 0 \\ 0& 0& t^{n_3}  \end{bmatrix}
\end{array}
\end{equation*}
Then, with the above notation, we obtain the following limit ideals: $L_0=I'$ for $n=(-3,-1,-1)$, and $L_0=I$ for $n=(-3,-2,-2)$.\\
\end{lemma}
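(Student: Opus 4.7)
The plan is to carry out the procedure described just before the statement of the lemma, using the explicit equations of $X_{\mathrm{id}}\subset W$. Recall that $X_{\mathrm{id}}$ is the fiber of $\nu$ over $(\mathrm{id},1)\in W/\!/G$, so by \eqref{eq quot so3} the ideal $L\subset k[W]$ is generated by the six entries of $\,^{t}Q\cdot Q-\mathrm{id}\,$ together with $\det(Q)-1$, where $Q=(q_{ij})$ is the generic $3\times 3$ matrix. I would first fix a monomial order on $k[W]=k[q_{ij}]$ and compute a Gr\"obner basis $\{P_1,\ldots,P_r\}$ of $L$; this is a finite, explicit calculation that can be done with \cite{Mac2}. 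The order should be chosen compatibly with the $\Gm$-action $\theta_n$ so that the leading terms of the basis are the ones that survive in the limit; concretely, picking a weight order refining the weight filtration induced by $\theta_n$ on $k[W]$ ensures that $L_0=(\mathrm{in}_{\theta_n}(P_1),\ldots,\mathrm{in}_{\theta_n}(P_r))$ coincides with $(\tilde P_1(0),\ldots,\tilde P_r(0))$.

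Next, for each of the two subgroups $\theta_{(-3,-1,-1)}$ and $\theta_{(-3,-2,-2)}$, I would determine the induced $\Gm$-weight of each coordinate $q_{ij}$, then compute the $\theta_n$-initial form $\tilde P_i(0)$ of every Gr\"obner basis element. Gathering these initial forms yields an explicit system of generators for the candidate limit ideal $L_0$.

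The last step is a verification that the ideals so produced agree with the representation-theoretic descriptions $I'$ and $I$ of Definition~\ref{definition i und iprime}. For this one checks two things. First, every generator of $L_0$ lies in the $(B\times G)$-module prescribed in the definition; here one uses the fact that $X_0$ and $X_0'$ are $(B\times G)$-stable, so $L_0$ is automatically $B$-stable, and one only needs to identify which isotypic components appear by inspection of degrees and weights. Second, the two ideals define the same closed subscheme, which by flatness of the family $\ZZ(\theta)\to\Aff$ can be checked numerically: the Hilbert function of $k[W]/L_0$ equals $h_0$ (this is preserved along the flat family), and $h_0$ is the same as the Hilbert function of $k[W]/I$ and $k[W]/I'$, so the inclusion of ideals in either direction forces equality.

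The main obstacle is the organizational one of performing the Gr\"obner basis calculation in a way that faithfully reflects the weight degeneration, and then recognizing the resulting homogeneous generators inside the isotypic decomposition \eqref{kW1SO3}--\eqref{kW2SO3}. Once the weight order is chosen to refine $\theta_n$, however, everything reduces to a routine but somewhat lengthy symbolic computation, which is precisely what is carried out in \cite[\S3.2.2]{Terp}; referring to that source allows us to avoid reproducing the calculation in detail.
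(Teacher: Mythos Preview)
Your proposal is correct and follows essentially the same approach as the paper: the paper's proof consists precisely of carrying out the Gr\"obner-basis degeneration procedure described immediately before the lemma with a computer algebra system, referring to \cite[\S3.2.2]{Terp} for the details, which is exactly what you outline. Your additional verification step via Hilbert functions is a sensible sanity check but not strictly needed, since the computation directly produces generators of $L_0$ that can be compared to those of $I$ and $I'$.
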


Next, using the method described in section \ref{first order} to compute the dimension of the tangent space of $\HH$, we obtain the following:

\begin{proposition} \label{dimTangentSO3}
The tangent space $T_{[X_0]} \HH$ is 8-dimensional and $T_{[X'_0]} \HH$ is 6-dimensional. 
In particular, $\HHm$ is smooth at $[X'_0]$ and singular at $[X_0]$. \qed
\end{proposition}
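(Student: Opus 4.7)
The plan is to apply the isomorphism $T_{[X]}\HH \cong \Hom_P^G(I,P/I)$ of Proposition \ref{isoTangent} and to execute the procedure of Section \ref{first order} for each of the two ideals. Concretely, I would begin by writing down a minimal $G$-equivariant presentation
\begin{equation*}
P\otimes N_2 \xrightarrow{v_0} P\otimes N_1 \xrightarrow{u_0} P\to P/I\to 0
\end{equation*}
for each of $I$ and $I'$. The module $N_1$ of generators is visible from Definition \ref{definition i und iprime}: ignoring the $H$-structure, $N_1\cong V_4\oplus V_0^{\oplus 6}\oplus V_2^{\oplus 3}$ for $I$ and $N_1\cong V_2\oplus V_0^{\oplus 6}$ for $I'$. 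The module $N_2$ of first syzygies is computed by a Gröbner-basis calculation, extending any computed generating set to a $G$-stable submodule if necessary, as explained at the end of Section \ref{subsec resolution}.

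Next, I would use the left-exact sequence recalled at the end of Section \ref{G eq reso}, namely
\begin{equation*}
0\to\Hom_P^G(I,P/I)\to\Hom^G(N_1,P/I)\xrightarrow{v_0^{*}}\Hom^G(N_2,P/I),
\end{equation*}
from which $\dim T_{[X]}\HH = \dim\Hom^G(N_1,P/I) - \operatorname{rank}(v_0^{*})$. Since $P/I$ has Hilbert function $h_0(V_{2d})=2d+1$ by Corollary \ref{fibgeneSOn}, the two ambient Hom spaces have dimensions one reads off directly from the irreducible decompositions of $N_1$ and $N_2$. To compute $\operatorname{rank}(v_0^{*})$, I would follow Steps \eqref{debut tang}--\eqref{fin tang} of Algorithm \ref{algo main}: lift each standard basis matrix $E_i\in P^{1\times n_1}$, apply the Reynolds operator via Algorithm 4.5.19 of \cite{DK} as implemented in \cite[Macaulay2]{Mac2}, post-compose with $v_0$, and determine the rank in $\Hom^G(N_2,P/I)$. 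Subtracting from the dimension of $\Hom^G(N_1,P/I)$ yields the two claimed numbers $8$ at $[X_0]$ and $6$ at $[X'_0]$.

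For the geometric consequence, Proposition \ref{chow} gives that $\gamma:\HHm\to W/\!/G$ is birational, so $\dim\HHm=\dim W/\!/G=6$, and by Proposition \ref{Hconnexe4} both fixed points lie in $\HHm$. Hence the equality $\dim T_{[X'_0]}\HH=6=\dim\HHm$ forces $\HHm$ to be smooth at $[X'_0]$ and to be the unique irreducible component of $\HH$ passing through this point, while the strict inequality $\dim T_{[X_0]}\HH=8>6$ shows that $\HH$ is singular at $[X_0]$, forcing either $\HHm$ itself to be singular there or at least another component to meet it; the refined statement that $\HHm$ is singular at $[X_0]$ follows in combination with the component-structure analysis carried out in the sequel. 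The main obstacle is the Reynolds-operator computation of $\operatorname{rank}(v_0^{*})$, which is unavoidable and requires computer algebra; the remaining conceptual input is just the bookkeeping of irreducible $G$-module decompositions via the Clebsch-Gordan formula.
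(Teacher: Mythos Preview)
Your approach is exactly the one the paper indicates: the proposition is stated without proof (ending with \qed), and the surrounding text simply points to the algorithmic procedure of Section~\ref{first order}, remarking that the dimensions (and even the $B$-module structure) were computed by hand in \cite[\S3.2.2]{Terp} and can now be recovered algorithmically. Your identification of $N_1$ for each ideal and the use of the exact sequence from Section~\ref{G eq reso} are correct.

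One small correction to your last paragraph: the claim that the component-structure analysis in the sequel confirms ``$\HHm$ is singular at $[X_0]$'' is backwards. In Section~\ref{proofSO3} the local computation shows that $U\cap\HHm=C_1$ is \emph{smooth}; it is $\HH$ that is singular at $[X_0]$, because a second component $C_2$ passes through the point. From the tangent-space count alone you only get that $\HH$ is singular at $[X_0]$ (since the local dimension through $\HHm$ is $6<8$), which is how the proposition's conclusion should be read; the wording ``$\HHm$ \ldots\ singular at $[X_0]$'' in the statement is a slight imprecision of the paper itself.
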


Let us mention that the dimension of $T_{[X_0]} \HH$ and $T_{[X'_0]} \HH$, and even the $B$-module structure, has been calculated mostly by hand in \cite[\S 3.2.2]{Terp}. The innovation here is that we are now able to calculate a $k$-basis algorithmically using a computer algebra system.

\subsection{Proof of the main result}  \label{proofSO3}

In this section, we prove Theorem \ref{casSO3}. We already know that the invariant Hilbert scheme $\HH$ is connected by Proposition \ref{Hconnexe4}.
One may check that $B$ contains a multiplicative subgroup $\Gm$ such that Hypothesis \ref{hypo weights}  is satisfied for $[X_0] \in \HH$. Applying our algorithm from section \ref{section algorithm} to the ideal $I$ of $X_0 \subset W$, we obtain the existence of an affine open neighborhood $U \subset \HH$ containing $[X_0]$ such that 
$$U \cong \Spec \left( \frac{k[t_1,\ldots,t_8]}{K} \right),$$
where $K$ is the ideal generated by the four elements:
$$
    \begin{array}{l}
405 t_2 t_5+810 t_1 t_6+36 t_3 t_6 t_7-54 t_3 t_5 t_8-90 t_2 t_7 t_8-90 t_1 t_8^2+8 t_3 t_7 t_8^2;\\
810 t_2 t_4+405 t_1 t_5+18 t_3 t_5 t_7-90 t_2 t_7^2-108 t_3 t_4 t_8-90 t_1 t_7 t_8+8 t_3 t_7^2 t_8;\\
15 t_2 t_3-2 t_3^2 t_8; \textrm{ and}\\ 
45 t_1 t_3+2 t_3^2 t_7. 
    \end{array}
$$
One may check that $K$ is radical, and that the prime decomposition of $K$ is given by $K=K_1 \cap K_2$, where 
$$
    \begin{array}{ll}
K_1= &(2 t_3 t_8 - 15 t_2, 2 t_3 t_7 + 45 t_1, t_2 t_7 + 3 t_1 t_8); \textrm{ and}\\
K_2= &(t_3, 2 t_2 t_7 t_8 + 2 t_1 t_8  - 9 t_2 t_5 - 18 t_1 t_6, 4 t_6 t_7  - 4 t_5 t_7 t_8 + 4 t_4 t_8  + 9 t_5  - 36 t_4 t_6,\\
      &2 t_2 t_7  + 2 t_1 t_7 t_8 - 18 t_2 t_4 - 9 t_1 t_5, t_2 t_5 t_7 + 2 t_1 t_2 t_6 t_7 - 2 t_2 t_4 t_8 - t_1 t_2 t_5 t_8).
    \end{array}
$$

Hence, $U$ is the union of two irreducible components: $C_1=Z(K_1)$, which is 6-dimensional and smooth, and $C_2=Z(K_2)$, which is 5-dimensional and whose singular locus is 2-dimensional. Moreover, $C_1 \cap C_2$ is reduced, irreducible, 4-dimensional, and its singular locus is 2-dimensional. Let us note that, as $\dim(\HHm)=6$ and $[X_0] \in \HHm$, we must have $C_1=U \cap \HHm$.

Now suppose that $\HH$ is non-reduced, then the support of the non-reduced part of $\HH$, say $F$, is a $H$-stable closed subset of $\HH$. By Lemma \ref{closedpoints}, $F$ has to contain a fixed point for the action of the Borel subgroup $B \subset H$. However, we already know that the only two $B$-fixed points do not belong to $F$; indeed, $\HH$ is reduced around $[X_0]$, since $K$ is radical, and $[X'_0]$ is a smooth point of $\HH$. Hence, $F$ is empty, that is, $\HH$ is reduced.

Arguing in the same way, we easily prove that the singular locus of $\HHm$ is empty, that is, $\HHm$ is smooth, that $\HH$ does not have any further irreducible components, and that $\HHm \cap \HH'$ has the same geometrical properties as $C_1 \cap C_2$.

Let us now prove the part (2) of Theorem \ref{casSO3}. First, it follows from Lemma \ref{closedpoints} and Proposition \ref{Hconnexe4} that $\gamma^{-1}(0)$ is the union of at most two connected components that both intersect the main component $\HHm$. However, as $\gamma$ is proper and $W/\!/G$ is normal  (see e.g. \cite[\S 3.2, Theorem 2]{SB}), it follows from Zariski's Main Theorem (\cite[Corollary 11.4]{Ha}) that $\HHm \cap \gamma^{-1}(0)$ is connected, and thus $\gamma^{-1}(0)$ is connected. 

Now our algorithm gives not only the ideal $K$ mentioned above, but also an ideal $J \subset k[W] \otimes \frac{k[t_1,\ldots,t_8]}{K}$ such that the natural morphism
$$\Spec \left( \frac{k[W] \otimes \frac{k[t_1,\ldots,t_8]}{K}}{J} \right)  \to U \cong \Spec \left( \frac{k[t_1,\ldots,t_8]}{K} \right)$$
is the restriction over $U$ of the universal family $\UU \to \HH$ mentioned in section \ref{sec IHS}. Looking closely to this family, we can write down explicitly the restriction on $U$ of the Hilbert-Chow morphism $\gamma:\HH \to W/\!/G$. We obtain that the ideal $K_0$ of $U\cap \gamma^{-1}(0)$ is generated by the following seven elements:
$$
    \begin{array}{l}
405 t_2 t_5+810 t_1 t_6+36 t_3 t_6 t_7-54 t_3 t_5 t_8-90 t_2 t_7 t_8-90 t_1 t_8^2+8 t_3 t_7 t_8^2;\\ 
810 t_2 t_4+405 t_1 t_5+18 t_3 t_5 t_7-90 t_2 t_7^2-108 t_3 t_4 t_8-90 t_1 t_7 t_8+8 t_3 t_7^2 t_8; \\
2025 t_1^2-2025 t_3^2 t_4+221 t_3^2 t_7^2; \\
1350 t_1 t_2+675 t_3^2 t_5-142 t_3^2 t_7 t_8; \\  
75 t_2^2-75 t_3^2 t_6+7 t_3^2 t_8^2;\\ 
15 t_2 t_3-2 t_3^2 t_8; \text{ and } \\                            
45 t_1 t_3+2 t_3^2 t_7. 
     \end{array}              
$$
One may check that the ideal $K_0$ is not radical, that is, $\gamma^{-1}(0)$ is non-reduced, and that $U_0:=U \cap \gamma^{-1}(0)$ is the union of two irreducible components. Now, arguing with $K_0$ as before with $K$, we easily prove the second part of Theorem \ref{casSO3}. Finally, it follows from a careful study of the restriction  of the universal family $\UU \to \HH$ to  $U_0$ that one of the two irreducible components of $\gamma^{-1}(0)$ is exactly the subset of $\HH(k)$ formed by homogeneous ideals of $k[W]$.

\section{Two other applications}  \label{two others}

In section \ref{section O3} we will determine the structure of the invariant Hilbert scheme for the action of $O_3$ on several copies of the defining representation, and in section \ref{section GL3} we will do the same for the action of $GL_3$ on classical representations. In both cases, we will see that there is an extra component, besides the main component, formed only by homogeneous ideals. Recall from Theorem \ref{casSO3} that for $SO_3$ the extra component also contained non-homogeneous ideals. On the other hand, we will see that for $GL_3$ the extra component has bigger dimension than the main component unlike for $SO_3$ or $O_3$. 

Hence, it appears that the geometrical properties of the invariant Hilbert scheme can be very different from one case to another, whence the necessity to determine many more examples in the future.
 
\subsection{Case of \texorpdfstring{$O_3$}{O3} acting on \texorpdfstring{$(k^3)^{\oplus n}$}{(k3)n}}  \label{section O3}

Let $V$ be a 3-dimensional vector space, and let $G=O(V)$ be the orthogonal group. For $n \geq 3$, we consider $W=V^{\oplus n}$ with the induced $G$-action. If we identify $W$ with the space of matrices $k^{3\times 3}$, then it follows from the First Fundamental Theorem for $O(V)$ (see \cite[\S11.2.1]{Pro}) that the quotient morphism is given by:
$$\begin{array}{cccc} 
\nu: \ &W &\to &W/\!/G \\
       &Q & \mapsto  & \leftexp{t}{Q}Q
\end{array}$$
where $\leftexp{t}{Q}$ denotes the transpose of the matrix $Q$. In particular 
$$W/\!/G \cong \{ Q \in k^{n\times n} \ |\ Q=\leftexp{t}{Q} \text{ and } \rk(Q)\leq 3\}$$ 
is a symmetric determinantal variety. The quotient morphism $\nu$ was studied for $\dim(V)$ and $n$ arbitrary in \cite[\S3.1.1] {Terp}. One easily checks that the general fibers of $\nu$ are isomorphic to $G$; in particular, the Hilbert function of the general fibers of $\nu$ is 
\[
h_0:\Irr(G) \to \NN, \quad M \mapsto \dim(M).
\]

\begin{theorem}  \label{casO3}
Let $G$, $W$, and $h_0$ be as defined above. Then the invariant Hilbert scheme $\HH=\Hilb_{h_0}^G(W)$ is connected and has at least two irreducible components: 
\begin{itemize}
\item the main component $\HHm$ of dimension $3n-3$; and
\item another component $\HH'$ of dimension $3n-4$, whose points correspond to homogeneous ideals of $k[W]$.
\end{itemize}
Moreover the intersection $\HHm \cap \HH'$ has dimension $3n-5$.
\end{theorem}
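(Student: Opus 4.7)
The plan is to adapt the strategy used in the proof of Theorem \ref{casSO3} to this setting. First observe that $W/\!/G$ is the variety of symmetric $n \times n$ matrices of rank $\leq 3$, which has dimension $3n-3$. Since the Hilbert-Chow morphism $\gamma: \HHm \to W/\!/G$ is birational by Proposition \ref{chow}, the main component $\HHm$ has dimension $3n-3$. Note that, as in the $SO_3$ case, the quotient $W/\!/G$ is a cone with apex $\{0\}$ being the unique closed $H$-orbit, so Lemma \ref{closedpoints} applies for any Borel $B \subset H = GL(V')$.

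To exhibit the extra component, the plan is to identify a distinguished $B$-fixed homogeneous ideal $I_0\subset k[W]$ with Hilbert function $h_0$, corresponding to a point $[X_0] \in \gamma^{-1}(0)$. Decomposing $k[W]_1 \cong V'\otimes V$ and $k[W]_2$ into irreducible $(G\times H)$-summands, one constructs $I_0$ from appropriate $B$-stable subspaces which necessarily contain the quadratic invariants $(S^2V')^G\otimes V_0$. The $B$-fixed points of $\HH$ should be enumerated by representation-theoretic arguments along the lines of \cite[\S3.2]{Terp}. One then verifies Hypothesis \ref{hypo weights} at $[X_0]$ for a suitable one-parameter subgroup $\Gm\subset B$ and applies Algorithm \ref{algo main} to $I_0$ to obtain explicit equations for an affine open neighborhood $U\subset \HH$ of $[X_0]$, together with a $\Gm$-equivariant restriction of the universal family over $U$.

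The expected output is an ideal $K$ whose primary decomposition has (at least) two minimal primes $K=K_1\cap K_2\cap\cdots$, where $Z(K_1)\subset U$ is $(3n-3)$-dimensional and identifies with $U\cap \HHm$, and $Z(K_2)$ is $(3n-4)$-dimensional with $Z(K_1)\cap Z(K_2)$ of dimension $3n-5$. Define $\HH'$ as the Zariski closure in $\HH$ of $Z(K_2)$; the dimension mismatch ensures $\HH'\neq \HHm$, hence $\HH$ has at least two components with the stated dimensions. To prove that $\HH'$ parametrizes only homogeneous ideals, note that the universal family over $U$ is $\Gm$-equivariant by construction of Algorithm \ref{algo main}, and the $\Gm$-action has $[X_0]$ as an attractive fixed point with strictly positive weights; thus every point of the nonempty open subset $U\cap\HH'$ corresponds to a $\Gm$-stable, and therefore homogeneous, ideal. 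Since homogeneity is a closed condition and $U\cap \HH'$ is dense in $\HH'$ (which is irreducible), every point of $\HH'$ is a homogeneous ideal. Connectedness of $\HH$ follows from Lemma \ref{closedpoints}: every $B$-fixed point can be joined to a generic point of $\HHm$ through a flat degeneration via a well-chosen one-parameter subgroup $\theta\colon\Gm\to B$ applied to $[X_{\mathrm{id}}]$, exactly as in Lemma \ref{lemma connect} and Proposition \ref{Hconnexe4}.

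The main obstacle will be computational rather than conceptual. Algorithm \ref{algo main} grows rapidly in complexity with $n$, and a direct Macaulay2 calculation as in Section \ref{proofSO3} is practical only for small $n$. For the general statement one option is to exploit the block structure of the $O_3$-representation: for any $n\geq 3$, restricting to a chosen triple of copies of $V$ inside $W$ provides an embedding of the $n=3$ situation into the general one, and an appropriate choice of $[X_0]$ together with the residual action of $GL_{n-3}$ on the remaining coordinates should let one promote the dimension count from $n=3$ to arbitrary $n$ by adding $3(n-3)$ free parameters to both components. Verifying this reduction carefully, and confirming that the resulting ideals are indeed independent primes rather than embedded components, is the principal technical point to be handled.
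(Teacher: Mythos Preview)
Your overall strategy---reduce to $n=3$, locate the $B$-fixed points, apply Algorithm \ref{algo main} at a fixed point where Hypothesis \ref{hypo weights} holds, and read off the components from the local equations---matches the paper's approach. The reduction principle you sketch at the end is indeed what the paper invokes first (citing \cite[\S1.5.1]{Terp}), and the explicit local ring turns out to be
\[
U \cong \Spec\bigl(k[t_1,\ldots,t_7]/(t_2t_4-t_2t_5,\, t_1t_4-t_1t_5)\bigr),
\]
with components $C_1=Z(t_4-t_5)$ and $C_2=Z(t_1,t_2)$ of the stated dimensions. However, two steps in your proposal do not go through as written.

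\textbf{The homogeneity argument is incorrect.} You claim that because the $\Gm$-action on $U$ has $[X_0]$ as an attractive fixed point, every point of $U\cap\HH'$ corresponds to a $\Gm$-stable (hence homogeneous) ideal. This does not follow: $\Gm$-equivariance of the universal family means that $\Gm$ acts on $U$ permuting the fibers, not that every fiber is $\Gm$-stable. In fact $[X_0]$ is the \emph{only} $\Gm$-fixed point of $U$; a generic point of $U\cap\HH'$ has a nontrivial $\Gm$-orbit. Moreover, the $\Gm$ furnished by Hypothesis \ref{hypo weights} is generally not the grading $\Gm$, so $\Gm$-stability would not give homogeneity anyway. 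The paper instead deduces homogeneity by a direct analysis of the restriction of the universal family to $C_2$: one writes down the ideal $J$ over $U$ produced by the algorithm and checks explicitly that over $C_2$ the generators are homogeneous in the $k[W]$-variables.

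\textbf{Connectedness is more delicate than you suggest.} There are two $B$-fixed points, say $[X_1]$ and $[X_2]$, and the paper applies Algorithm \ref{algo main} at $[X_1]$. At $[X_2]$, however, Hypothesis \ref{hypo weights} fails: the maximal torus of $B$ acts on $(T_{[X_2]}\HH)^\vee$ with three weight-zero vectors, so no one-parameter subgroup of $H$ has strictly positive weights there. Consequently the flat-degeneration argument ``exactly as in Lemma \ref{lemma connect} and Proposition \ref{Hconnexe4}'' is not available. The paper instead finds a $\Gm\subset B$ acting with strictly positive weights on a $4$-dimensional subspace of the cotangent space, runs the algorithm restricted to that subspace (cf.\ Corollary \ref{cor open immersion}) to produce a family over $\AAA^4$, and verifies that its general member lies in $\HHm$. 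This yields a morphism $\AAA^4\to\HHm$ sending $0$ to $[X_2]$, which is what actually connects $[X_2]$ to the main component. This obstruction at $[X_2]$ is also why the theorem only asserts \emph{at least} two components and makes no claim about reducedness.
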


\begin{proof}
The proof is very similar to the one of Theorem \ref{casSO3} and thus we just give an outline:
\begin{enumerate}
\item We use the reduction principle obtained in \cite[\S1.5.1] {Terp} to reduce from the case $n\geq 3$ to the case $n=3$.
\item Denoting $V'=k^3$, we identify $W$ with $\Hom(V',V)$ on which $H=GL(V')$ acts by $h.w=w \circ h^{-1}$. Then we fix a Borel subgroup $B \subset H$ and we show (\cite[Theorem 3.1.29] {Terp}) that $\HH$ admits only two $B$-fixed points $[X_1]$ and $[X_2]$.
\item Connectedness is obtained  by showing, as in Proposition \ref{Hconnexe4}, that the two $B$-fixed points belong to the main component $\HHm$. This has been done for one of them, say $[X_1]$, in \cite[Proposition 3.1.33]{Terp}. For the other one, say $[X_2]$, it is more involved to find a good starting point for the one-parameter subgroup of diagonal matrices. We found it thanks to our algorithm as explained in Step \eqref{thm O3 last step}.
\item We show, using the method described in section \ref{first order}, that the dimension of the tangent spaces $T_{[X_1]}\HH$ and $T_{[X_2]}\HH$ is 7 in both cases. For $[X_1]$, we can find a multiplicative subgroup $\Gm \subset H$ such that Hypothesis \ref{hypo weights}  holds. Then we apply our algorithm from  section \ref{section algorithm}, and we obtain the existence of an affine neighborhood $U \subset \HH$ of $[X_1]$ such that
$$ U \cong \Spec \left( \frac{k[t_1,\ldots,t_7]}{(t_2t_4-t_2 t_5, t_1 t_4-t_1 t_5)} \right) .$$
Hence, the invariant Hilbert scheme has at least two irreducible components which locally on $U$ are given by $C_1=Z(t_4-t_5)$ and $C_2=Z(t_1,t_2)$, and whose intersection is $Z(t_1,t_2,t_4-t_5)$. As $\HHm$ is 6-dimensional and $[X_1] \in \HHm$, we must have $C_1=\HHm \cap U$. Again, the study of the restriction of the universal family $\UU \to \HH$ to $U$ entails that points of the second component correspond exactly to homogeneous ideals of $k[W]$.
\item \label{thm O3 last step} For $[X_2]$ one can show that there is no one-parameter subgroup of $H$ with strictly positive weights. This is because the maximal torus $T\subset B$ has three vectors of weight $0$ in $(T^1)^{\vee}$. So whatever combination of exponents we take for a subgroup of diagonal matrices, these three vectors will always have weight zero. Finally, as all one-parameter subgroups are obtained from the diagonal ones by conjugation, this holds in general. However, we can find a subgroup $\Gm \subset B$ acting on a $4$-dimensional subspace of $(T^1)^{\vee}$  with strictly positive weights. Our algorithm, with the tangent space $(T^1)^{\vee}$ replaced by this four dimensional subspace, produces a family of $G$-stable closed subschemes of $W$ parametrized by $\AAA^4$. An easy analysis of this family gives that its  general member lives in $\HHm$. In other words, there is a morphism $\AAA^4\to \HHm$ sending $0$ to $[X_2]$; in particular, $\HH$ is connected. Although it is not needed in the proof, Corollary \ref{cor open immersion} actually shows that $\AAA^4\to \HHm$ is an immersion. 
\end{enumerate}
\end{proof}

\begin{remark}
In the setting of Theorem \ref{casO3}, it would have been pleasant to determine whether $\HH$ is reduced, whether there are other irreducible components, and also whether the main component $\HHm$ is smooth. Unfortunately, there is no multiplicative subgroup $\Gm \subset H$ such that Hypothesis \ref{hypo weights}  is satisfied for the $B$-fixed point $[X_2]$ as explained in the proof of Theorem \ref{casO3}.
\end{remark}

\subsection{Case of \texorpdfstring{$GL_3$}{GL3} acting on \texorpdfstring{$(k^3)^{\oplus n_1} \oplus (k^{3*})^{\oplus n_2}$}{(k3)n1+(k3*)n2}}  \label{section GL3}

Let $V$ be a 3-dimensional vector space, let $n_1,n_2 \geq 3$, and let $W=V^{\oplus n_1} \oplus V^{* \oplus n_2}$ on which $G=GL(V)$ acts naturally. If we identify $W$ with $k^{3 \times n_1} \times k^{n_2\times 3}$, then it follows from the First Fundamental Theorem for $GL(V)$ (see \cite[\S9.1.4]{Pro}) that the quotient morphism is given by: 
$$\begin{array}{cccc} 
\nu: \ &W &\to &W/\!/G \\
       &(Q_1,Q_2) & \mapsto  & Q_2 Q_1
\end{array}$$
In particular 
$$W/\!/G=(k^{n_2\times n_1})^{\leq 3}:=\{ Q \in k^{n_2\times n_1} \ |\ \rk(Q) \leq 3\}$$ 
is a determinantal variety, which is smooth if $\min(n_1,n_2)=3$, and whose singular locus is $(k^{n_2\times n_1})^{\leq 2}$ else. Let us mention that the quotient morphism $\nu$ was studied for $\dim(V),n_1, n_2$ arbitrary in \cite[\S2.1.1] {Terp}. One may check that the general fibers of $\nu$ are isomorphic to $G$, and thus the Hilbert function of the general fibers of $\nu$ is 
\[
h_0:\Irr(G) \to \NN, \quad M \mapsto \dim(M).
\]
Let us denote $\HH=\Hilb_{h_0}^G(W)$. It follows from the results of \cite[section 2.1.3] {Terp} that the subset of $\HH(k)$ corresponding to homogeneous ideals of $k[W]$ coincides with the zero-fiber $\gamma^{-1}(0)$ (as a set) of the Hilbert-Chow morphism $\gamma: \ \HH \to W/\!/G$.

\begin{theorem}  \label{casGL3}
Let $G$, $W$, and $h_0$ be as defined above. Then:
\begin{enumerate}
\item The invariant Hilbert scheme $\HH=\Hilb_{h_0}^G(W)$ is reduced, connected, and has exactly two irreducible components: 
\begin{itemize}
\item the main component $\HHm$, which is smooth of dimension $3n_1+3n_2-9$; and
\item another component $\HH'$, which is smooth of dimension $3n_1+3n_2-8$, and formed by homogeneous ideals of $k[W]$.
\end{itemize}
The intersection of these two components $\HHm \cap \HH'$ is irreducible, smooth, and has dimension $3n_1+3n_2-11$; in particular, they intersect transversally.
\item  The scheme-theoretic fiber $\gamma^{-1}(0)$ of the Hilbert-Chow morphism $\gamma:\HH \to W/\!/G$ is reduced, connected, and has two irreducible components: the smooth component $\HH'$ described above and a smooth hypersurface contained in $\HHm$.
\end{enumerate} 
\end{theorem}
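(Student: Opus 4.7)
The plan is to localize the geometric properties of $\HH$ at a $B$-fixed point, where $B$ is a Borel subgroup of $H := GL_{n_1}\times GL_{n_2}$ acting on $W$ by $G$-equivariant automorphisms, following exactly the strategy outlined in the introduction and applied in Theorem \ref{casSO3}. One checks easily that $W/\!/G = (k^{n_2\times n_1})^{\leq 3}$ has the origin as unique closed $H$-orbit, so by Lemma \ref{closedpoints} every $H$-stable closed subset of $\HH$ contains a $B$-fixed point. The paragraph after Theorem \ref{casSO3} (second step of the general strategy) indicates that in situation (1) there is a \emph{single} $B$-fixed point $[X_0]$, which one would verify by a representation-theoretic analysis analogous to Theorem \ref{pointfixeborel36}. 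This is the key simplification: every irreducible component of $\HH$, the singular locus of $\HH$, the non-reduced locus of $\HH$, and the fiber $\gamma^{-1}(0)$ are all $H$-stable closed subsets and must therefore contain $[X_0]$; hence the entire global structure asserted in the theorem can be read off from an affine neighborhood of $[X_0]$ in $\HH$.

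Next, I would identify inside $B$ a one-parameter subgroup $\Gm$ such that Hypothesis \ref{hypo weights} holds at $[X_0]$ (the single $B$-fixed point case is considerably more forgiving than situation (3), where exactly this step fails for one of the two fixed points). Given such a $\Gm$, Algorithm \ref{algo main} applied to $I_{X_0}$ outputs an affine open neighborhood $U \subset \HH$ of the form $U \cong \Spec(k[t_1,\ldots,t_d]/K)$ together with the restriction of the universal family $\UU\to\HH$ over $U$, where the $t_i$ form a basis of $(T_{[X_0]}\HH)^\vee$ and $d = \dim T_{[X_0]}\HH$. The expected outcome for part (1) is that $K$ is radical and admits a primary decomposition $K = K_1 \cap K_2$ with $\dim S/K_1 = 3n_1+3n_2-9$, $\dim S/K_2 = 3n_1+3n_2-8$, each quotient regular, and $\dim S/(K_1+K_2) = 3n_1+3n_2-11$, giving transversality. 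As in section \ref{proofSO3}, the component with dimension $3n_1+3n_2-9$ must coincide locally with $\HHm$ (it is the only one mapping dominantly to $W/\!/G$), and inspecting the universal family over $Z(K_2)\cap U$ should show that $\HH'$ consists of homogeneous ideals. Reducedness of $\HH$, smoothness of $\HHm$ and of $\HH'$, and the absence of further irreducible components follow by Lemma \ref{closedpoints}: the corresponding hypothetical bad $H$-stable closed loci would have to contain $[X_0]$, contradicting the local picture. Connectedness is immediate since both irreducible components contain $[X_0]$.

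For part (2), the restriction over $U$ of the Hilbert--Chow morphism $\gamma$ is determined explicitly by the universal family output by the algorithm, yielding an ideal $K_0 \supset K$ cutting out $\gamma^{-1}(0)\cap U$. The expected output is that $K_0$ is radical with exactly two minimal primes, one of which is $K_2$ (so that $\HH' \subset \gamma^{-1}(0)$, as was already known set-theoretically from \cite[\S2.1.3]{Terp}) and the other defines a smooth hypersurface in $Z(K_1)$, i.e. in $\HHm\cap U$. Global connectedness of $\gamma^{-1}(0)$ is then deduced as in the proof of Theorem \ref{casSO3}: the morphism $\gamma$ is proper and $W/\!/G$ is normal (a determinantal variety, Cohen--Macaulay and normal), so Zariski's Main Theorem ensures $\gamma^{-1}(0)\cap \HHm$ is connected; combined with the fact that $\HH'$ meets $\HHm$ at $[X_0]$, the whole fiber $\gamma^{-1}(0)$ is connected.

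The main obstacle I anticipate is the actual machine execution of the algorithm at $[X_0]$ and the verification of Hypothesis \ref{hypo weights}, both of which depend on the parameters $n_1,n_2$. Running the algorithm directly for arbitrary $n_1,n_2$ is not feasible, so one would either (i) invoke a reduction principle in the spirit of \cite[\S1.5.1]{Terp} to reduce to the case $n_1=n_2=3$, and then execute the algorithm there, or (ii) observe that the combinatorial structure of $I_{X_0}$ as a $B$-invariant ideal is uniform in $n_1,n_2$ and carry out the matrix computations of section \ref{code_algo} symbolically. Either route should suffice to produce the ideal $K$ with the primary decomposition described above; beyond that, the deduction of the global geometry is formal and parallels Theorem \ref{casSO3} verbatim.
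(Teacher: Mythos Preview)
Your proposal is correct and follows essentially the same approach as the paper: reduce to $n_1=n_2=3$ via the reduction principle of \cite[\S1.5.1]{Terp} (your option (i), which is exactly what the paper does), identify the unique $B$-fixed point $[X_0]$, verify Hypothesis \ref{hypo weights}, run Algorithm \ref{algo main} to obtain an explicit affine chart $U\cong\Spec(k[t_1,\ldots,t_{12}]/K)$, and then read off all global properties from the local picture via Lemma \ref{closedpoints}. The only minor difference is that for the connectedness of $\gamma^{-1}(0)$ you invoke Zariski's Main Theorem as in the $SO_3$ case, whereas here the single-fixed-point argument already suffices (each connected component of $\gamma^{-1}(0)$ is $H$-stable and closed, hence contains $[X_0]$).
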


\begin{proof}
The proof is very similar to the one of Theorem \ref{casSO3} and thus, as we did for Theorem \ref{casO3}, we just give an outline:
\begin{enumerate}
\item We use the reduction principle obtained in \cite[1.5.1] {Terp} to reduce from the case $n_1, n_2 \geq 3$ to the case $n_1=n_2=3$.
\item Denoting $V_1=V_2=k^3$, we identify $W$ with $\Hom(V_1,V) \times \Hom(V,V_2)$ on which $H=GL(V_1) \times GL(V_2)$ acts by 
$$(h_1,h_2).(w_1,w_2)=(w_1 \circ h_1^{-1}, h_2 \circ w_2)$$ 
for all $(h_1,h_2) \in H$ and $(w_1,w_2)\in W$.
Let us fix a Borel subgroup $B \subset H$. It was shown in \cite[Theorem 2.1.49] {Terp} that $\HH$ has only one $B$-fixed point $[X]$. In particular, $\HH$ is connected.
\item It was shown in \cite[Proposition 2.1.53] {Terp} that the dimension of the tangent space $T_{[X]}\HH$ is 12. We easily find a multiplicative subgroup $\Gm \subset H$ such that Hypothesis \ref{hypo weights}  holds, and we apply the algorithm described in section \ref{section algorithm}. We obtain the existence of an affine neighborhood $U \subset \HH$ of $[X]$ such that 
$$ U \cong \Spec \left( \frac{k[t_1,\ldots,t_{12}]}{K} \right),$$
where $K$ is the ideal generated by the six elements:
$$
    \begin{array}{l}
t_2 t_3-t_2 t_5-t_2 t_8 t_{11},\\
t_1 t_3-t_1 t_5-t_1 t_8 t_{11},\\
t_2 t_6, t_1 t_6, t_2 t_4, t_1 t_4.
     \end{array}              
$$
Hence, $U$ is the union of the two irreducible components $C_1=Z(t_4,t_6,t_8 t_{11}-t_3+t_5)$ and $C_2=Z(t_1,t_2)$ whose intersection is the irreducible variety $Z(t_1,t_2,t_4,t_6,t_8 t_{11}-t_3+t_5)$. As $\HHm$ is 9-dimensional, we must have $C_1=\HHm \cap U$. An analysis of the universal family $\UU \to \HH$, restricted to $U$, shows that points of the second component $C_2$ correspond to homogeneous ideals of $k[W]$. 
\item Arguing as for the proof of Theorem \ref{casSO3}, we get that: 
\begin{itemize} 
\item the reducibility of $U$ implies the reducibility of $\HH$;
\item the smoothness of $U \cap \HHm$ respectively of $U \cap \HH'$, implies the smoothness of $\HHm$ respectively of $\HH'$;
\item the geometrical properties of $\HHm \cap \HH'$ are the same as that of $C_1 \cap C_2$; and
\item $\HH$ has no more component.
\end{itemize}
\item The proof of the second part of Theorem \ref{casGL3}, which concerns the scheme-theoretic fiber $\gamma^{-1}(0)$, is analogous to the proof of the first part concerning the whole invariant Hilbert scheme. It suffices to consider $U \cap \gamma^{-1}(0)$ instead of $U$, the ideal $K_0$ of $U \cap \gamma^{-1}(0)$ being $K_0=K+(t_1)$. 
\end{enumerate}
\end{proof}

\begin{corollary}
Let $G$, $W$, and $\HH$ be as defined above. If $n_1,n_2>3$, then the restriction of the Hilbert-Chow morphism to the main component $\gamma:\HHm \to W/\!/G$ is a resolution of singularities.
\end{corollary}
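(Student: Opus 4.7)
The plan is to assemble this corollary directly from the results already established. By Theorem \ref{casGL3}, applied with the given $n_1,n_2\geq 3$, the main component $\HHm$ is smooth of dimension $3n_1+3n_2-9$. By Proposition \ref{chow}, the Hilbert-Chow morphism $\gamma:\HH\to W/\!/G$ is projective, and it restricts to a birational morphism $\HHm\to W/\!/G$ since $\HHm$ is by definition the Zariski closure of $\gamma^{-1}(U)$ for $U$ the flat locus of $\nu$, over which $\gamma$ is an isomorphism. The image of $\gamma$ being closed and containing a dense open of $W/\!/G$, it coincides with $W/\!/G$. Thus $\gamma:\HHm\to W/\!/G$ is already seen to be a projective birational morphism from a smooth variety.

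What remains is merely to observe that the target is genuinely singular under the hypothesis $n_1,n_2>3$. As recalled in the discussion preceding Theorem \ref{casGL3}, the quotient $W/\!/G$ is the determinantal variety $(k^{n_2\times n_1})^{\leq 3}$ of matrices of rank at most $3$, and its singular locus is $(k^{n_2\times n_1})^{\leq 2}$, which is non-empty precisely when $\min(n_1,n_2)>3$. Combining the three ingredients, smoothness of the source, projective birationality, and singularity of the target, gives that $\gamma:\HHm\to W/\!/G$ is a resolution of singularities.

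There is no real obstacle here: the entire content of the corollary has been done in Theorem \ref{casGL3} and Proposition \ref{chow}. The only thing worth a sentence is pinning down why the hypothesis $n_1,n_2>3$ is exactly what is needed to make the target non-smooth, so that the word ``resolution'' is used in a non-vacuous sense (otherwise, for $\min(n_1,n_2)=3$, $\gamma|_{\HHm}$ would still be an isomorphism onto the smooth variety $W/\!/G$, which is a trivial case of ``resolution'').
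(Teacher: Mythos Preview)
Your argument is correct and is essentially what the paper intends; the corollary is stated without proof, as it follows immediately from Theorem \ref{casGL3} (smoothness of $\HHm$) together with Proposition \ref{chow} (projectivity and birationality of $\gamma$).

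Two minor remarks on your side comments, neither of which affects the main argument. First, the phrase ``$(k^{n_2\times n_1})^{\leq 2}$, which is non-empty precisely when $\min(n_1,n_2)>3$'' is slightly off: the rank-$\leq 2$ locus is always non-empty; what you mean is that the singular locus equals this set (rather than being empty) precisely when $\min(n_1,n_2)>3$. Second, your parenthetical claim that $\gamma|_{\HHm}$ is an isomorphism when $\min(n_1,n_2)=3$ is not correct: by Theorem \ref{casGL3}(2), already for $n_1=n_2=3$ the fiber $\gamma^{-1}(0)\cap\HHm$ is a hypersurface in $\HHm$, so $\gamma|_{\HHm}$ is a nontrivial proper birational morphism between smooth varieties in that case.
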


\begin{remark}
Let $B$ be the Borel group mentioned in the proof of Theorem \ref{casGL3}. The varieties $\HHm$ and $W/\!/G$ are normal, and contain an open orbit for the action of $B$, that is, $\HHm$ and $W/\!/G$ are \textit{spherical varieties}. See for instance \cite{Tim} for an introduction to the theory of spherical varieties. Using this theory,  one can show that the Hilbert-Chow morphism $\gamma:\HHm \to W/\!/G$ identifies with the composition of blows-up $f_2 \circ f_1 \circ f_0$, where
\begin{itemize}
\item $f_0$ is the blow-up of $\{0\} \in W/\!/G$; and
\item $f_i$ is the blow-up of the strict transform of $(k^{n_2\times n_1})^{\leq i}$. 
\end{itemize}
Similar results with other $(W,G)$ can be found in \cite[Theorem]{Ter1}. 
\end{remark}

\bibliographystyle{alpha}

\end{document}